\newtheorem{theorem}{Theorem}[section]
\newtheorem{lemma}[theorem]{Lemma}
\newtheorem{proposition}[theorem]{Proposition}
\theoremstyle{definition}
\newtheorem{corollary}[theorem]{Corollary}
\theoremstyle{remark}
\numberwithin{equation}{section}
\def \R{{\bf R}}
\def \N{{\bf N}}
\def \P{{\bf P}}
\def\E{{\bf E}}
\def \C{{\bf C}}
\def \i{{\infty}}
\def\z{{\zeta}}
\def\l{{\lambda}}
\newcommand {\cc}{\check}
\begin{document}

\title{Explicit solutions for the exit problem for a class of L\'evy processes. Applications to the pricing of double barrier options}

%    Information for first author
\author{Sonia FOURATI}
%    Address of record for the research reported here
\address{INSA de ROUEN,LMI  et LPMA de Paris VI et VII}
%    Current address
\curraddr{ 76801 Saint Etienne du Rouvray, France
France}
\email{sonia.fourati@upmc.fr}
%    \thanks will become a 1st page footnote.

%    General info
\subjclass[2000]{Primary 60G51}

\date{February 2010}

\keywords{L\'evy processes, Fluctuation theory, Wiener-Hopf factorization, Exit problems, Options pricing, Inverse problems, Bargmann equations}

\begin{abstract}
Lewis  and Mordecki  have computed  the Wiener-Hopf factorization of a L\'evy process  whose restriction on $]0,+\i[$ of their   L\'evy measure has a rational Laplace transform. That allows to compute  the  distribution of $\displaystyle  (X_t,\inf_{0\leq s\leq t}X_s)$. For the same class of L\'evy processes, we compute the distribution of 
$\displaystyle  (X_t,\inf_{0\leq s\leq t}X_s,\sup_{0\leq s\leq t} X_s)$ and also the behavior of this triple at certain stopping time, like the first exit time of an interval containing the origin. Some applications to the pricing of  double barrier options with or without rebate are evocated. 
\end{abstract}

\maketitle

\section{Introduction} There are very few  examples of L\'evy process for which the so-called "exit problem" can be explicitly solved (see \cite  {KK05} , \cite {R90}, \cite  {R72}). We present here this  explicit solution for a class of  L\'evy processes which has been introduced by Lewis A.L. and 
Mordecki E.\cite {LM08}; that is the class  of  the  L\'evy processes whose restriction on $]0,+\i[$ of their   L\'evy measure has a rational Laplace transform. This happens when this restriction is a finite linear combination of exponential or gamma distributions. 

Lewis A.L and Mordecki E. \cite  {LM08} (see also Asmussen, S. Avram, F. and Pistorius M.R. \cite {AAP04})  have computed  the so-called Wiener-Hopf factorization of these  L\'evy process. That brings to closed forms for  the distribution of the maximum of the process before an independent exponential time and for  the joint distribution of $(T^x, X_{T x})$  where $T^x$ is the first time where the L\'evy process $X$ crosses upward  a level $x$ and $X_{T^x}$  its position at that time. 

As an application, when adopting the exponential L\'evy model, $Y_t=Y_0.e^{X_t}$,  for a financial asset $Y_t$  with $X_t$ of the preceding form (see for example \cite {CT03}),  an immediate consequence   of the preceding results  is  the computation of the (temporal Laplace transform)  price of the double barrier option with this underlying asset. 
That is, an  european option which  is activated (in) or  desactivated (out) when the asset $(Y_t)_{t\in [0,T]}$ cross up (down)  a barrier $H$ before the time of maturity  $T$. This application follows the computation of the price of the simple barrier option, by   the Wiener-Hopf factorization (see \cite {AAP04}).

In this paper, for the same class of  L\'evy processes, we solve the exit problem. More precisely, we give closed form of the joint distribution of the minimum and  maximum of the process before an independent exponential time and among other behaviors of the process at certain stopping times, we give the Laplace transform of the joint distribution of $(T_a^b, X_{T_a^b})$ where  $T_a^b$ is the first  time that the process  leaves  a bounded interval $[-a,b]$ containing the origin  and $X_{T_a^b}$ is  its  position at that time.

As an application, we mention how one can deduce the price of the double barrier option, in or out, that means that the option is activated or desactived if the asset crosses up a barrier $b$ or down a barrier $a$ in the exponential L\'evy model. 

The paper is organized as follow : In section 2, we recall the general results on Wiener-Hopf factorization, we give a shortened proof of the results  of Lewis-Mordecki and settle few other preliminary results. For that, we use  exclusively elementary complex analysis arguments. 
After introducing few more  notations in section 3, we give in section 4, 5, 6, 7 and 8,  all the results on the fluctuations of our  class of L\'evy processes. In section 9, we recall the main result of \cite{F10} which is the main tool of this work. The proofs follow in section 10, 11, 12, 13.

These results, in the symmetric case, are related to what is called "Bargmann equations"  in the litterature on inverse problems of the spectral theory (see for example \cite{F63}).

\section{The assumption and the Wiener-Hopf factorization}

We suppose that $X$ is a real L\'evy process possibly killed at an independent exponantial time and  we denote $\z$ the life time of $X$.

Let  $\phi$ be the  L\'evy exponent of $X$, so that  the identity 
$$\E(e^{-iu X_t}1_{t<\z})=e^{-t\phi(iu)}$$ 
is fulfilled for every time $t$ and every  imaginary number $iu \in i\R$, $\phi$ is continuous on $i\R$ and  $\phi(0)$ is the rate of the exponential distribution of the life time $\z$, 
$$\phi(0)=0\quad \hbox{  if and only if  $\z=+\i$ a.s. ("$X$ does not die")}. $$

We now work under Lewis-Mordecki's assumption. It is based on the following rather obvious fact. 

\begin{proposition} \label{Ass} The conditions below  are equivalent 

$(i)$ The L\'evy measure  of $X$, $\pi$, is of the following form 

 $$\pi (dy)=\sum_{j=1}^n c_j {y^{n_j}\over n_j!} e^{-\gamma_j y}dy \qquad \hbox{on}\quad  ]0,+\i[$$
 
 $$n\in \N, n_j\in \N ,  \Re(\gamma_j)>0,  c_j\in \C \qquad (j=1,\dots , n)  $$
 
 $(ii)$ The exponent $\phi$ is of the form $$\phi(iu)=\phi^-(iu)+{P\over Q}(iu),$$ where $\phi^-$ is the
 exponent of a L\'evy process without positive jumps,  
$P$ and $Q$ are polynomials and  $Q$ has   all its roots on the complex half plane $\{\l; \Re(\l)<0\}$.

\end{proposition}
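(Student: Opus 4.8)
The statement asserts the equivalence of two descriptions of the positive-jump structure: a concrete one ($\pi$ restricted to $]0,+\infty[$ is a finite combination of densities $y^{n_j}e^{-\gamma_j y}$) and an analytic one ($\phi$ splits as a spectrally-negative exponent plus a rational function whose denominator has only left-half-plane roots). The natural route is to pass through the L\'evy--Khintchine formula and the Laplace transform of the positive part of $\pi$. Writing $\pi = \pi^- + \pi^+$ with $\pi^+$ supported on $]0,+\infty[$, and recalling that the contribution of $\pi^-$ together with the Gaussian and drift terms is exactly a spectrally-negative exponent $\phi^-(iu)$, the whole content of the proposition is that
$$
\int_0^\infty \bigl(1-e^{-iu y}+iu y\,1_{\{y<1\}}\bigr)\,\pi^+(dy)
$$
differs from a rational function of $iu$ only by a polynomial (which can be absorbed into $\phi^-$, since polynomials in $iu$ of degree $\leq 2$ are themselves spectrally-negative exponents, and we will have to check the degree is low enough — see the obstacle below). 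So the crux reduces to: $\pi^+(dy)=\sum_j c_j \frac{y^{n_j}}{n_j!}e^{-\gamma_j y}\,dy$ if and only if $\widehat{\pi^+}(\lambda):=\int_0^\infty e^{-\lambda y}\pi^+(dy)$ is a rational function of $\lambda$ with all poles in $\{\Re(\lambda)<0\}$ (the poles being at $\lambda=-\gamma_j$).

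First I would establish the elementary transform identity $\int_0^\infty e^{-\lambda y}\frac{y^{n_j}}{n_j!}e^{-\gamma_j y}\,dy=(\lambda+\gamma_j)^{-(n_j+1)}$, valid for $\Re(\lambda)>-\Re(\gamma_j)$, so that a finite combination as in $(i)$ has Laplace transform $\sum_j c_j(\lambda+\gamma_j)^{-(n_j+1)}$, a proper rational function with poles only at the $-\gamma_j$, hence only in the left half-plane. Conversely, if $\widehat{\pi^+}$ is rational with all poles in $\{\Re<0\}$, then — since $\widehat{\pi^+}(\lambda)\to 0$ as $\Re(\lambda)\to+\infty$ (dominated convergence, $\pi^+$ being a measure integrating $y\wedge 1$ hence finite away from $0$, and the $y^2$-integrability near $0$ controlling the rest) — it is a \emph{proper} rational function, and its partial-fraction decomposition is precisely $\sum_j c_j(\lambda+\gamma_j)^{-(n_j+1)}$ for some $\gamma_j$ with $\Re(\gamma_j)>0$; inverting the transform term by term (injectivity of the Laplace transform on measures, or matching against the explicit densities) recovers $(i)$. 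This gives the equivalence $(i)\Leftrightarrow$ ``$\widehat{\pi^+}$ rational with left-half-plane poles.''

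Next I would connect this to the form of $\phi$ in $(ii)$. Differentiating the compensated integral twice in $iu$ kills the linear compensator and the constant, leaving (up to sign) $\int_0^\infty y^2 e^{-iu y}\pi^+(dy)$, which is $\frac{d^2}{d\lambda^2}\widehat{\pi^+}(\lambda)$ evaluated at $\lambda=iu$; this is rational iff $\widehat{\pi^+}$ is. Hence the positive-jump part of $\phi$ is, modulo a polynomial of degree $\leq 2$ in $iu$ and modulo the analytic structure of $\widehat{\pi^+}$, a rational function $P/Q$ with $Q$ having the roots $-\gamma_j$, i.e.\ $\Re<0$; and the degree-$\leq 2$ polynomial remainder, together with $\phi^-$, is again a spectrally-negative exponent. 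Running the argument backwards: given $(ii)$, the second $iu$-derivative of $P/Q$ is a rational function with left-half-plane poles that must coincide with $\int_0^\infty y^2 e^{-iu y}\pi^+(dy)$, forcing $\widehat{\pi^+}$ rational with left-half-plane poles, hence $(i)$.

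\textbf{Main obstacle.} The delicate point is bookkeeping the polynomial part. One must verify that when $\widehat{\pi^+}$ is rational the ``non-rational'' discrepancy in $\phi$ coming from the compensator $iuy\,1_{\{y<1\}}$ and from the behaviour near $y=0$ is genuinely a polynomial of degree at most $2$ in $iu$ (so it can be swallowed into $\phi^-$, a Gaussian-plus-drift-plus-spectrally-negative exponent), and conversely that the $Q$ in $(ii)$ can always be taken with \emph{all} roots (not merely the ``extra'' ones) in the open left half-plane — i.e.\ that any right-half-plane or imaginary factors of $Q$ would contradict integrability of $\pi^+$ or would already belong to $\phi^-$. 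I expect this to be handled exactly as the paper promises, ``by elementary complex analysis arguments'': Liouville-type reasoning on $\phi - \phi^- - P/Q$ as an entire function of controlled growth, plus the location of poles of $\widehat{\pi^+}$ being pinned down by the half-plane of convergence $\Re(\lambda)>0$ of the Laplace transform of a positive measure on $]0,+\infty[$.
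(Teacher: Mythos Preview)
Your plan for $(i)\Rightarrow(ii)$ is exactly the paper's: compute $\int_0^\infty e^{-\lambda y}\frac{y^{n_j}}{n_j!}e^{-\gamma_j y}\,dy=(\lambda+\gamma_j)^{-(n_j+1)}$, observe that $\pi^+$ is then a finite measure so its contribution to $\phi$ is the exponent of a compound Poisson process, namely $\sum_j c_j\bigl[\gamma_j^{-(n_j+1)}-(\lambda+\gamma_j)^{-(n_j+1)}\bigr]=P/Q$, and $\phi-P/Q$ is automatically the exponent of a spectrally negative L\'evy process.

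For $(ii)\Rightarrow(i)$ the paper takes a shorter and cleaner route than your differentiation idea, and in doing so dissolves precisely the obstacle you flag. Rather than differentiating $\phi$ twice in $u$ (which would require $\int_0^\infty y^2\,\pi^+(dy)<\infty$, not known a priori), the paper uses only the standard growth bound $\phi(iu)=O(u^2)$ and $\phi^-(iu)=O(u^2)$ valid for any L\'evy exponent; this forces $P/Q(iu)=O(u^2)$ and hence $\deg P\le\deg Q+2$. One then writes the partial-fraction expansion
\[
\frac{P}{Q}(iu)=au^2+bu+c+\sum_j\frac{k_j}{(iu+\gamma_j)^{n_j+1}},
\]
and interprets both sides of $\phi=\phi^-+P/Q$ as Fourier transforms of tempered distributions. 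The L\'evy--Khintchine formula says $\phi(iu)$ is the transform of a distribution whose restriction to $]0,+\infty[$ equals $-\pi^+(dx)$; the polynomial $au^2+bu+c$ transforms to a combination of $\delta_0,\delta_0',\delta_0''$ (supported at $0$), while $\sum_j k_j(iu+\gamma_j)^{-(n_j+1)}$ transforms to $\sum_j k_j\frac{x^{n_j}}{n_j!}e^{-\gamma_j x}1_{x>0}$. Since $\phi^-$ contributes nothing on $]0,+\infty[$, comparing restrictions to $]0,+\infty[$ gives $(i)$ directly.

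So your outline is correct, but the twice-differentiation step is both unnecessary and the source of your own worry: the distributional comparison on $]0,+\infty[$ replaces it with no loss and no integrability hypothesis.
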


\begin{proof} Assume $(i)$. Notice that the compound Poisson process with L\'evy measure $$\pi(dy)=\sum_{j=1}^n c_j {y^{n_j}\over n_j!} e^{-\gamma_j y}1_{y>0}dy$$ has L\'evy exponent $$\sum_{j\in J}  c_j [{1\over (\gamma_j)^{n_j+1}}- {1\over (\l+\gamma_j)^{n_j+1}}]=:{P(\l)\over Q(\l)},$$ 

The function  $\phi-{P\over Q}$ is then the L\'evy exponent of a L\'evy process without positive jumps. This establishes $(ii)$.

Conservely, assume $(ii)$. Since $\phi$ and $\phi^-$  are    exponents of  L\'evy processes, we know  that  $\phi(iu)=O(u^2)$ and  $\phi^-(iu)=O(u^2)$ (see proposition  2 chapter 1 of \cite {B96}), thus ${P\over Q}(iu)$, construed as a rational function of $u$, is $O(u^2)$. Consequently $\deg P\leq \deg Q+2$.

Now, write ${P\over Q}(iu)$ in its fractional  expansion
$${P\over Q}(iu)= au^2+ b u +c+  \sum_{j=1}^n {k_j\over (iu+\gamma_j)^{n_j+1}}$$

Then ${P\over Q}(iu)$ is the Fourier transform of the Schwartz distribution (here, $\delta$ stands for the Dirac mass at $0$); 

$$a\delta'' -ib\delta' +c \delta + \sum_{j=1}^n  k_j {x^{n_j}\over n_j!}e^{-\gamma_j x} 1_{x>0}dx$$

On the other hand, one can deduce from the  L\'evy Kinchin formula applied to $\phi$  that $\phi(iu)$ is the Fourier transform of a distribution whose restriction to $]0,+\i[$ is $-1_{x>0}\pi(dx)$ (see chapter 5 of \cite {V02} for example) where $\pi$ is the L\'evy measure. From this, we see that  
$$1_{x>0}\pi(dx)=-\sum_{j=1}^n k_j {x^{n_j}\over n_j!}e^{-\gamma_j x} 1_{x>0}dx$$

This finishes the proof.

\end{proof}

The following is a famous result (see proposition 2, chapter I of \cite{B96})  which applies to any L\'evy process and  which will be  of use later .

\begin{lemma} There  exists an exponent of  a subordinator $\psi$ and an exponent of the opposite of a subordinator  $\cc \psi$, such that  

$$\psi.\cc \psi=\phi\qquad \hbox{on}\qquad  i\R$$

Such a couple $(\psi,\cc \psi)$ is unique up to a multiplicative constant.

\end{lemma}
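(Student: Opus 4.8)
The plan is to derive existence from the probabilistic Wiener--Hopf factorization and uniqueness from a Liouville-type argument in the complex plane; one could instead attempt a purely complex-analytic existence proof by splitting $\log\phi$ across the line $i\R$, but recognising the resulting factors as exponents of a subordinator and of the opposite of a subordinator seems to require the probabilistic input anyway, so I would not pursue that.

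\emph{Existence.} Suppose first $\phi(0)>0$, so that $X$ is killed at the exponential time $\z$ of parameter $\phi(0)$, and set $\overline X=\sup_{0\le s<\z}X_s\ge 0$, $\underline X=\inf_{0\le s<\z}X_s\le 0$. The Wiener--Hopf factorization at the killing time, which rests on the independence of $\overline X$ and $X_{\z^-}-\overline X$ together with $X_{\z^-}-\overline X\stackrel{d}{=}\underline X$ (a fact one obtains from excursion theory at the running supremum, via the local time at the maximum), gives
$$\frac{\phi(0)}{\phi(iu)}=\E\big(e^{-iu\overline X}\big)\,\E\big(e^{-iu\underline X}\big),\qquad u\in\R.$$
Now $\overline X$ has the law of the ascending ladder-height subordinator run up to an independent exponential time, so $\E(e^{-iu\overline X})=c_+/\psi(iu)$ for a subordinator exponent $\psi$ (that ladder-height subordinator, with an extra killing term); symmetrically $\E(e^{-iu\underline X})=c_-/\cc\psi(iu)$ with $\cc\psi$ the exponent of the opposite of a subordinator, associated with the descending ladder height. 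Substituting yields $\psi(iu)\cc\psi(iu)=(c_+c_-/\phi(0))\,\phi(iu)$, and replacing $\psi$ by the constant multiple $(\phi(0)/(c_+c_-))\,\psi$, which is again a subordinator exponent, gives $\psi\cc\psi=\phi$ on $i\R$. If $\phi(0)=0$, apply the preceding to $\phi+q$ for $q>0$ to get $\psi_q\cc\psi_q=\phi+q$; since the ladder exponents vary monotonically and continuously with $q$, and a suitably normalised pointwise limit of subordinator exponents is again one, letting $q\downarrow 0$ produces the factorization, now with possibly $\psi(0)=0$ or $\cc\psi(0)=0$.

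\emph{Uniqueness.} An exponent of a subordinator $\psi(\l)=\k+\d\,\l+\int_{(0,\i)}(1-e^{-\l x})\,\mu(dx)$ is holomorphic on $\{\Re\l>0\}$, continuous on its closure, satisfies $|\psi(\l)|\le C(1+|\l|)$, and --- unless it is identically $0$ --- has $\Re\psi(\l)>0$ on $\{\Re\l>0\}$, hence is zero-free there; symmetrically, the exponent of the opposite of a subordinator is holomorphic, of at most linear growth, and zero-free on $\{\Re\l<0\}$. Assume $\psi_1\cc\psi_1=\psi_2\cc\psi_2=\phi\not\equiv0$ on $i\R$. On $i\R$ one has $\psi_1/\psi_2=\cc\psi_2/\cc\psi_1$; the left member continues holomorphically to $\{\Re\l>0\}$ and the right member to $\{\Re\l<0\}$, and once one checks that the isolated, finite-order zeros the denominators may carry on $i\R$ are removable --- which holds because $\psi_1\cc\psi_1=\psi_2\cc\psi_2$ forces the zero divisors on $i\R$ to coincide --- the two pieces glue into an entire function $R$. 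The same argument with the indices swapped shows $1/R$ is entire, so $R$ is entire and zero-free. A Phragm\'en--Lindel\"of estimate in the two half-planes, using the linear upper bound on $\psi_1,\psi_2$ and a matching lower bound for $|\psi_2|$ away from its isolated zeros on $i\R$, shows $R$ and $1/R$ are polynomially bounded, hence $R$ is a nonzero constant $\alpha$; since $\psi_1,\psi_2>0$ on $(0,\i)$ we get $\alpha>0$, and $\psi_1=\alpha\psi_2$, $\cc\psi_1=\alpha^{-1}\cc\psi_2$, which is the asserted uniqueness.

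\emph{Main obstacle.} The substantial point is the displayed identity together with the identification of its two factors as reciprocals of a subordinator exponent and of the exponent of the opposite of one --- the probabilistic heart of Wiener--Hopf --- so, apart from invoking excursion theory there (or simply citing \cite{B96}), the rest is routine; the one genuinely delicate step is the boundary bookkeeping on $i\R$ (removable zeros, lower bounds for $|\psi_2|$ near $i\R$ at infinity) needed to make the Liouville argument go through.
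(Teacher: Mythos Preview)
The paper does not prove this lemma at all: it is stated as ``a famous result (see proposition~2, chapter~I of \cite{B96})'' and used without further argument. Your proposal is therefore not being compared against any proof in the paper, but rather against the standard reference.

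That said, your sketch is a faithful outline of the classical argument one finds in Bertoin (Chapter~VI rather than Chapter~I, incidentally): existence from the probabilistic splitting of $X_{\z^-}$ at its maximum together with identification of the factors via ladder heights, and uniqueness by gluing $\psi_1/\psi_2$ and $\cc\psi_2/\cc\psi_1$ across $i\R$ into an entire, zero-free function of controlled growth. Two places deserve a little more care than you give them. First, the passage $q\downarrow 0$ in the unkilled case is not the route usually taken---one normally writes the factors directly via Fristedt's formula or via excursion theory for the unkilled process, which avoids having to argue that a suitably normalised limit of subordinator exponents is again one and is non-degenerate. Second, the polynomial lower bound on $|\psi_2|$ near $i\R$ that you need for the Phragm\'en--Lindel\"of step is not automatic; a cleaner way is to observe that on $i\R$ the only possible zero of a subordinator exponent is at $0$ (since $\Re\psi(iu)=\k+\int(1-\cos ux)\,\mu(dx)>0$ for $u\ne 0$ unless $\psi\equiv 0$), so the boundary bookkeeping reduces to a single point, and then to note that both $R$ and $1/R$ are separately $O(|\l|)$ in each half-plane, which already forces $R$ to be a polynomial of degree at most one with the same for $1/R$, hence constant. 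With these two adjustments your argument is complete.
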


The functions $\psi$ and $\cc \psi$  will be refered to as the "positive" (for $\psi$)  and "negative"  (for $\cc\psi$) Wiener -Hopf factors of the exponent $\phi$, unlike the ordinary uses  which affect these expressions to the functions ${\psi(0)\over\psi}$ and ${\cc\psi(0)\over\cc\psi}$.

{\bf ASSUMPTION  :  In the rest of this paper, conditions  $(i)$ and $(ii)$ of proposition  \ref{Ass} will be assumed to be satisfied and this assumption will be call Assumption \ref{Ass} }.

Property $(ii)$ of proposition \ref{Ass} and the fact that  $\phi^-$ has an holomorphic extension on the half plane $\{\Re(\l)<0 \}$ (see chapter 7 of \cite{B96}) imply that $\phi$ has a meromorphic extension on this half plane, and we  we will still denote by $\phi$ this extension.

We will denote $$-\gamma_1,-\gamma_2,...,-\gamma_n$$  the poles of $\phi$ repeated according to their multiplicity.  In particular, $\deg  Q=n$ when the fraction  ${P\over Q}$  is irreductible, in the expression of condition $(ii)$ of proposition \ref{Ass} ($\phi=\phi^-+{P\over Q}$) and we will denote 

$$n_j: =\sharp\{ k<j; \gamma_k=\gamma_j\}\qquad j=1,\dots,n $$ 

Notice that the $n_j$ are all zero if and only if  all the poles of $\phi$ on  the half plane $\{Re(\l)<0\}$ are simple, or equivalently, if and only if the L\'evy measure is a (not necessarily positive) combination of exponential distributions.

\begin{theorem} \label{WH}
$\psi$ is of the form :
$$\psi(\l)=\psi_{\i}{\Pi_1^m(\l+\beta_i)\over \Pi_1^n(\l+\gamma_j)},$$

where $\psi_{\i}$ is a positive constant. 

If $\lim_{\l\to -\i}{\phi(\l)\over \l}\in ]0,+\i]$ then $m=n+1$.  Otherwise, $\lim_{\l\to -\i}{\phi(\l)\over \l}\in ]-\i,0]$ and $m=n$. 

More over, $\{-\beta_1,\dots,-\beta_n\}$ is the set of the roots of $\phi$ lying on the half plane $\{\Re(\l)<0\}$ (repeated according to their multiplicity)  together  with $0$ if $\phi(0)=0$ and $\phi'(0)\geq 0$. 
 \end{theorem}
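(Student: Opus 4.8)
The starting point is the Wiener–Hopf identity $\psi \cdot \cc\psi = \phi$ on $i\R$, valid for any Lévy process (the Lemma). Under Assumption \ref{Ass} we have $\phi = \phi^- + P/Q$, and since $\phi^-$ extends holomorphically to $\{\Re(\l) < 0\}$ while $P/Q$ is rational, $\phi$ extends meromorphically to the left half plane with poles exactly at $-\gamma_1,\dots,-\gamma_n$ (counted with multiplicity). The first task is to identify $\psi$ as a rational function. The key structural fact I would exploit is that $\psi$, being the exponent of a subordinator, extends holomorphically and has no zeros on $\{\Re(\l) > 0\}$ (its reciprocal is a Laplace transform there), and symmetrically $\cc\psi$ extends holomorphically and zero-free on $\{\Re(\l) < 0\}$. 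Therefore, on $\{\Re(\l) < 0\}$ the function $\psi = \phi/\cc\psi$ is meromorphic, and all its poles there are among the poles of $\phi$, i.e. among $\{-\gamma_j\}$, while $\cc\psi$ being zero-free there contributes no extra poles to $\phi/\cc\psi$. Conversely, since $\psi$ is holomorphic and zero-free on $\{\Re(\l) > 0\}$, and $\cc\psi = \phi/\psi$ there, the poles of $\cc\psi$ on the right half plane are controlled. A growth/Liouville-type argument (using $\phi(iu) = O(u^2)$, hence $\psi(iu)\,\cc\psi(iu) = O(u^2)$, combined with the subordinator bounds $|\psi(iu)| = O(|u|)$ and $|\cc\psi(iu)| = O(|u|)$) forces $\psi$ to be a rational function whose denominator is exactly $\Pi_1^n(\l + \gamma_j)$, up to possible cancellation — and here one must check that no $\gamma_j$ is actually a zero of the numerator, which follows from irreducibility of $P/Q$ together with the fact that $\phi$ genuinely has those poles.

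Next, the degree count. Write $\psi(\l) = \psi_\i \,\Pi_1^m(\l+\beta_i)/\Pi_1^n(\l+\gamma_j)$. To pin down $m$, I would examine the behavior of $\psi(\l)$ as $\l \to -\i$ along the real axis. A subordinator exponent satisfies $\psi(\l)/\l \to \mathrm{d} \geq 0$ (the drift) as $\l \to -\i$ if there is a drift, or more generally $\psi(\l) = o(\l)$ when the drift vanishes; similarly $\cc\psi(\l)/\l$ has a limit in $[0,+\i]$. Since $\phi(\l)/\l = (\psi(\l)/\l)\cdot\cc\psi(\l)$ (or the symmetric grouping), the dichotomy $\lim_{\l\to-\i}\phi(\l)/\l \in ]0,+\i]$ versus $\in ]-\i,0]$ translates into whether the positive factor carries the linear growth or the negative factor does. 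In the first case $\psi$ grows linearly, so $m - n = 1$, i.e. $m = n+1$; in the second case $\psi$ stays bounded in ratio, giving $m = n$. This step is mostly bookkeeping with the asymptotics of subordinator Laplace exponents, though one should be careful about the boundary case where $\phi(\l)/\l \to 0$.

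The last and most delicate part is identifying the zeros $-\beta_i$. The zeros of $\psi$ on $\{\Re(\l) < 0\}$ must, via $\psi = \phi/\cc\psi$ and the fact that $\cc\psi$ is zero-free there, be exactly the zeros of $\phi$ on that half plane (with multiplicity); the poles of $\phi$ there, namely $-\gamma_j$, are cancelled by the denominator of $\psi$. So $\{-\beta_1,\dots\}$ restricted to the open left half plane is precisely the zero set of $\phi$ there. The subtlety is the point $\l = 0$: whether $0$ is a root of $\psi$. Since $\psi(0) = 0$ iff the subordinator associated to $\psi$ is not killed and is not the trivial process, and $\psi(0)\cc\psi(0) = \phi(0)$, one analyzes $\phi(0) = 0$ (no killing) together with the sign of $\phi'(0)$ (which determines the drift direction of $X$, hence whether the running supremum drifts to $+\i$). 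The claim is that $0$ is a root of $\psi$ precisely when $\phi(0) = 0$ and $\phi'(0) \geq 0$; the argument here invokes the standard dichotomy for $\overline{X}_\i = \sup_t X_t$ (a.s. finite iff $X$ drifts to $-\i$), translated through the relation between $\psi$ and the ascending ladder height process. I expect this zero-at-the-origin analysis — correctly matching $\phi'(0) \geq 0$ with $\psi(0) = 0$ and keeping track of the $m = n$ versus $m = n+1$ cases simultaneously — to be the main obstacle, since it requires combining the meromorphic structure with probabilistic facts about recurrence/transience and with the degenerate cases (compound Poisson, zero drift, $\phi'(0) = 0$).
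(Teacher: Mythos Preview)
Your overall architecture matches the paper's: extend $\psi$ to the left half plane via $\psi=\phi/\cc\psi$, show the extension is rational with the $-\gamma_j$ as poles, determine the numerator degree from asymptotics, then identify the zeros. Two places in your plan need strengthening, however.

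First, the Liouville step. Bounds on the imaginary axis alone ($\phi(iu)=O(u^2)$, $|\psi(iu)|=O(|u|)$) do not force rationality: an entire function can have polynomial growth on a line and still be transcendental. The paper instead sets $\Psi(\l):=\psi(\l)\,\Pi_1^n(\l+\gamma_j)$, first verifies that $\Psi$ is entire --- this requires checking continuity of the two definitions of $\psi$ across $i\R$, and in particular a separate argument at $\l=0$ where $\cc\psi$ may vanish (the paper checks $\l\psi(\l)\to 0$ from both sides to rule out a pole there) --- and then bounds $\Psi$ separately on each half plane. On $\{\Re(\l)>0\}$ the subordinator bound $\psi(\l)/\l\to c\in[0,+\i[$ as $\l\to+\i$ gives $\Psi=O(\l^{n+1})$; on $\{\Re(\l)<0\}$ the identity $\psi=\phi/\cc\psi$ together with $\phi(\l)=O(\l^2)$ and the fact that $1/\cc\psi$ is bounded on $\{\Re(\l)<-1\}$ gives $\Psi=O(\l^{n+2})$. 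These two half plane bounds together force $\Psi$ to be a polynomial of degree at most $n+1$ (and at least $n$, since $1/\psi$ is bounded at $+\i$). Your sketch omits both the gluing at $i\R$ and the half plane growth control.

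Second, your subordinator asymptotic is stated in the wrong direction: the drift of a subordinator is $\lim_{\l\to+\i}\psi(\l)/\l$, not $\l\to-\i$. On the left half plane $\psi$ is only defined through the extension $\phi/\cc\psi$ you are trying to analyze, so you cannot invoke subordinator properties of $\psi$ there. The paper's dichotomy runs the other way: having already shown $\Psi$ is polynomial of degree $n$ or $n+1$, it computes $\lim_{\l\to-\i}\psi(\l)/\l$ from the rational form, combines this with the opposite-subordinator asymptotics of $\cc\psi$ at $-\i$ (which \emph{are} available, since $\cc\psi$ lives on the left half plane), and reads off $\lim_{\l\to-\i}\phi(\l)/\l$ via $\phi=\psi\cc\psi$. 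Your probabilistic route for the $\l=0$ question (via the dichotomy for $\sup_t X_t$) is a legitimate alternative to the paper's direct computation $\phi'(0)=\cc\psi'(0)\psi(0)+\cc\psi(0)\psi'(0)$ with sign constraints on each factor, but be sure it covers the boundary case $\phi'(0)=0$.
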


\begin{proof}  Notice first that, thanks to L\'evy Kintchin formula, $\phi(\l)$ does not vanish for $\l$ on the imaginary axis except possibly  for $\l=0$,  
$\phi$ is a meromorphic funtion on $\{\Re(\l)<0\}$ and $\phi$ is continuous by the left at each point of the imaginary axis $i\R$.
 
 On the other hand, the function $\psi$ is an exponent of a subordinator, thus it is analytic on the open half plane $\{\Re(\l)>0\}$and continuous on the closed half plane $\{\Re(\l)\geq 0\}$ (see \cite {B96} chapter 3  for example).
 Also, the  identity $\psi\cc\psi =\phi$ on $i\R$ implies that  $\psi={\phi\over \cc\psi}$ on $i\R$. Thus, since 
 $\phi$ is meromorphic and $\cc\psi$ holomorphic on $\{\Re(\l)<0\}$, we see that $\psi$ has a meromorphic extension on $\{\Re(\l)<0\}$.We  denote  again $\psi$  this extension. 

More over $\psi={\phi\over\cc\psi}$ is continuous by the left on the axis $i\R$, except possibly at $\l=0$ when $\cc\psi(\l)=0$.

Then $\psi$ is meromorphic on $\C$ except possibly at $0$.   
However, because $\cc\psi$ is the exponent of the opposite of a subordinator, we have (see page 73 of \cite{B96}) $$\lim_{\l\to 0;\Re(\l)< 0}{\l\over\cc\psi(\l)}\to ]-\i,0],$$   then 
 $$\lim_{\l\to 0;\Re(\l)<0} \l.{\phi(\l)\over \cc\psi(\l)}=\lim_{\l\to 0;\Re(\l)<0}\l\psi(\l)=0.$$ 
 
 Also, $\psi$ is continuous by the right at the point $0$, then $\lim_{\l\to 0;\Re(\l)>0}\l\psi(\l)=0$ and we deduce that $\psi$ is again  holomorphic at $0$. 
 
 In conclusion, $ \psi$ is meromorphic on $\C$ and its poles, necessarily in the half plane $\{\l;\Re(\l)<0\}$,   are  the same as those of $\phi$ (because of  the identity $\psi\cc\psi=\phi$ and because $\cc\psi$ is holomorphic on $\{\l;\Re(\l)<0\}$). Therefore, these poles are the  $-\gamma_j$. 
 
 Now, write  $$\psi(\l)={\Psi(\l) \over \Pi_1^n(\l+\gamma_j)},$$ 
where  $\Psi$ is an entire function. Notice  the limit,  valid for any exponent of a subordinator ( see proposition 2 of Chapter 1 of \cite{B96})

$$\lim_{\l \to +\i}{\psi(\l)\over \l}\in [0,+\i[$$

Then 

\begin{equation} \label {0+}\Psi(\l)= O(\l^{n+1})\qquad (\Re(\l) \to +\i)\end{equation}

And,  since $$\psi (\l) ={\Psi(\l) \over \Pi_1^n(\l+\gamma_j)}= {\phi^-(\l)+{P(\l)\over Q(\l)} \over \cc\psi(\l)},$$ 
 $\phi^-(\l)$ and  ${P(\l)\over Q(\l)}$ are $O(\l^2)$  on $\{\Re(\l)<0\}$   (see proof of proposition \ref {Ass}) and   ${1\over\cc \psi(\l)}$ is bounded (by ${1\over\cc\psi(-1)}$) on $\{\Re(\l)<-1\}$,  because  it is the Laplace transform of a measure supported by $]-\i,0]$ (see chapter 3 of \cite {B96}  for example)  and $\deg P\leq \deg Q +2$ , we deduce that 
 
 \begin{equation} \label{0-}\Psi(\l)= O(\l^{n+2})\qquad (\Re(\l) \to -\i)\end{equation}
 
 When joining property (\ref{0+}) and (\ref{0-}), $\Psi$ is a polynomial with degree  at most $n+1$. Moreover, since ${1\over \psi(\l)}$ is bounded for $\l\to +\i$, then $\Psi$ has a degree at least $n$. 
 
 Thus 
 $\Psi$ is a polynomial and its degree is $n$ or $n+1$.
 
If $\Psi$ is a polynomial of degree $n+1$ then  
$$\lim_{\l\to-\i}{\psi(\l)\over \l}\in ]0,+\i[\quad\hbox{and}\quad \lim_{\l\to -\i}\cc \psi(\l)\in ]0,+\i]$$ 
(this last limit is valid for any exponent of the opposite of a subordinator). Since   $\phi=\psi\cc\psi$, we obtain  $$\lim_{\l\to -\i} {\phi(\l)\over \l}\in ]0,+\i].$$

And, if $\Psi$ is a polynomial of degree $n$,  then 
$$\lim_{\l\to-\i}\psi(\l)\in ]0,+\i[\quad\hbox{and}\quad \lim_{\l\to-\i}{\cc \psi(\l)\over \l}\in ]-\i,0],
$$ (this last limit is valid for any exponent of the opposite of a subordinator) and  we obtain $$\lim_{\l \to -\i } {\phi(\l)\over\l}\in ]-\i,0]$$

Finaly, $\cc \psi$ does not vanish on the left  half plane $\{\Re(\l)<0\}$ because it is the exponent of the opposite of a subordinator and so, the roots of $\psi$, which are necessary on the left half plane $\{Re(\l)<0\}$, except possibly $0$  (because the exponent of a subordinator doesn't vanish in the half plane $\{Re(\l)>0\}$), are  the one of $\phi$. Let us study when $0$ is a root of $\psi$. 

The function  $\cc \psi$ has a left derivative at $0$ (possibly equal to $-\i$) and the  function $\psi$ has a derivative at $0$ because it is a rational function and $0$ is not a pole. Then, we can write (here, the symbol $'$ has to be understood as a left derivative )
 
 $$\phi(0)=\psi(0)\cc\psi(0)$$ 
$$\phi'(0)=\cc\psi'(0).\psi(0)+ \cc\psi(0)\psi'(0).$$

Since $\psi(0)\in [0,+\i[$, $\cc\psi(0)\in[0,+\i[$, $\cc\psi'(0)\in [-\i,0[$, $\psi'(0)\in ]0,+\i]$ (that are obvious  consequences of L\'evy-Kinchin formula for exponent of subordinator or opposite of a subordinator), we obtain :

$$\phi(0)=0\Longrightarrow \biggl(\psi (0)=0\hbox{  and }\phi'(0)\geq 0\biggr) \hbox{ or }\biggl(\cc\psi(0)=0 \hbox{ and }\phi'(0)\leq 0\biggr)$$

And the proof is finished.
\end{proof}

{\bf In the sequel we will normalize $\psi$ by setting $\psi_{\i}=1$.}

We will always denote $$-\beta_1,-\beta_2,\dots,-\beta_m$$ the (possibly equal) roots of  $\psi$ and 
$$m_i=\sharp \{k<i; \beta_k=\beta_i\} $$

 ( $m_i=0$ for all $i$'s iff the roots $-\beta_1,\dots,-\beta_m$   of $\psi$ are simple).

{\bf Remark} If $X$ is of bounded variations and has a non positive drift then $m=n$. In all other cases, 
we have $m=n+1$, $\psi$ is then the exponent of a subordinator with positive drift and  $X$ "creeps upwards" (see theorem 19 chapter 6 of \cite{B96}).

Now, we introduce few notations which will be useful in the sequel, and which are related to the negative Wiener-Hopf factor $\cc \psi$.

First, $\cc \psi$ being the exponent of the opposite of a subordinator, the inverse of $\cc\psi$, ${1\over\cc\psi}$, is the Laplace transform of a measure supported by $]-\i,0]$. We will denote this measure   by $\cc U(dy)$, 

$${1\over \cc \psi(\l)}=:\int_{]-\i, 0]} e^{-\l y} \cc U(dy)\qquad (\Re(\l)<0)$$

Also,  for all $x\in ]0,+\i[$, we  denote   by $\cc U_{[-x,0]}(dy)$ and $\cc U_{]-\i,-x[}(dy)$ the measures $1_{[-x,0]}(y)\cc U(dy)$ and 
$1_{]-\i,-x[}(y)\cc U(dy)$ and by $\cc U_{[-x,0]}(\l)$ and $\cc U_{]-\i,-x[}(\l)$ their Laplace transform; 

$$\cc U_{[-x,0]}(\l):=\int_{[-x,0]} e^{-\l y}\cc U(dy)\quad (\l\in \C)$$
$$\cc U_{]-\i,-x[}(\l):=\int_{]-\i,-x[} e^{-\l y}\cc U(dy)\qquad (\Re(\l)<0)$$

Using standard facts about the relation between  negative  Wiener-Hopf factor $\cc \psi$ and the fluctuations of the L\'evy process (see  chapter 6 of \cite{B96}), we obtain  the next proposition.

\begin{proposition} \label{Flu-}

1)  If   $\phi(0)>0$ or $\phi'(0)>0$ then then the function ${\cc \psi(0)\over \cc\psi(\l)}$ is the Laplace transform of the measure $\cc\psi(0)\cc U(dy)$ which is  the distribution of $m$, the minimum of $X$ ($m:=\inf\{ X_t; 0\leq t<\z\}$).

2) $\cc \psi(\l)\cc U_{]-\i,-x[}(\l)$ is the Laplace transform of the distribution $\P(X_{T_x}\in dy; T_x<\z)$, ($T_x:=\inf\{t ; X_t<-x\}$).

\end{proposition}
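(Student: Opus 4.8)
The plan is to reduce both assertions to classical identities of fluctuation theory for the descending ladder process, as presented in Chapter VI of \cite{B96}, and then rephrase them in terms of the measure $\cc U$ and its restrictions introduced above. First I recall that the negative Wiener--Hopf factor $\cc\psi$ is, up to the normalisation fixed earlier, the Laplace exponent of the (possibly killed) descending ladder height subordinator $H^-$, so that $\E(e^{\l H^-_t}1_{t<\zeta^-})=e^{-t\cc\psi(\l)}$ for $\Re(\l)\le 0$, the killing rate being $\cc\psi(0)$. The potential (renewal) measure of $H^-$ is exactly $\cc U$ up to a multiplicative constant, which is pinned down by our normalisation $\psi_\i=1$; this is where the identity ${1\over\cc\psi(\l)}=\int_{]-\i,0]}e^{-\l y}\,\cc U(dy)$ comes from, so that $\cc U$ is already the renewal measure of $H^-$ with the correct constant.

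For part 1): the overall infimum $m=\inf\{X_t;\,0\le t<\zeta\}$ equals $-H^-_{\infty}$, i.e. the total height accumulated by the descending ladder process before it is killed (or before the process drifts to $+\i$). Under the stated hypothesis $\phi(0)>0$ or $\phi'(0)>0$, Theorem \ref{WH} shows $0$ is not a root of $\psi$ (the root at $0$ occurs precisely when $\phi(0)=0$ and $\phi'(0)\ge 0$), equivalently $\psi(0)>0$, which forces $\cc\psi(0)>0$ when $\phi(0)>0$, while when $\phi(0)=0,\phi'(0)>0$ one gets $\cc\psi(0)>0$ as well; in either case $H^-$ is genuinely killed at rate $\cc\psi(0)>0$, so $m$ is a.s.\ finite. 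The law of $-m=H^-_{\infty}$ is then the renewal measure weighted by the killing: integrating the independent killing time gives $\P(-m\in dy)=\cc\psi(0)\,\cc U(dy)$, whose Laplace transform is $\cc\psi(0)\int e^{-\l y}\cc U(dy)=\cc\psi(0)/\cc\psi(\l)$, as claimed.

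For part 2): let $T_x=\inf\{t;X_t<-x\}$ be the first passage below $-x$. The descending ladder process exits $[-x,0]$ at the level $-X_{T_x}$, and the standard quintuple-type (here it suffices to use the overshoot) identity for subordinators says that the position $X_{T_x}$ on $\{T_x<\zeta\}$ is obtained by running $H^-$ up to its first passage above $x$: writing $g$ for the value of $H^-$ just before that passage (distributed according to $\cc U$ restricted to $[0,x[$, i.e.\ $\cc U_{[-x,0]}$ after the sign change) and resolving the subsequent jump of $H^-$ via its L\'evy measure, the Markov/renewal argument collapses to $\P(X_{T_x}\in dy;\,T_x<\zeta)$ having Laplace transform $\cc\psi(\l)\,\cc U_{]-\i,-x[}(\l)$. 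Concretely, the cleanest route is the Pecherskii--Rogozin / Kendall-type identity: for $\Re(\l)\le 0$ one has $\int e^{-\l y}\P(X_{T_x}\in dy;T_x<\zeta)=\cc\psi(\l)\bigl(\cc U(\l)-\cc U_{[-x,0]}(\l)\bigr)=\cc\psi(\l)\,\cc U_{]-\i,-x[}(\l)$, the middle equality being just the decomposition $\cc U=\cc U_{[-x,0]}+\cc U_{]-\i,-x[}$ together with $\cc\psi\cdot\cc U(\l)=1$, and the first equality being the overshoot formula for the downgoing ladder subordinator (Chapter VI of \cite{B96}).

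I expect the main obstacle to be bookkeeping rather than depth: namely, checking that the multiplicative normalisation constant relating $\cc U$ to the genuine renewal measure of $H^-$ is exactly $1$ under our convention $\psi_\i=1$ (so that no stray constant appears in either formula), and handling the borderline case $\phi(0)=0,\ \phi'(0)>0$ where $H^-$ is not killed but $X$ drifts to $+\i$, so that $m$ is still a.s.\ finite and $\cc\psi(0)=0$ would be the wrong normalisation --- here one must argue via $\cc\psi(0)/\cc\psi(\l)\to 1$ as $\l\to 0$ and identify $\cc U$ as a probability measure directly. Once the constant is fixed, both statements are immediate transcriptions of \cite{B96}, Chapter VI.
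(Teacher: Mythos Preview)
Your approach is exactly the paper's: the authors do not prove this proposition at all but simply write ``Using standard facts about the relation between negative Wiener--Hopf factor $\cc\psi$ and the fluctuations of the L\'evy process (see chapter 6 of \cite{B96}), we obtain the next proposition.'' Your plan is a correct expansion of that reference via the descending ladder subordinator and its renewal measure.

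One small correction to your last paragraph: in the borderline case $\phi(0)=0,\ \phi'(0)>0$ there is in fact no difficulty. From the proof of Theorem~\ref{WH} one has $\phi'(0)=\cc\psi'(0)\psi(0)+\cc\psi(0)\psi'(0)$ with $\psi(0)=0$ (since $0$ is then a root of $\psi$), hence $\phi'(0)=\cc\psi(0)\psi'(0)$; as $\psi'(0)\in\,]0,+\infty]$ and $\phi'(0)>0$, this forces $\cc\psi(0)>0$. So the descending ladder subordinator \emph{is} killed, $\cc\psi(0)\cc U$ is a genuine probability measure, and your worry about ``$\cc\psi(0)=0$ being the wrong normalisation'' does not arise. (Earlier in the same paragraph you also wrote that the hypothesis implies $0$ is not a root of $\psi$; that is false in this subcase, but it is irrelevant since what you actually need and later correctly assert is $\cc\psi(0)>0$.)
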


Now, using the dual properties settled in this proposition which  involves the positive Wiener-Hopf factor $\psi$ instead of the negative one $\cc\psi$ and using  the explicit form of $\psi$ given in theorem \ref{WH}, we obtain, 
(see also Mordecki \cite{LM08} theorem 2.2),

\begin{corollary}\label{Flu+}

1) If  $\phi(0)>0$ or $\phi'(0)<0$ then  the distribution of the maximum, $M:=\{\sup; X_t; t\in [0,\z[\}$ is 

$$\P(M\in dy)=a_0\delta_0(dy) + \sum_{i=1}^m  a_i {y^{m_i}\over m_i!} e^{-\beta_i y} dy,$$

where the coefficients $a_i$ are given by the rational expansion :

$${\Pi_{j=1}^n (1+ \l/\gamma_j)\over \Pi_{i=1}^m (1+ \l/\beta_i)}=a_0+  \sum_{i=1}^m {a_i\over (\l+\beta_i)^{m_i+1}}$$

2) The distribution of the "over shoot"  $X_{T^x}-x$ on $T^x<\z$  ($T^x=\inf\{t; X_t>x\}$) is the following

$$\P(X_{T^x}-x\in dy; T^x<\z)= c_0(x)\delta_0(dy)+ \sum_{j=1}^n c_j(x) {y^{n_j} \over n_j!} e^{-\gamma_j y}1_{y>0}dy,$$

where the coefficients $c_j(x)$ are given by the rational expension :

$$c_0(x)+ \sum_{j=1}^n {c_j(x)\over (\l+\gamma_j)^{n_j+1}}= {\Pi_{i=1}^m (1+ \l/\beta_i)\over \Pi_{j=1}^n (1+ \l/\gamma_j)}.\sum_{i=1}^m{a_i\over m_i!} {\partial ^{m_i} \over (-\partial \beta)^{m_i}}\Bigl[{e^{-\beta x}\over \l+\beta}\Bigr]_{\beta=\beta_i}$$

\end{corollary}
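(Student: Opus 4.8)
The plan is to derive both statements by \emph{duality}: I would apply Proposition \ref{Flu-} — or rather the fluctuation identities it records, which are general facts valid for an arbitrary L\'evy process — to the dual process $\hat X:=-X$, and then translate the output back through the relations between the Wiener-Hopf factors of $\hat X$ and those of $X$, together with the explicit rational form of $\psi$ from Theorem \ref{WH}. First I would note that $\hat X$ has exponent $\hat\phi(iu)=\phi(-iu)$; since $\l\mapsto\cc\psi(-\l)$ is an exponent of a subordinator and $\l\mapsto\psi(-\l)$ an exponent of the opposite of a subordinator, uniqueness of the factorization (the Lemma after Proposition \ref{Ass}) shows that the positive and negative Wiener-Hopf factors of $\hat X$ are $\cc\psi(-\cdot)$ and $\psi(-\cdot)$. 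Also $\hat\phi(0)=\phi(0)$ and $\hat\phi'(0)=-\phi'(0)$, so the hypothesis of part 1), namely $\phi(0)>0$ or $\phi'(0)<0$, is exactly ``$\hat\phi(0)>0$ or $\hat\phi'(0)>0$'', and under it $\psi(0)\neq 0$, as one reads off the last lines of the proof of Theorem \ref{WH}.

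\textbf{Part 1).} Since $\inf_{t<\z}\hat X_t=-M$, Proposition \ref{Flu-}.1) applied to $\hat X$ states that $\psi(0)/\psi(-\l)$ is the Laplace transform of the law of $-M$; equivalently $\E(e^{-\mu M})=\psi(0)/\psi(\mu)$ for $\Re(\mu)>0$. Substituting the expression of Theorem \ref{WH} (normalized by $\psi_\i=1$) and writing $\mu+\beta_i=\beta_i(1+\mu/\beta_i)$, this becomes $\Pi_{j=1}^n(1+\mu/\gamma_j)\big/\Pi_{i=1}^m(1+\mu/\beta_i)$, a proper rational function of $\mu$ whose poles are exactly the $-\beta_i$ with the stated multiplicities. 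Its partial-fraction expansion is the \emph{definition} of the $a_i$, and inverting the Laplace transform term by term — $(\mu+\beta)^{-k-1}$ is the transform of $y^ke^{-\beta y}/k!$ on $]0,\i[$, a constant is the transform of a Dirac mass at $0$ — produces the announced law of $M$.

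\textbf{Part 2).} Here $\hat T_x:=\inf\{t:\hat X_t<-x\}=T^x$ and $\hat X_{\hat T_x}=-X_{T^x}$, so Proposition \ref{Flu-}.2) applied to $\hat X$ gives, after the change of variable $\mu=-\l$,
$$\E\bigl(e^{-\mu X_{T^x}}1_{T^x<\z}\bigr)=\psi(\mu)\int_{]x,\i[}e^{-\mu z}\,V(dz),\qquad \int_{[0,\i[}e^{-\mu z}\,V(dz)=\frac1{\psi(\mu)},$$
where $V$ is the measure on $[0,\i[$ the proposition attaches to the negative factor $\psi(-\cdot)$ of $\hat X$ (read in the variable $\mu$). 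Multiplying the first identity by $e^{\mu x}$ turns its left side into the Laplace transform of the overshoot $X_{T^x}-x$. I would then: (a) expand $1/\psi(\mu)=\psi(0)^{-1}\bigl(a_0+\sum_i a_i(\mu+\beta_i)^{-m_i-1}\bigr)$ (from Part 1), giving the explicit exponential-polynomial density of $V$ on $]0,\i[$, its possible atom at $0$ being irrelevant since $x>0$; (b) compute $\int_x^\i e^{-\mu(z-x)}\,\tfrac{z^{m_i}}{m_i!}e^{-\beta_i z}\,dz=\tfrac1{m_i!}\tfrac{\partial^{m_i}}{(-\partial\beta)^{m_i}}\bigl[\tfrac{e^{-\beta x}}{\mu+\beta}\bigr]_{\beta=\beta_i}$ by differentiating $\int_x^\i e^{-\mu(z-x)-\beta z}\,dz=e^{-\beta x}/(\mu+\beta)$ under the integral sign; (c) write $\psi(\mu)=\psi(0)\,\Pi_{i=1}^m(1+\mu/\beta_i)\big/\Pi_{j=1}^n(1+\mu/\gamma_j)$. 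Assembling (a)--(c), the two factors $\psi(0)$ cancel and the Laplace transform of $X_{T^x}-x$ becomes precisely the right-hand side of the announced identity. That function is a proper rational function of $\mu$ — it is the transform of a sub-probability measure, hence bounded as $\mu\to\i$ — whose only poles are the $-\gamma_j$, because the potential poles at the $-\beta_i$ are killed by the zeros of $\Pi_i(1+\mu/\beta_i)$; its partial-fraction expansion defines $c_0(x)$ and the $c_j(x)$, and inverting the Laplace transform as in Part 1 yields the law of $X_{T^x}-x$.

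\textbf{Where the effort lies.} No step is deep; the substance is the combinatorial bookkeeping of multiplicities through the two partial-fraction expansions (matching the orders $m_i+1$ and $n_j+1$ against the repetition conventions for the $\beta_i$ and the $\gamma_j$), and checking in step (c) that $\Pi_i(1+\mu/\beta_i)$ exactly cancels every pole that the Leibniz expansion of $\tfrac{\partial^{m_i}}{(-\partial\beta)^{m_i}}[e^{-\beta x}/(\mu+\beta)]$ produces at a $-\beta_i$. I would also flag the degenerate case $\psi(0)=0$, i.e. $\phi(0)=0$ and $\phi'(0)\ge0$: there $M=+\i$ a.s., so part 1) is vacuous, and the formula of part 2) has to be read either as the limit of the displayed identity as $\phi(0)\downarrow0$ or, better, re-derived directly from $\E(e^{-\mu(X_{T^x}-x)}1_{T^x<\z})=\psi(\mu)\int_{]x,\i[}e^{-\mu(z-x)}V(dz)$, which involves no division by $\psi(0)$.
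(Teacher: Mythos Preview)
Your proposal is correct and follows essentially the same route as the paper: the paper simply says ``using the dual properties settled in this proposition which involves the positive Wiener-Hopf factor $\psi$ instead of the negative one $\cc\psi$ and using the explicit form of $\psi$ given in theorem \ref{WH}, we obtain\dots'', and your argument is precisely a careful unpacking of that sentence --- duality to $\hat X=-X$, the explicit rational $\psi$, partial fractions, and termwise Laplace inversion. Your handling of the multiplicity bookkeeping and of the degenerate case $\psi(0)=0$ is more explicit than anything the paper writes down.
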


Corollary \ref {Flu+} and Proposition \ref {Flu-}  give us a foretaste of the explicit results that we obtain in the sequel : Most of the explicit distributions that we will obtain will be of the preceding form or of convolution of measures of that form;  that is, a combination of 
exponential or gamma distributions with explicit coefficients, possibly  restricted to an interval,  and convoluted with (a possible restriction of) the measure $\cc U(dy)$.

 The next proposition introduces Laplace transforms of distributions that will be involved in the sequel. The easy proof is left to the reader. 
   
 \begin{proposition} \label{inf}
 
 1) $${\bf m}_i(x,\l):={\partial^{m_i} \over  (-\partial \beta)^{m_i}}\Bigl[{e^{(\l+\beta) x}\cc U_{[-x,0]}(-\beta)-\cc U_{[-x,0]}(\l) \over \l+\beta}\Bigr]_{\beta=\beta_i} $$ is the Laplace  transform of the measure $$\biggl((1_{y\in [0,x]}y^{m_i}e^{-\beta_i y}dy)*(1_{y\in [-x,0]}\cc U(dy))\biggr)1_{y\in [-x,0]}$$
 
2)  $${\bf n}_j(x,\l):= {\partial^{n_j} \over (-\partial \gamma)^{n_j}}\Bigl[{e^{-\l x}\cc U_{]-\i,-x[}(\l)- e^{\gamma x}\cc U_{]-\i,-x[}(-\gamma)\over \l+\gamma}\Bigr]_{\gamma=\gamma_j}$$ is the Laplace  transform of the measure 
$$\biggl( (y^{n_j} e^{-\gamma_j y}1_{y>0}) * (\cc U(dy-x)1_{y<0})\biggr)1_{y<0} $$

3) $${\bf  i}_j(\l)={\partial^{n_j}\over (-\partial \gamma)^{n_j}}\Bigl[{1\over \l+\gamma}\Bigr]_{\gamma=\gamma_j}$$ is the Laplace transform of the measure 
$$y^{n_j} e^{-\gamma_j y}1_{y>0}dy$$
 
\end{proposition}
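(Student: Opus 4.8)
The statement to prove is Proposition~\ref{inf}, which asserts that three explicitly-written functions are Laplace transforms of three explicitly-written measures. Since the author says "the easy proof is left to the reader", I should give a plan that confirms this is indeed a routine verification.

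The plan is to verify each of the three identities by recognizing the right-hand side measure, computing its Laplace transform directly, and matching it to the claimed formula. For part 3, I would start from the elementary fact that $y^{n}e^{-\gamma y}1_{y>0}dy$ has Laplace transform $\int_0^\infty e^{-\l y}y^{n}e^{-\gamma y}dy = n!/(\l+\gamma)^{n+1}$, and observe that $\frac{\partial^{n_j}}{(-\partial\gamma)^{n_j}}\bigl[\frac{1}{\l+\gamma}\bigr] = \frac{n_j!}{(\l+\gamma)^{n_j+1}}$; differentiating under the integral sign in $\gamma$ the identity $\int_0^\infty e^{-(\l+\gamma)y}dy = 1/(\l+\gamma)$ shows that $\frac{\partial^{n_j}}{(-\partial\gamma)^{n_j}}\bigl[\frac{1}{\l+\gamma}\bigr]$ is the Laplace transform of $y^{n_j}e^{-\gamma y}1_{y>0}dy$ evaluated at $\gamma=\gamma_j$. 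This gives part 3 immediately and also supplies the differentiation-in-parameter device used throughout.

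For parts 1 and 2, I would proceed in the order: first handle the case $m_i=0$ (respectively $n_j=0$), then recover the general case by differentiating $m_i$ (respectively $n_j$) times with respect to the parameter $\beta$ (respectively $\gamma$) and invoking the same differentiation-under-the-integral justification. In part~1, with $m_i=0$, the target measure is the convolution of $1_{[0,x]}(y)e^{-\beta y}dy$ with $1_{[-x,0]}(y)\cc U(dy)$, then restricted to $[-x,0]$. The Laplace transform of a convolution is the product of the Laplace transforms, i.e. $\bigl(\int_0^x e^{-(\l+\beta)y}dy\bigr)\cdot \cc U_{[-x,0]}(\l) = \frac{1-e^{-(\l+\beta)x}}{\l+\beta}\,\cc U_{[-x,0]}(\l)$; the main bookkeeping point is to see that the extra restriction to $[-x,0]$ exactly cancels the part of the convolution supported on $[-x-x,-x)\cup(0,x]$ wait — more precisely one must check that the term $\frac{e^{(\l+\beta)x}\cc U_{[-x,0]}(-\beta) - \cc U_{[-x,0]}(\l)}{\l+\beta}$ is precisely the Laplace transform of the convolution truncated to $[-x,0]$, which follows by writing the convolution explicitly as $\int_{[-x,0]}\bigl(\int_0^x 1_{\{u+v\le 0,\ u+v\ge -x\}}e^{-\beta u}du\bigr)\cc U(dv)$ — wait, I should present this cleanly: it is the Laplace transform of the full convolution minus the Laplace transform of its restriction to $(0,x]$, and the latter restriction, evaluated via Fubini, produces the $e^{(\l+\beta)x}\cc U_{[-x,0]}(-\beta)$ term. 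Part~2 is entirely analogous, with the roles of "positive tail" and "negative tail" swapped and $\cc U$ replaced by its shift $\cc U(dy-x)1_{y<0}$, using $\cc U_{]-\i,-x[}(\l)$ as the relevant Laplace transform of the tail of $\cc U$.

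I expect the only genuinely delicate point to be the domain-of-convergence bookkeeping: $\cc U_{]-\i,-x[}(\l)$ converges only for $\Re(\l)<0$ (as noted in the excerpt), whereas $\cc U_{[-x,0]}(\l)$ converges for all $\l\in\C$ since it is a finite measure on a bounded set, so in part~1 the resulting Laplace transform is entire while in part~2 it is only defined on $\{\Re(\l)<0\}$, consistent with the supports $[-x,0]$ and $]-\i,0[$ of the respective target measures. Everything else is the product rule for Laplace transforms of convolutions, Fubini, and differentiation under the integral sign in the parameters $\beta,\gamma$, so the proof is indeed a routine computation and can be safely left to the reader as the author states.
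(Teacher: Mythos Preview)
Your plan is exactly the routine verification the author has in mind: compute the Laplace transform of each target measure directly via Fubini, establish the base case $m_i=0$ (resp.\ $n_j=0$), and then recover the general case by differentiating under the integral sign with respect to the parameter $\beta$ (resp.\ $\gamma$). Since the paper omits the proof entirely, there is nothing further to compare; your approach is the intended one.

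One remark on execution: your sketch for part~1 wavers (the two ``wait''s), and the specific bookkeeping claim you end with---that the restriction to $(0,x]$ ``produces the $e^{(\l+\beta)x}\cc U_{[-x,0]}(-\beta)$ term''---does not come out quite that way when you actually carry out the Fubini step. If you write the truncated convolution as
\[
\int_{-x}^{0} e^{-\l y}\Bigl(\int_{[-x,y]} e^{-\beta(y-v)}\,\cc U(dv)\Bigr)\,dy
\]
and swap the order of integration, you get a clean closed form directly, without needing to subtract the $(0,x]$ piece from the full convolution. Do that computation once cleanly rather than arguing by difference, and the identity (and its analogue in part~2) drops out in two lines.
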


\section{Some matrices of coefficients}

Recall first all of the notations already introduced.
$$-\gamma_1,\dots, -\gamma_n$$ are the poles of $\phi$ (and $\psi$) located on the half plane $\{\l, \Re(\l)<0\}$,
and $$n_j=\sharp\{k<j;\gamma_k=\gamma_j\}$$

$$-\beta_1,\dots, -\beta_m$$ are the roots of $\phi$ (and $\psi$) located on the half plane $\{\l, \Re(\l)<0\}$ together with $0$ is $\phi(0)=0$ and $\phi'(0)>0$. 
and $$m_i=\sharp\{k<i;\beta_k=\beta_i\}$$

We will denote by ${\bf l}$ and ${\bf c}$ the line and the column :

$${\bf l}=(1_{n_j=0}; 1\leq j\leq n).$$

$${\bf c}= (1_{m_i=0}; 1\leq i\leq m)^t.$$

The ${\bf m}_i(x,\l)$ and ${\bf n}_j(x,\l)$ and ${\bf  i}_j(\l)$ being defined in the previous proposition \ref{inf}, we denote  by  ${\bf m} (x,\l)$ the column  
$${\bf m} (x,\l)=({\bf m}_i(x,\l); 1\leq i\leq m)^t$$

and by ${\bf n} (x,\l)$ the line
$${\bf n} (x,\l)=({\bf n}_j(x,\l); 1\leq j\leq n)$$

and by ${\bf i}(\l)$ the line 
$${\bf i} (\l)=({\bf i}_j(\l); 1\leq j\leq n)$$

Also, define the line ${\bf v}(x)$, 

$${\bf v}(x):=\biggl(-\Bigl[{\partial^{n_j} \over(-\partial \gamma)^{n_j} } e^{\gamma x} \cc U_{]-\i,-x[}(-\gamma )\Bigr]_{\gamma=\gamma_j};  1\leq j\leq n\biggr), $$

and ${\bf w}(x)$ be the column,  

$${\bf w}(x):=\biggl(\Bigl[{\partial^{m_i} \over  (-\partial \beta)^{m_i}}e^{\beta_i x}\cc U_{[-x,0]}(-\beta)\Bigr]_{\beta=\beta_i};1\leq i\leq m\biggr)^t$$

\begin{proposition} \label{ccu}
If $m=n+1$, then the limit 
$$\cc u(-x):=\lim_{\l\to -\i} (-\l)e^{-\l x}\cc U_{]-\i,-x[}(\l)$$ exists in $[0,+\i[$ for all $x\in ]0,+\i[$.
\end{proposition}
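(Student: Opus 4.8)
The plan is to read the limit $\lim_{\l\to-\i}(-\l)e^{-\l x}\cc U_{]-\i,-x[}(\l)$ as a statement about the tail of the measure $\cc U$ near $-x$, and to exploit the explicit rational form of $\psi$ available under Assumption \ref{Ass} together with the hypothesis $m=n+1$. Recall that $m=n+1$ means (by the Remark following Theorem \ref{WH}) that $\psi$ is the exponent of a subordinator with strictly positive drift; dually, $\cc\psi$ is then the exponent of the opposite of a subordinator whose underlying subordinator also has a strictly positive drift, say drift $\d>0$. By the classical creeping criterion (Theorem 19, Chapter 6 of \cite{B96}), this positive drift forces $\cc U$ to have an absolutely continuous part near $0$: more precisely, ${1\over\cc\psi(\l)}\sim {1\over \d(-\l)}$ as $\l\to-\i$ along the real axis, so $\cc U(dy)$ has a bounded density $\cc u$ with respect to Lebesgue measure in a neighbourhood of $0$ (in fact on all of $]-\i,0]$, or at least its restriction does), with $\cc u(0^-)=1/\d$. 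This is the key structural input.

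Granting that $\cc U_{]-\i,0]}(dy)=\cc u(y)\,dy$ on a left-neighbourhood of $0$ with $\cc u$ bounded, I would then compute directly. Write
$$
e^{-\l x}\cc U_{]-\i,-x[}(\l)=\int_{]-\i,-x[}e^{-\l(y+x)}\cc U(dy)=\int_{]-\i,0[}e^{-\l t}\cc u(t-x)\,dt,
$$
after the substitution $t=y+x$, valid for $\Re(\l)<0$ and $x>0$ (using that the translate lands in the region where $\cc U$ has the bounded density $\cc u$, or handling the singular part of $\cc U$ separately — see the obstacle below). Multiplying by $(-\l)$ and letting $\l\to-\i$, this is a standard Abelian/Tauberian evaluation of a Laplace transform at $-\i$: $(-\l)\int_{]-\i,0[}e^{-\l t}\,g(t)\,dt\to g(0^-)$ whenever $g$ is bounded and left-continuous at $0$ (or has a limit there). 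Hence the limit exists and equals $\cc u(0^-\!-x)=\cc u(-x)$, which lies in $[0,+\i[$ because $\cc u$ is a nonnegative density. This both proves existence and identifies the limit, justifying the notation $\cc u(-x)$.

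The main obstacle is controlling the possible singular part of $\cc U$ away from $0$: for $x>0$ the translated integration region $]-\i,-x[$ reaches values of $\cc U$ well inside $]-\i,0[$, where a priori $\cc U$ need not be absolutely continuous. What saves the argument is that only the behaviour of $\cc U$ near $-x$ matters for the limit as $\l\to-\i$ — the weight $(-\l)e^{-\l t}$ concentrates near $t=0$, i.e.\ near $y=-x$ — and near any point of $]-\i,0[$ the positive-drift structure again gives $\cc U$ a locally bounded density. One clean way to make this rigorous: split $\cc U={\d}^{-1}\,dy + \cc U_{\mathrm{sing}}$ where $\cc U_{\mathrm{sing}}$ corresponds to ${1\over\cc\psi(\l)}-{1\over\d(-\l)}$, which is $O(\l^{-2})$ at $-\i$ and is the Laplace transform of a measure; then $(-\l)e^{-\l x}$ times the $\cc U_{\mathrm{sing}}$-contribution is easily seen to have a finite limit (indeed one reduces to an ordinary Laplace transform $\int e^{-\l t}h(t)\,dt$ with $h$ integrable), while the absolutely continuous part ${\d}^{-1}\,dy$ contributes ${\d}^{-1}e^{-\l x}\cdot{(-\l)\over(-\l)}$-type terms whose limit is immediate. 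Assembling the two pieces and checking nonnegativity of the resulting density gives the claim; the bookkeeping is routine once the split is in place, so I would state the drift decomposition as the crux and keep the estimates terse.
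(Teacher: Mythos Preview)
Your strategy---reduce the limit to the value $\cc u(-x)$ of a bounded density of $\cc U$ via an Abelian argument---is sound in outline, but the step you flag as ``the key structural input'' fails. You assert that $m=n+1$ forces the subordinator underlying $\cc\psi$ to have strictly positive drift, i.e.\ $\cc\psi(\l)/(-\l)\to \d>0$ as $\l\to-\i$. This is not true. Since $\psi(\l)\sim\l$ when $m=n+1$, one has $\cc\psi(\l)=\phi(\l)/\psi(\l)\sim \phi(\l)/\l$; Theorem~\ref{WH} only gives $\lim_{\l\to-\i}\phi(\l)/\l\in\,]0,+\i]$, so $\cc\psi(\l)$ may tend to a \emph{finite} positive constant. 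Concretely, take $X_t=t+\hbox{(compound Poisson with exponential positive jumps)}-\hbox{(compound Poisson with negative jumps)}$: then $m=n+1$ (positive drift, bounded variation), $X$ creeps upward but reaches negative levels only by jumping, so the descending ladder height process is compound Poisson with zero drift and $\cc\psi$ is bounded. Your decomposition $\cc U=\d^{-1}dy+\cc U_{\mathrm{sing}}$ and the estimate ${1\over\cc\psi(\l)}-{1\over\d(-\l)}=O(\l^{-2})$ both collapse in this regime.

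What remains true is that $1_{y<0}\cc U(dy)$ has a density of bounded variation when $m=n+1$, but the paper obtains this only \emph{a posteriori} (see the remark after Proposition~\ref{ccu} and Remark~4 after Corollary~\ref{BccCco}); it is not available to you as an input. The paper's own argument is entirely different and avoids any density statement: it builds the functions $\cc A$, $\cc C$ of Proposition~\ref{funct} via the matrix factorisation of \cite{F10}, extracts polynomials $P_2,Q_2$ with $P_2(x,\l)\sim\l^{n+1}$ and $\deg Q_2\le n$ (Proposition~\ref{poly}), and then reads off from the boundedness of $e^{-\l x}\cc C(x,\l)$ that ${P_2(x,\l)e^{-\l x}\cc U_{]-\i,-x[}(\l)\over\Pi(\l+\gamma_j)}$ has a finite limit, whence $\l e^{-\l x}\cc U_{]-\i,-x[}(\l)$ does. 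If you want a self-contained direct proof along your lines, you would need an independent argument that $\cc U$ has a locally bounded density on $]-\i,0[$ under $m=n+1$; this is true but requires work (e.g.\ via creeping of $X$ upward and the resolvent density, as in the lemma the paper alludes to), and is not the ``duality'' you invoke.
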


This proposition will be proved incidently here as a consequence of our computations. In fact, it can be shown that  the function $\cc u$ is a density of the measure $1_{y<0}\cc U(dy)$ and we will see that this density is of bounded variations (see remark 4 after corollary  \ref {BccCco})

If $m=n+1$, we will denote ${\bf w}'(x)$ the column $\Bigl({\bf w}'_i(x); 1\leq i\leq m\Bigr)^t$, with 

$${\bf w}'_i(x):={\partial ^{m_i}\over (-\partial \beta)^{m_i}}\Bigl[ \beta e^{\beta x} \cc U_{[-x,0]}(\beta)+\cc u(-x)\Bigr]_{\beta=\beta_i}$$

Notice that if $\cc u$ is continuous, ${\bf w}'_i(x)$ is the left derivative of ${\bf w}_i(x)$, hence the notation.

Let now ${\bf W}(x)$ be the matrix $(m,n)$ : 

$${\bf W}(x)=({\bf W}_{i,j}(x); 1\leq i\leq m, 1\leq j\leq n)$$

where,

$${\bf W}_{i,j}(x):={\partial^{m_i}\partial^{n_j}\over(-\partial \beta)^{m_i} (-\partial \gamma)^{n_j}}\Bigl[{e^{\beta x}U_{[-x,0]}(-\beta)+ e^{\gamma x} \cc U_{]-\i,-x[}(-\gamma)\over \beta-\gamma}\Bigr]_{\beta=\beta_i,\gamma=\gamma_j}$$

When  $m=n+1$, denote $ {\bf \tilde W}(x)$ the square matrix 

$${\bf \tilde W} (x)=\left [\begin{array}{cc} {\bf w}(x) & {\bf W}(x)\end{array}\right]$$

We will  show later the next proposition

Let  ${\bf W}$ be any  matrix and let  ${\bf f}$ be any line, the dimension of  ${\bf f}$ being  equal to the number of columns of ${\bf W}$, ${\bf W}_i^{\bf f}$ will be  the matrix  obtained from $\bf W$ by replacing the  $i$-th line by $\bf f$. And ${\bf W}_i$ will denote  the matrix obtained from $\bf W$ by just taking it's $i$-th line off. 

Similarly, let ${\bf e}$ be a column whose  dimension  is equal to the number of lines of $W$,   ${\bf W}_{\bf e}^j$ will 
refer to   the matrix obtained from ${\bf W}$ by substituting ${\bf e}$ to  it's $j$-th column.

\section{The distribution of $(S_{\z}, X_{\z}, I_{\z})$}

\begin{theorem}\label{AccA}

The potential kernel  of the triple $(S_t, X_t, I_t)$ is characterized by the following  identities. 
For all $\l_1,\l_2 \in {\bf C}$,  $x\in ]0,+\i[$,  

$$\E( \int _0^{\z}e^{-\l_1 I_t} 1_{S_t-I_t\leq x} e^{-\l_2 (X_t-I_t)} dt)=\E( \int _0^{\z}e^{-\l_2 S_t}1_{S_t-I_t\leq x} e^{-\l_1 (X_t-S_t)} dt)$$
$$={\left |\begin{array} {cc} \cc U_{[-x,0]}(\l_1) & {\bf v} (x)  \\ {\bf  m}(x,\l_1)
& {\bf W}(x) \end{array} \right |\over |{\bf W}(x)|}{\left |\begin{array} {cc} 1 & {\bf l } \\ e^{-\l_2 x} {\bf  m}(x,\l_2)&  {\bf W}(x) 
\end{array} \right |\over |{\bf W}(x)|} \quad \hbox {if}\quad m=n$$

$$={ \left |\begin{array} {ccc} \cc U_{[-x,0]}(\l_1)  & \cc u(-x) & {\bf v}(x) \\ 
{\bf  m}(x,\l_1) &   {\bf w} (x) & {\bf W}(x) \end{array}\right |  \over | {\bf \tilde W}(x) |}{\left |\begin{array} {cc}e^{-\l_2  x} {\bf m}(x,\l_2) & {\bf W}(x)   \end{array} \right |\over |{\bf \tilde W}(x) | }\quad \hbox{if} \quad m=n+1$$

\end{theorem}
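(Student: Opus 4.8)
The plan is to derive Theorem~\ref{AccA} from the main identity of \cite{F10} (recalled in section~9) describing the potential kernel of the triple $(S_t,X_t,I_t)$, specialized to our class of L\'evy processes. First I would observe that the left-hand equality --- the symmetry exchanging the roles of the supremum and the infimum --- is a general duality statement: applying the time-reversal/duality identity for L\'evy processes to $\hat X=-X$ interchanges $S$ and $I$ and sends the pair $(\l_1,\l_2)$ to $(\l_2,\l_1)$, so it suffices to prove one of the two displayed closed forms and quote duality for the other. Concretely, I would set up the two "half" quantities, say $A(x,\l_1)=\E(\int_0^\z e^{-\l_1 I_t}1_{S_t-I_t\le x}dt)$-type expressions built from the negative factor $\cc\psi$ (equivalently $\cc U$) and $B(x,\l_2)$ built from the positive factor $\psi$, and show that the product $A\cdot B$ gives the claimed determinantal formula.

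The core analytic step is to insert the explicit rational form of $\psi$ from Theorem~\ref{WH}, namely $\psi(\l)=\Pi_1^m(\l+\beta_i)/\Pi_1^n(\l+\gamma_j)$ with $\psi_\i=1$, into whatever Wiener--Hopf-type expression section~9 provides for the potential kernel. The quantities $\cc U_{[-x,0]}$, $\cc U_{]-\i,-x[}$, ${\bf m}_i$, ${\bf n}_j$, ${\bf v}$, ${\bf w}$ and ${\bf W}$ from Proposition~\ref{inf} and section~3 are precisely the building blocks one obtains by splitting $\cc U(dy)$ at $-x$ and convolving with the gamma/exponential pieces coming from the poles $-\gamma_j$ and zeros $-\beta_i$. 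I would then recognize the resulting expression as a system of linear equations: the "unknown" is the restriction of (a convolution of) $\cc U$ to $[-x,0]$ paired against test functions, and the $m$ conditions come from matching the $m$ zeros $-\beta_i$ (with multiplicities $m_i$, handled by the $\partial^{m_i}/(-\partial\beta)^{m_i}$ operators) --- this is exactly where the determinants $|{\bf W}(x)|$ (when $m=n$) or $|{\bf \tilde W}(x)|$ (when $m=n+1$) arise, by Cramer's rule. The extra column $\cc u(-x)$, resp. the extra row and the need for Proposition~\ref{ccu}, in the $m=n+1$ case reflects the creeping term / the positive drift of $\psi$, which contributes one more degree of freedom (the atom at a boundary, or the density value $\cc u(-x)$) that must be solved for simultaneously.

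In more detail, I would proceed as follows. Step 1: write the section~9 formula for $\E(\int_0^\z e^{-\l_1 I_t}1_{S_t-I_t\le x}e^{-\l_2(X_t-I_t)}dt)$ as a product of a "minimum part" and a "maximum-shifted-by-the-running-minimum part", using the Wiener--Hopf decomposition of the bivariate renewal structure of $(S-I,X-I)$. Step 2: using Proposition~\ref{Flu-} and Corollary~\ref{Flu+}, express each part through $\cc U$ restricted appropriately to $[-x,0]$ and $]-\i,-x[$ and through the gamma densities at the $\gamma_j$'s and $\beta_i$'s; the factor $e^{-\l_2 x}$ in front of ${\bf m}(x,\l_2)$ records the shift by $x$ at the exit from the strip. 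Step 3: the unknown restricted measure satisfies $m$ linear constraints obtained by testing against $e^{\beta_i y}$ and its $\beta$-derivatives (the "Bargmann-type" equations alluded to in the introduction); solve by Cramer's rule to get the determinantal quotients, checking that the matrix ${\bf W}(x)$ (resp. ${\bf \tilde W}(x)$) is exactly the coefficient matrix. Step 4: treat $m=n+1$ separately, establishing Proposition~\ref{ccu} along the way by noting that $(-\l)e^{-\l x}\cc U_{]-\i,-x[}(\l)$ converges as $\l\to-\i$ precisely because $\psi$ then has a positive drift so $1/\psi$ decays like $1/\l$; this furnishes the extra scalar $\cc u(-x)$ and the extra column/row.

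The main obstacle I expect is Step 3, i.e.\ identifying the linear system correctly and proving that its coefficient determinant $|{\bf W}(x)|$ (or $|{\bf \tilde W}(x)|$) is nonzero so that Cramer's rule applies --- equivalently, that the $m$ constraint functionals are independent. This nonvanishing should follow from the fact that the $\beta_i$ are genuine zeros of $\psi$ and the associated convolution kernels (gamma densities at distinct rates, with confluent versions for repeated $\beta_i$) are linearly independent as functions on $[0,x]$, but making this rigorous while simultaneously bookkeeping the multiplicities $m_i$ through the differential operators $\partial^{m_i}/(-\partial\beta)^{m_i}$ is the delicate part. A secondary difficulty is the careful justification, in the $m=n+1$ case, that the limit defining $\cc u(-x)$ exists and enters the system as claimed; this is where the analytic estimates on $\cc\psi$ near $-\i$ from the proof of Theorem~\ref{WH} are reused.
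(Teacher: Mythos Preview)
Your high-level plan---use the factorization from \cite{F10} (Proposition~\ref{funct}) to write the potential kernel as $\cc A(x,\l_1)A(x,\l_2)$ and then compute each factor by linear algebra---is the paper's route. But two key mechanisms in the paper are either absent from or garbled in your outline, and the obstacle you flag as hardest is in fact bypassed by a soft argument.

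First, you skip the intermediate polynomial reduction (Proposition~\ref{poly}). The paper does not pass directly from Proposition~\ref{funct} to a linear system via Propositions~\ref{Flu-}/\ref{Flu+}; instead it sets up a $2\times 2$ matrix Riemann--Hilbert problem (identity~(\ref{WHM})) and exhibits an explicit solution $N(x,\l)$, so that $N M^{-1}$ is entire and, by the growth bounds of Proposition~\ref{behav}, polynomial. This yields the representations $A=(P_1+Q_1 e^{-\l x}\cc U_{[-x,0]})/\Pi(\l+\beta_i)$, etc., with four polynomials $P_1,Q_1,P_2,Q_2$ of prescribed degrees. Everything afterwards is algebra on these polynomials; without this step your ``Step~2'' has no concrete object to manipulate.

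Second, the linear system is not obtained by ``testing the unknown measure against $e^{\beta_i y}$''. The roles of the $\beta_i$ and the $\gamma_j$ are opposite to what you wrote: the $\beta_i$ determine the \emph{unknowns} (since $A$ is entire, the would-be poles at $-\beta_i$ must cancel, which forces the ansatz $A=a_0-\sum a_i e^{-\l x}{\bf m}_i(x,\l)$ with $m$ coefficients $a_i$), while the $\gamma_j$ supply the \emph{equations}. The crucial link is that $A$ and $\cc B$ share the same polynomials $P_1,Q_1$; requiring $\cc B$ to be holomorphic at each $-\gamma_j$ gives $n$ linear equations $\sum_i a_i{\bf W}_{i,j}={\bf l}_j$ (or $=0$), and when $m=n+1$ the degree condition on $P_1$ supplies the extra equation $\sum_i a_i{\bf w}_i=-1$. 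The same scheme with $(\cc A,\cc C,P_2,Q_2)$ yields the other determinant. Your description conflates the two sets of exponents.

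Finally, your ``main obstacle''---nonvanishing of $|{\bf W}(x)|$ or $|{\bf\tilde W}(x)|$---is handled in the paper not by proving linear independence of gamma densities but by a uniqueness argument: the six functions of Proposition~\ref{funct} are unique, hence so are $P_1,Q_1$; any solution of the linear system produces a valid pair $(P_1,Q_1)$; therefore the solution is unique and the system is Cramer. This dissolves the difficulty you anticipated.
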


{\bf Notations}. Let,  if $m=n$, 
$$\cc a(x):=\left |\begin{array} {cc} \cc U_{[-x,0]}(0) & {\bf v} (x)  \\ {\bf  m}(x,0)
& {\bf W}(x) \end{array} \right |\qquad a(x):=\left |\begin{array} {cc} 1 & {\bf l} \\ {\bf  m}(x,0)&  {\bf W}(x) 
\end{array} \right | \qquad r(x):=|{\bf W}(x)|  $$

And, if $m=n+1$,

$$\cc a(x):= \left |\begin{array} {ccc} \cc U_{[-x,0]}(0)  & \cc u(-x) & {\bf v}(x) \\ 
{\bf  m}(x,0) &   {\bf w} (x) & {\bf W}(x) \end{array}\right |\qquad a(x):= \left |\begin{array} {cc} {\bf m}(x,0) & {\bf W}(x)   \end{array} \right |\qquad r(x):=|{\bf \tilde W}(x)|$$

\begin{corollary} \label{AccAcor}

If $\phi(0)>0$, then the law of the triple $(S_{\z}, X_{\z}, I_{\z})$ is characterized as follow :

 $${1\over \phi(0)}\P(S_{\zeta}-I_{\zeta}\leq x)={\cc a(x) a(x)\over r^2(x)}. $$
 
Given $S_{\zeta}-I_{\zeta}\leq x$, the random variable  $X_{\z}-I_{\z}$  is independent of $I_{\z}$ and has the same distribution as $S_{\z}$ and

 $$\P(I_{\z}\in dy\vert S_{\z}-I_{\z}\leq x)={1_{y\in [-x,0]}\over\cc a(x)}\biggl(\cc a_0 \cc U(dy)- [1_{y\in [-x,0]}\cc U(dy)]*[\sum_{i=1}^m \cc a_i(x) y^{m_i} e^{-\beta_i y}1_{y\in [0,x]}dy]\biggr)$$
 
$$\P(S_{\z}\in dy \vert S_{\z}-I_{\z}\leq x)={1_{y\in [0,x]}\over a(x)}\biggl(a_0\delta_0(dy)- [1_{y\in [0,x]}\cc U(dy-x)]*[ \sum_{i=1}^m a_i(x)y^{m_i} e^{-\beta_i y} 1_{y\in [0,x]}dy]\biggr) $$

with, if  $m=n$, 
$$\cc a_0=a_0=1$$
$$\cc a_i(x)={|{\bf W}_i^{{\bf v}(x)}(x) |\over |{\bf W}(x)|}\qquad a_i(x)={|{\bf W}^{\bf l}_i(x)|\over |{\bf W}(x)|},\qquad i=1\dots m$$

  And, if $m=n+1$,
 
$$\cc a_0=1\qquad a_0=0$$ 
 $$\cc a_i(x)= {|{\bf \tilde  W}(x) _i^{[ \cc u(-x), {\bf v} (x)]}(x) |\over  |{\bf\tilde W}(x)||}\qquad a_i(x)=-{ |{\bf W}_i(x)| \over |{\bf\tilde W}(x)|} \qquad i=1\dots m$$
 
 The identities are  still true  when $\phi(0)=0$ and when $\displaystyle  {1\over\phi(0)}.\P(S_{\z}-I_{\z}\leq x),$
 $$ \P(S_{\z}\in dy\vert S_{\z}-I_{\z}\leq x)\qquad \P(I_{\z}\in dy\vert S_{\z}-I_{\z}\leq x)$$ are replaced by $\displaystyle  \int_0^{+\i}\P(S_t-I_t\leq x)dt, $
 
 $${ \int_0^{+\i}\P(S_t-I_t\leq x, S_t\in dy))dt\over \int_0^{+\i}\P(S_t-I_t\leq x)dt}\qquad { \int_0^{+\i}\P(S_t-I_t\leq x, I_t\in dy)dt\over \int_0^{+\i}\P(S_t-I_t\leq x)dt}.$$

  \end{corollary}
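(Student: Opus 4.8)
The plan is to obtain the corollary from Theorem \ref{AccA} by two elementary moves: replacing the occupation integral $\int_0^\z(\cdots)\,dt$ by the value of the functional at the killing time $\z$, and then expanding the two determinant factors by a Cramer-type cofactor computation before inverting the resulting Laplace transforms with the help of Proposition \ref{inf}.

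\emph{Step 1: from the potential kernel to the law at $\z$.} When $\phi(0)>0$ the life time $\z$ is exponential with parameter $\phi(0)$ and independent of the driving L\'evy data, so for any bounded path functional $(g_t)$ one has $\E(\int_0^\z g_t\,dt)=\int_0^{\i}e^{-\phi(0)t}\E(g_t)\,dt=\frac1{\phi(0)}\E(g_\z)$. Taking $g_t=e^{-\l_1 I_t-\l_2(X_t-I_t)}1_{S_t-I_t\le x}$ and then $g_t=e^{-\l_2 S_t-\l_1(X_t-S_t)}1_{S_t-I_t\le x}$, Theorem \ref{AccA} becomes
\[
\tfrac1{\phi(0)}\E\bigl(e^{-\l_1 I_\z-\l_2(X_\z-I_\z)}1_{S_\z-I_\z\le x}\bigr)=\tfrac1{\phi(0)}\E\bigl(e^{-\l_2 S_\z-\l_1(X_\z-S_\z)}1_{S_\z-I_\z\le x}\bigr)=A(\l_1,x)\,B(\l_2,x),
\]
where $A(\l_1,x)$ and $B(\l_2,x)$ are the first and second determinant ratios of Theorem \ref{AccA} in the relevant case ($m=n$ or $m=n+1$), and $A(0,x)=\cc a(x)/r(x)$, $B(0,x)=a(x)/r(x)$ by the definitions gathered under ``Notations''.

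\emph{Step 2: the structural assertions.} Putting $\l_1=\l_2=0$ gives $\tfrac1{\phi(0)}\P(S_\z-I_\z\le x)=A(0,x)B(0,x)=\cc a(x)a(x)/r^2(x)$, which is the first displayed formula. Dividing the identity above by this shows that the conditional joint Laplace transform of $(I_\z,X_\z-I_\z)$ given $S_\z-I_\z\le x$ splits into a function of $\l_1$ times a function of $\l_2$; hence, on $\{S_\z-I_\z\le x\}$, $I_\z$ and $X_\z-I_\z$ are independent, with conditional Laplace transforms $A(\l_1,x)/A(0,x)$ and $B(\l_2,x)/B(0,x)$ respectively. Since the \emph{same} product $A(\l_1,x)B(\l_2,x)$ also equals $\tfrac1{\phi(0)}\E(e^{-\l_2 S_\z-\l_1(X_\z-S_\z)}1_{\cdots})$, setting $\l_1=0$ there yields $\E(e^{-\l_2 S_\z}\mid S_\z-I_\z\le x)=B(\l_2,x)/B(0,x)$, so $X_\z-I_\z$ and $S_\z$ have the same conditional law (and, symmetrically, so do $X_\z-S_\z$ and $I_\z$). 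This settles all the qualitative claims.

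\emph{Step 3: explicit inversion, and $\phi(0)=0$.} It remains to identify the two one-variable transforms. Expanding each determinant along its first column by multilinearity rewrites $A(\l_1,x)/A(0,x)$ as $\cc a(x)^{-1}$ times a linear combination of $\cc U_{[-x,0]}(\l_1)$ and of the ${\bf m}_i(x,\l_1)$, with coefficients that are precisely the $\cc a_0$ and $\cc a_i(x)$ of the statement---in the case $m=n+1$ the extra column $\cc u(-x),{\bf w}(x)$ enters, which is why $a_0=0$ there---and likewise for $B(\l_2,x)/B(0,x)$, where the factor $e^{-\l_2 x}$ translates supports from $[-x,0]$ onto $[0,x]$ and brings in $\cc U(dy-x)$. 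By Proposition \ref{inf}, $\cc U_{[-x,0]}(\l)$ is the Laplace transform of $1_{[-x,0]}\cc U(dy)$ and ${\bf m}_i(x,\l)$ that of $\bigl((1_{[0,x]}y^{m_i}e^{-\beta_i y}\,dy)*(1_{[-x,0]}\cc U(dy))\bigr)1_{[-x,0]}$; substituting these and regrouping the convolutions produces the announced densities of $\P(I_\z\in dy\mid\cdots)$ and $\P(S_\z\in dy\mid\cdots)$, the normalizations being confirmed by evaluation at $\l=0$. For $\phi(0)=0$ nothing new is needed, since Theorem \ref{AccA} holds without any hypothesis on $\phi(0)$: one simply keeps $\int_0^{\i}\P(\cdots)\,dt=\E(\int_0^\z\cdots\,dt)$ wherever $\tfrac1{\phi(0)}\E(\cdots)$ appeared (equivalently, let $\phi(0)\downarrow0$). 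The main obstacle lies entirely in this step: carrying out the cofactor expansions so that the coefficients match $a_i(x),\cc a_i(x)$ exactly, keeping all signs and both cases $m=n$, $m=n+1$ under control, and recognizing, after the substitution from Proposition \ref{inf}, the precise convolution-and-restriction pattern ($1_{[0,x]}$, $1_{[-x,0]}$, $\cc U(dy-x)$) asserted in the statement; the rest is formal bookkeeping.
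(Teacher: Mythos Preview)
Your proof is correct and follows the same approach as the paper, which disposes of the corollary in one sentence (``obvious by the Laplace inversion of the functions ${\bf m}_i(x,\l)$ given in proposition \ref{inf}''). You have made explicit the three ingredients that the paper leaves implicit: the elementary identity $\E\bigl(\int_0^{\z}g_t\,dt\bigr)=\phi(0)^{-1}\E(g_{\z})$ when $\z$ is an independent exponential time, the factorization/marginalization argument giving conditional independence and the equidistribution of $X_{\z}-I_{\z}$ with $S_{\z}$, and the cofactor expansion that recovers the coefficients $a_i(x),\cc a_i(x)$ already computed inside the proofs of Propositions~\ref{calA} and~\ref{calcA}.
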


  {\bf Remark}  Let $\phi_0$ be a proper (not killed) L\'evy exponent and apply the formula of theorem to $\phi=\phi_0+q$ when $q$ vary from $0$ to $+\i$, let $\P_q$ be the corresponding distribution, that is the distribution of the L\'evy process $X$, with L\'evy exponent $\phi_0$, killed at an independent exponential time of rate $q$. Clearly we have 
 $$\E_q( \int _0^{\z}e^{-\l_1 I_t} 1_{S_t-I_t\leq x} e^{-\l_2 (X_t-I_t)} dt)=\E_0(\int_0^{+\i}e^{-\l_1 I_t} 1_{S_t-I_t\leq x} e^{-\l_2 (X_t-I_t)}e^{- q t} dt) $$

  When taking the inverse Laplace transform over $q$, $\l_1$ and $\l_2$, 
  we find the measure 
 $$\P_0( I_t\in dy, X_t-I_t\in dz, S_t-I_t\leq x)dt$$
 Then the previous theorem and corollary give a characterization of the distribution of $(S_T,X_T,I_T)$ at any time $T$.
 
 As an application,  one can compute   the price of a double barrier knock-out call option under exponential L\'evy model for the asset $Y$, $Y_t=Y_0e^{X_t}$,  when the underlying L\'evy process $X$ satisfy the assumption \ref{Ass}, that is, an european call option with strike $K$  which  is desactivated if the asset $Y$ goes down a barrier value $a$ or up a value $b$ before the time of maturity $T$. It is clear that the  price is  given by the expression : 
 $$\E_0( (Y_0e^{X_T}-K) 1_{X_T>\log {K\over Y_0}} 1_{I_T>\log {a\over Y_0}}1_{S_T<\log {a\over Y_0}})$$
 
 Then, this price can be deduced from the distribution of the triple $(S_T, X_T, I_T)$.

\section{Behavior of the process $(X,S)$ at time $V_x=\inf\{t, X_t-S_t<-x\}$}
 
\begin{theorem}  \label {BccC}

 $$\E\biggl( \exp \bigl(-\mu_2 S_{V_x}-\mu_1(X_{V_x}-S_{V_x}))1_{V_x<\z}\bigr)\biggr) =\cc \psi(\mu_1){\left |\begin{array} {cc} 
\cc U_{]-\i, -x[}(\mu_1) & e^{\mu_1 x}.{\bf n} (x,\mu_1)\\ {\bf w}(x)  & {\bf W}(x) \end{array} \right |\over \left |\begin{array} {cc}  1 &  {\bf  i}(\mu_2)\\  {\bf w}(x) & {\bf W}(x)
\end{array} \right |}\quad \hbox{if}\quad m=n$$ and

$$=\cc \psi(\mu_1){\left |\begin{array}{ccc}\mu_1  \cc U_{]-\i, -x[}(\mu_1) &- \cc  U_{]-\i, -x[}(\mu_1) &  e^{\mu_1 x}.{\bf n}(x,\mu_1)\\
 {\bf  w'}(x)  & {\bf w}(x) & {\bf W}(x)\end{array}\right |\over \left |\begin{array}{ccc}\mu_2 & -1 &{\bf i}(\mu_2)\\
 {\bf  w'}(x)& {\bf w}(x) & {\bf W}(x)\end{array}\right |
}\quad \hbox{if}\quad m=n+1$$

\end{theorem}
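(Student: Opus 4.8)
The plan is to combine the excursion-theoretic decomposition of the path at its running maximum with the one-sided fluctuation identities of Section~2, and to resolve the resulting system of linear constraints by Cramer's rule; this is also, in essence, what Sections~10--13 do by invoking the general identity of \cite{F10}. First, by the perturbation argument of the Remark following Corollary~\ref{AccAcor} (replace $\phi$ by $\phi+q$ with $q\ge 0$ and invert in $q$ at the end), it suffices to treat the killed case $\phi(0)>0$, so that $\P(V_x<\z)>0$ and all transforms below are finite. Write $F(\mu_1,\mu_2):=\E[\exp(-\mu_2 S_{V_x}-\mu_1(X_{V_x}-S_{V_x}))\,1_{V_x<\z}]$; since $X_{V_x}-S_{V_x}\le -x$, the undershoot $R:=(S_{V_x}-x)-X_{V_x}\ge 0$ measures how far below the moving barrier the process lands.

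\emph{Decomposition at the maximum.} Split $[0,V_x)$ along the excursions of the reflected process $Y=S-X$ away from $0$: $V_x$ lies in the first excursion that attains height $>x$ (before the possible killing). The compensation formula for the Poisson point process of excursions factorizes $F$ into a $\mu_2$-factor times a $\mu_1$-factor. The $\mu_2$-factor is the Laplace transform of $S_{V_x}=H_{\ell^-}$, the ascending ladder-height subordinator $H$ read at an independent exponential local-time clock whose rate records the event ``the straddling excursion reaches $>x$''; the $\mu_1$-factor is the excursion-measure average of $e^{-\mu_1(\text{position at first passage of the excursion above }x)}$. Excursion theory makes $S_{V_x}$ and $X_{V_x}-S_{V_x}$ conditionally independent on $\{V_x<\z\}$, which is exactly the product form of the statement; the $\mu_1$-factor is driven by the downward motion of $X$, hence by the negative Wiener--Hopf factor $\cc\psi$ and its potential $\cc U$ (whence the prefactor $\cc\psi(\mu_1)$ and the ingredients $\mathbf W(x),\mathbf w(x),\mathbf n(x,\mu_1),\cc U_{]-\i,-x[}$), while the $\mu_2$-factor, which carries the law of the attained maximum, brings in $\mathbf i(\mu_2)$.

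\emph{Making the two factors explicit.} Here the rational structure is used. For the $\mu_1$-factor, the position of $X$ at $V_x$ relative to the random barrier $S_{V_x}-x$ equals, up to the known first-passage-below-a-level law (transform $\cc\psi\,\cc U_{]-\i,-x[}$, Proposition~\ref{Flu-}), a correction governed by the rational part $P/Q$ of $\phi$, i.e.\ by the poles $-\gamma_j$ of $\phi$ and $\psi$ (Theorem~\ref{WH}); encoding the unknown correction data as the coefficients of the family $\mathbf n(x,\mu_1)$ and imposing that the full meromorphic expression have no spurious poles at the roots $-\beta_i$ of $\psi$ --- equivalently, matching it against the one-sided overshoot/creeping data of Corollary~\ref{Flu+} --- produces a linear system with coefficient matrix $\mathbf W(x)$ (and, when $m=n+1$, its bordered version $\tilde{\mathbf W}(x)$, the extra row $(\cc u(-x),\mathbf w'(x))$ accounting for upward creeping, cf.\ Proposition~\ref{ccu}). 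Cramer's rule yields the numerator determinant; the same argument applied to the ascending ladder clock yields the denominator determinant. Assembling the $\mu_2$- and $\mu_1$-factors and pulling out $\cc\psi(\mu_1)$ gives the two displayed identities, for $m=n$ and $m=n+1$ respectively.

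\emph{Main obstacle.} The delicate points are the bookkeeping of the killing and of the creeping case $m=n+1$: one must check that on $\{V_x<\z\}$ the excursion decomposition correctly separates lifetimes that terminate inside an excursion from those terminating at a ladder epoch, that the boundary family $\mathbf w(x),\mathbf W(x)$ is exactly the one making the linear system non-degenerate, and that the extra determinantal row is the right $\mu\to-\i$ limit of the generic identity --- this is where Proposition~\ref{ccu} gets proved ``incidentally'', as announced. Once the linear system is correctly identified, its solution by determinants is routine but lengthy.
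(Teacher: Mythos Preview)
Your excursion-theoretic factorization is sound and does reproduce the product form $\cc C(x,\mu_1)/B(x,\mu_2)$ recorded in Proposition~\ref{funct}. However, the paper does not compute $\cc C$ and $B$ from excursion data. Its route is the polynomial ansatz of Proposition~\ref{poly} (a consequence of the matrix identity~(\ref{WHM}) from \cite{F10}): $\cc C$, $B$ and $\cc A$ are all expressed through the \emph{same} pair of polynomials $P_2,Q_2$. In Section~11 one first uses the holomorphy of $\cc C$ on $\{\Re\l<0\}$ to parametrize $P_2,Q_2$ by unknowns $\cc c_j(x)$; the decisive constraint is then that $\cc A(x,\l)=\bigl(Q_2 e^{\l x}+P_2\,\cc U_{[-x,0]}(\l)\bigr)/\Pi_i(\l+\beta_i)$ be \emph{entire}, which forces the numerator to vanish at each $-\beta_i$ and yields the Cramer system with matrix ${\bf W}(x)$ (and, when $m=n+1$, the extra equation from Lemma~\ref{q2} producing the bordered row ${\bf w}'(x)$).

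This is precisely where your argument has a gap. You claim the linear system arises by ``imposing that the full meromorphic expression have no spurious poles at the roots $-\beta_i$ of $\psi$''. But the $\mu_1$-factor $\cc C(x,\mu_1)$ is (up to a constant) the Laplace transform of a sub-probability on $(-\i,-x]$, hence already holomorphic on $\{\Re\mu_1<0\}$, and dividing by $\cc\psi(\mu_1)$ (which has no zeros there) creates no poles at the $-\beta_i$; under your standing hypothesis $\phi(0)>0$ all $-\beta_i$ lie in this half-plane, so the constraint you invoke is vacuous and cannot determine the $\cc c_j(x)$. In the paper the equations at $-\beta_i$ come from a \emph{different} probabilistic object, the entire function $\cc A$ (the potential of $(I,X,S)$), whose link to $\cc C$ through the shared polynomials $P_2,Q_2$ is not an excursion-theoretic fact but the content of Proposition~\ref{poly}. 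Nor does ``matching against Corollary~\ref{Flu+}'' help: that corollary concerns the ascending ladder height and upward overshoot, not the downward undershoot $X_{V_x}-S_{V_x}$. To make your route complete you would have to either rederive the coupling of $\cc A$ and $\cc C$ through $P_2,Q_2$ (essentially reproving Proposition~\ref{poly}), or compute the excursion-measure transform of the first passage of $S-X$ above $x$ directly and identify it with the determinant; neither is done here.
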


{\bf Notations.} 
Denote if $m=n$,
$$b(x):=\left |\begin{array} {cc}  1 &  -{\bf  i}(0)\\  {\bf w}(x) & {\bf W}(x)
\end{array} \right |\qquad \cc c(x):=\lim_{\l \to 0}\cc  \psi(\l)\left |\begin{array} {cc} 
\cc U_{]-\i,-x[}(\l) & -{\bf n} (x,\l)\\ {\bf w}(x)  & {\bf W}(x) \end{array} \right |.$$

And, if $m=n+1$, 
$$b(x):=\left |\begin{array}{ccc}0 & -1 &{\bf i}(0)\\
 {\bf  w'}(x)& {\bf w}(x) & W(x)\end{array}\right |
\qquad \cc c(x):=\lim_{\l\to 0}\cc \psi(\l)\left |\begin{array}{ccc}\l  \cc U_{]-\i, -x[}(\l) & - \cc  U_{]-\i, -x[}(\l) & {\bf n}(x,\l)\\
 {\bf  w}'(x)  & {\bf w}(x) & W(x)\end{array}\right |$$

\begin{corollary} \label{BccCco} 

 $$\P(V_x<\z)={\cc c(x)\over b(x)},$$

The random variables $S_{V_x}$ and $X_{V_x}- S_{V_x}$ are independent conditionally to $V_x<\z$ and the distribution of $X_{V_x}-S_{V_x}$ on  $V_x<\z$ is characterized by the identity, 
$$\cc U(dz)* \P(X_{V_x}-S_{V_x}\in dz; V_x<\z)$$
$$=[b(x)]^{-1}.\Bigl[1_{z<-x}\cc U(dz)\Bigr] *\Bigl[\cc c_{-1} \delta' _0(dz) +\cc c_0(x)\delta_0(dz) -\sum_1^n  \cc c_j(x)z^{k_j}
 e^{-\gamma_j z}1_{z<-x}(dz)\Bigr]$$
And, 

 $$\P(S_{V_x}\in dy;  V_x<\z)=\cc c (x)\Bigl(b_0\delta_0(dy) + \sum_{i=1}^m  b_i(x)y^{m_i(x)} e^{-\beta_i(x) y}1_{y>0}dy\Bigr).$$

Where,  if $m=n$, 

$$\cc c_{-1}=0\qquad \cc c_0(x)=-1\qquad \cc c_j(x)=-\left | {\bf  W} (x)^j_{[{\bf w}(x)]}\right |  \qquad  j=0,\dots, n $$

And if $m=n+1$, then 

$$\cc c_{-1}=1\qquad \cc c_j(x)= \left |{\bf{\tilde W}} (x)^{j+1}_{[{\bf  w}'(x)]}\right | \qquad  j=0,\dots, n$$

The   $\beta_i(x)$, the exponents $m_i(x):=\sharp \{k<i; \beta_k(x)=\beta_i(x)\}$ and   the coefficients $b_i(x)$ are given by the following fractional expansion

$$b_0(x)+ \sum_{i=1}^m {b_i(x)\over (\l +\beta_i(x))^{m_i(x)+1}}= [\cc c_{-1}(x)\l +\cc c_0(x) - \sum_{j=1}^n  \cc c_j(x){1\over (\l+\gamma_j)^{n_j+1}}]^{-1}$$
$$= \left |\begin{array} {cc}  1 &  {-\bf  i}(\l)\\  1 & {\bf W}(x)\end{array} \right |^{-1} \quad \hbox{if}\quad  m=n$$

$$= \left |\begin{array}{ccc}\l &-1 & {\bf i}(\l)\\
 {\bf  w'}(x)& {\bf w}(x) & {\bf W}(x)\end{array}\right |^{-1}\quad \hbox{if}\quad m=n+1$$

\end{corollary}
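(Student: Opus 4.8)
My plan is to deduce the corollary from Theorem \ref{BccC} by exploiting one structural feature of that statement, together with Laplace inversion based on the matrices of Section 3 and on Proposition \ref{inf}. The feature is that, in both cases $m=n$ and $m=n+1$, the right-hand side of Theorem \ref{BccC} is of the form $F_x(\mu_1)/D_x(\mu_2)$, where $D_x(\mu_2)$ is the denominator determinant --- a rational function of $\mu_2$ only, the entries ${\bf i}_j(\mu_2)$ being rational in $\mu_2$ --- and $F_x(\mu_1)=\cc\psi(\mu_1)\times(\hbox{numerator determinant})$ depends on $\mu_1$ only; in particular it is a product of a function of $\mu_1$ by a function of $\mu_2$. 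Since the left-hand side is the bilateral Laplace transform $\E(e^{-\mu_2 S_{V_x}-\mu_1(X_{V_x}-S_{V_x})}1_{V_x<\z})$, absolutely convergent and analytic on $\{\Re(\mu_1)<0<\Re(\mu_2)\}$ --- there $e^{-\mu_1(X_{V_x}-S_{V_x})}$ and $e^{-\mu_2 S_{V_x}}$ are bounded because $X_{V_x}-S_{V_x}\le-x<0$ and $S_{V_x}\ge0$ on $\{V_x<\z\}$ --- it characterises the joint law of $(S_{V_x},X_{V_x}-S_{V_x})$ restricted to $\{V_x<\z\}$. Specialising to $\mu_2=0$, then letting $\mu_1\to0$, then both, gives
$$\E(e^{-\mu_1(X_{V_x}-S_{V_x})}1_{V_x<\z})=\frac{F_x(\mu_1)}{D_x(0)},\qquad \E(e^{-\mu_2 S_{V_x}}1_{V_x<\z})=\frac{F_x(0)}{D_x(\mu_2)},$$
$$\P(V_x<\z)=\frac{F_x(0)}{D_x(0)},$$
and dividing the first two displays by the third shows that, conditionally on $\{V_x<\z\}$, the joint Laplace transform of $(S_{V_x},X_{V_x}-S_{V_x})$ is a function of $\mu_2$ times a function of $\mu_1$ --- which is the asserted conditional independence, the conditional marginals being as in the displays. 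After elementary column operations the symbols $\cc c(x)$ and $b(x)$ of the ``Notations'' block following Theorem \ref{BccC} are $F_x(0)$ and $D_x(0)$ (their definitions build in the limit $\l\to0$ so as to allow for $\cc\psi(0)=0$), so $\P(V_x<\z)=\cc c(x)/b(x)$.

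To identify the law of $X_{V_x}-S_{V_x}$ on $\{V_x<\z\}$ I would convolve with $\cc U$ and use $\int e^{-\mu_1 z}\cc U(dz)=1/\cc\psi(\mu_1)$, which cancels the factor $\cc\psi(\mu_1)$ inside $F_x(\mu_1)$ and leaves
$$\int e^{-\mu_1 z}\bigl[\cc U(dz)*\P(X_{V_x}-S_{V_x}\in dz;\,V_x<\z)\bigr]=\frac1{b(x)}\bigl(\hbox{numerator determinant of Theorem \ref{BccC}}\bigr).$$
Expanding this determinant along its first row writes it as a linear combination --- with coefficients the first-row cofactors, which are exactly the $\cc c_{-1}$, $\cc c_0(x)$, $\cc c_j(x)$ of the statement ($|{\bf W}(x)^{j}_{[{\bf w}(x)]}|$ when $m=n$, $|{\bf{\tilde W}}(x)^{j+1}_{[{\bf w}'(x)]}|$ when $m=n+1$) --- of the elementary transforms $\cc U_{]-\i,-x[}(\mu_1)$, $e^{\mu_1 x}{\bf n}_j(x,\mu_1)$ and, when $m=n+1$, also $\mu_1\cc U_{]-\i,-x[}(\mu_1)$. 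Inverting these term by term with the second part of Proposition \ref{inf} --- $\cc U_{]-\i,-x[}(\mu_1)$ being the transform of $1_{z<-x}\cc U(dz)$, the factor $e^{\mu_1 x}$ supplying the translation that reconciles the support restrictions, and $\mu_1\cc U_{]-\i,-x[}(\mu_1)$ producing the $\delta'_0$-component --- yields the announced convolution identity.

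To identify the law of $S_{V_x}$ on $\{V_x<\z\}$ I would note that $\E(e^{-\mu_2 S_{V_x}}1_{V_x<\z})=\cc c(x)/D_x(\mu_2)$ is rational in $\mu_2$, so that its partial-fraction decomposition presents $\P(S_{V_x}\in dy;V_x<\z)$ as the stated mixture of $\delta_0(dy)$ with densities $y^{m_i(x)}e^{-\beta_i(x)y}1_{y>0}dy$, the exponents $-\beta_i(x)$ and multiplicities $m_i(x)$ being the poles of $1/D_x$, i.e. the zeros of $D_x$; and expanding $D_x(\l)$ along its first row gives $D_x(\l)=\cc c_{-1}\l+\cc c_0(x)-\sum_{j=1}^n\cc c_j(x)(\l+\gamma_j)^{-(n_j+1)}$, which is the reciprocal of $b_0(x)+\sum_i b_i(x)(\l+\beta_i(x))^{-(m_i(x)+1)}$, i.e. the last displayed identity. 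Proposition \ref{ccu} (existence in $[0,+\i[$ of $\cc u(-x)=\lim_{\l\to-\i}(-\l)e^{-\l x}\cc U_{]-\i,-x[}(\l)$, needed to form ${\bf w}'(x)$ when $m=n+1$) comes out as a by-product: the left-hand sides of Theorem \ref{BccC} being finite and continuous in $(\mu_1,\mu_2)$, the limits appearing on the right-hand side must exist.

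The conceptual steps --- the product structure yielding conditional independence, the cancellation of $\cc\psi(\mu_1)$ through convolution with $\cc U$, and the partial-fraction decomposition --- are short. I expect the main obstacle to be the bookkeeping: matching each first-row cofactor with the correctly-signed named minor; reconciling the sign conventions of the ``Notations'' block (the $-{\bf i}$, the $-{\bf n}$, and the signs carried by the $\cc c_j(x)$) with the direct evaluations $F_x(0)$ and $D_x(0)$; keeping precise track of the support restrictions ($1_{z<-x}$ versus $1_{z<0}$) when inverting $e^{\mu_1 x}{\bf n}_j(x,\mu_1)$; correctly extracting the distributional $\delta'_0$-term from $\mu_1\cc U_{]-\i,-x[}(\mu_1)$ in the case $m=n+1$; and justifying the $\l\to0$ and $\l\to-\i$ limits.
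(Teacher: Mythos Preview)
Your approach is correct and is essentially the paper's own (very terse) proof: the product structure in Theorem \ref{BccC} gives the conditional independence and $\P(V_x<\z)=\cc c(x)/b(x)$, convolving with $\cc U$ cancels the factor $\cc\psi(\mu_1)$, and expanding the numerator determinant along its first row together with Proposition \ref{inf} (for the ${\bf n}_j$ terms) and partial fractions (for $1/B(x,\mu_2)$) produces the stated laws --- the bookkeeping with signs you flag is indeed the only real work, and the paper's displayed formulas are not entirely consistent on this point. One small correction: Proposition \ref{ccu} is not a by-product of this argument, since $\cc u(-x)$ is a $\l\to-\i$ limit rather than a limit in $(\mu_1,\mu_2)$; the paper establishes it earlier (inside the proof of Proposition \ref{poly}) precisely so that ${\bf w}'(x)$, and hence the $m=n+1$ case of Theorem \ref{BccC}, can even be stated, so you should invoke it rather than claim to derive it here.
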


{\bf Remark}

1) Notice that, when $\cc\psi(0)=0$, that means that $X$ does not drift to $+\i$ (see lemma 2, chapter 6 of \cite {B96}) then $$\P(X_{T_x}<\z)=\P(X_{T_x}<+\i)=1, $$  thus    $$\cc\psi(\l)\cc U_{]-\i,-x[}(\l)=\E( e^{-\l X_{T_x}}; T_x<+\i)\to 1\qquad  \l\to 0$$ and we get 
 $$\lim_{\l\to 0} \cc \psi(\l)n_j(x,\l)=\lim_{\l\to 0}{\partial^{n_j}\over(-\partial \gamma)^{n_j}} \Bigl[{\cc \psi(\l)e^{-\l x}\cc U_{]-\i,-x[}(\l)-\cc\psi(\l)e^{\gamma x}\cc U_{]-\i,-x[}(-\gamma)\over \l+\gamma}\Bigr]_{\gamma=\gamma_j}$$
 $$= {\partial^{n_j}\over(-\partial \gamma)^{n_j}} \Bigl[{1\over  \gamma}\Bigr]_{\gamma=\gamma_j}={ \bf i}_j(0)$$
 
 Thus,  we see  that  $b(x)=\cc c(x)$ when $\cc \psi(0)=0$ and,  according to the previous corollary, we obtain that $\P(V_x<+\i)=1$ when $X$ does not drift to $+\i$:  this means that the heights of the excursions out of the set $\{t, X_t=S_t\}$ are unbounded a.s.  This can be obtained by elementary trajectorial arguments.

2) In order to obtain a direct characterization of the distribution $\P(X_{V_x}-S_{V_x}\in dz; V_x<\z)$ instead of the characterization of the convoluted distribution 
$\cc U(dy)*\P(X_{V_x}-S_{V_x}\in dz; V_x<\z)$, it is necessary to introduce the drift, the L\'evy measure and the killing rate associated to the exponent   $\cc\psi$. We leave this part to the interested reader.

3) Due to  the lake of memory of exponential distributions and the fact that 
$$X_{V_x}1_{X_{V_x}>0}= (S_{V_x}+ (X_{V_x}-S_{V_x}) )^+, $$
 the distribution of $X_{V_x}$ given $X_{V_x}>0$ is, like the distribution of $S_{V_x}$ as stated  in the preceding corollary,  a linear combination of the functions $y^{m_i(x)}e^{-\beta_i(x)y}1_{y>0}$.
  
4) It is clear  from the characterisation of the distribution of $X_{V_x}-S_{V_x}$ when $m=n+1$ that the convolution $1_{z<-x}\cc U(dz)*\delta'(dz)$ is a signed measure, that means that the measure $\cc U(dy)$ has a density, and this density, which is the fonction $\cc u$ appearing in lemma \ref{ccu},  is of bounded variations.

\section{Behavior of the process $(I,X)$ at time $V^x:=\inf\{t; X_t-I_t>x\}$}

\begin{theorem}  \label{CccB}
$$\E( e^{-\mu_1 I_{V^x}} e^{-\mu_2 (X_{V^x}-I_{V^x})})={e^{-\mu_2 x}\over \cc \psi(\mu_1)}{\left |\begin{array} {cc} 0 & {\bf  i}(\mu_2)  \\   {\bf c} & W(x) \end{array} \right | \over 
\left |\begin{array} {cc} 1& {\bf  n}(x,\mu_1)
 \\  {\bf c} &  W(x)  \end{array} \right |}\quad\hbox{if}\quad m=n $$
 
$$= {e^{-\mu_2 x}\over\cc \psi (\mu_1)}.{\left |\begin{array} {ccc} 0 & 1 &{\bf  i}(\mu_2) \\    {\bf c} & {\bf w} (x) &{\bf  W} (x)   
\end{array} \right | \over \left |\begin{array} {ccc} 1 & e^{-\mu_1 x} \cc U_{]-\i,-x[}(\mu_1) &{\bf  n}(x,\mu_1) \\   {\bf c} & {\bf w}(x) &{\bf W}(x)   
\end{array} \right |} \quad\hbox{if}\quad m=n+1 $$

\end{theorem}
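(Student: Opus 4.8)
The plan is to derive Theorem~\ref{CccB} from the master fluctuation identity of \cite{F10} recalled in Section~9 --- an identity that reduces the computation of functionals of $X$ at $V^x$ (and at similar stopping times) to the Wiener--Hopf factors $\psi,\cc\psi$ and the potential $\cc U$, but in a form still containing a few undetermined constants --- and then to make the answer explicit through Assumption~\ref{Ass}; as usual the expectation is understood on $\{V^x<\z\}$. Conceptually, $V^x$ is a clean excursion--theoretic time: it is the beginning of the first excursion of the reflected process $X-I$ above $0$ whose height exceeds $x$. Decomposing a path at $g:=\sup\{t<V^x;\ X_t=I_t\}$ one sees that $I_{V^x}=I_g$, that $X_{V^x}-I_{V^x}=X_{V^x}-X_g$ is carried by the excursion straddling $V^x$, and that these two quantities are independent conditionally on $\{V^x<\z\}$ --- a conditional product structure already visible in the stated formula, the numerator determinant depending on $\mu_2$ alone and the denominator on $\mu_1$ alone. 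The $\mu_1$--part is governed by the descending ladder structure, hence by $\cc\psi$ and by $\cc U$ restricted to the band of width $x$ (whence the prefactor $1/\cc\psi(\mu_1)=\int_{]-\i,0]}e^{-\mu_1 y}\cc U(dy)$ and the entry ${\bf n}(x,\mu_1)$, together with ${\bf w}(x)$ and $\cc U_{]-\i,-x[}(\mu_1)$ in the creeping case $m=n+1$), while the $\mu_2$--part is governed by the upward--passage behaviour of $X$, which under Assumption~\ref{Ass} yields an explicit mixture of gamma--type laws on $[x,+\i)$ (whence the factor $e^{-\mu_2 x}$ and the line ${\bf i}(\mu_2)$, compare part~2 of Corollary~\ref{Flu+}). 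Incidentally, the statement is, up to the reflection $X\mapsto -X$, Theorem~\ref{BccC} for the dual process; since that process need not satisfy Assumption~\ref{Ass}, the computation must be redone rather than quoted.

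Concretely, after invoking the \cite{F10} identity I would substitute the rational expression of $\psi$ from Theorem~\ref{WH}, with roots $-\beta_1,\dots,-\beta_m$ and poles $-\gamma_1,\dots,-\gamma_n$, and replace the quantities that enter it by the Laplace transforms assembled in Proposition~\ref{inf} (the ${\bf i}_j$, ${\bf n}_j$, ${\bf m}_i$) and, when $m=n+1$, by the density $\cc u$ of Proposition~\ref{ccu} (which feeds ${\bf w}'$). What then remains is a finite--dimensional affine problem of exactly the type used to pass from Theorem~\ref{WH} to Corollary~\ref{Flu+}: the finitely many scalar unknowns --- the coefficients of the rational parts of the candidate joint transform and the mass $\P(V^x<\z)$ --- are determined by requiring the candidate identity to have no spurious pole at any $-\gamma_j$ and to reduce correctly to the known objects $\psi$ and $\cc U$ in the appropriate limits, equivalently by matching the two sides as meromorphic functions. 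This produces a linear system whose coefficient matrix is $[\,{\bf c}\ \ {\bf W}(x)\,]$ when $m=n$, and $[\,{\bf c}\ \ {\bf w}(x)\ \ {\bf W}(x)\,]$ together with one further scalar equation (the rows carrying the entries $0,1$ and $1,e^{-\mu_1 x}\cc U_{]-\i,-x[}(\mu_1)$) when $m=n+1$; Cramer's rule then yields the displayed ratios of determinants, and evaluating at $\mu_1=\mu_2=0$ gives $\P(V^x<\z)$.

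The step I expect to be the main obstacle is the determinant bookkeeping just described: one has to carry the two cases $m=n$ and $m=n+1$ in parallel --- the creeping case enlarging the square matrix by one row and one column and bringing in $\cc u$ via Proposition~\ref{ccu} --- and to transport the repeated--pole and repeated--root derivative operators $\partial^{n_j}/(-\partial\gamma)^{n_j}$ and $\partial^{m_i}/(-\partial\beta)^{m_i}$ correctly through the residue calculations, checking that the system obtained after these differentiations is exactly the one with coefficient matrix ${\bf W}(x)$, respectively ${\bf \tilde W}(x)$. A secondary, more routine, point is the accounting of the killing rate $\phi(0)$ in the excursion decomposition --- how it is split between the pre--$g$ ladder part and the straddling excursion --- together with the (standard) facts that $g<V^x$ since $X_{V^x}-I_{V^x}\ge x>0$, that the two pieces are measurable, and that they are conditionally independent given $\{V^x<\z\}$.
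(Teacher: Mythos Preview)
Your overall strategy --- invoke the \cite{F10} framework from Section~9, exploit the rationality of $\psi$ to reduce to a finite linear system, and solve by Cramer --- is the paper's strategy, but the mechanism you describe does not match and contains a real gap.

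First, the paper does not use your excursion decomposition at $g=\sup\{t<V^x;\,X_t=I_t\}$. It simply quotes Proposition~\ref{funct}: $\E(e^{-\mu_1 I_{V^x}-\mu_2(X_{V^x}-I_{V^x})};V^x<\z)=C(x,\mu_2)/\cc B(x,\mu_1)$, where $C$ and $\cc B$ are two of the six uniquely determined analytic functions. The conditional independence you argue probabilistically is an output of this factored form, not an input. More importantly, the step that makes the problem finite-dimensional is Proposition~\ref{poly}, which you do not mention: it expresses $C$ and $\cc B$ through the \emph{same} pair of polynomials $P_1,Q_1$, namely $C(x,\l)=e^{-\l x}Q_1(x,\l)/\prod_j(\l+\gamma_j)$ and $\cc B(x,\l)=\cc\psi(\l)\bigl(P_1(x,\l)-Q_1(x,\l)e^{-\l x}\cc U_{]-\i,-x[}(\l)\bigr)/\prod_j(\l+\gamma_j)$. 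This polynomial reduction comes from a matrix Wiener--Hopf argument, not from excursion theory, and without it your ``finite-dimensional affine problem'' is not well posed.

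Second, you have the source of the linear system backwards. You say the unknowns are fixed by ``no spurious pole at any $-\gamma_j$''. In the actual proof, holomorphy of $\cc B$ at the $-\gamma_j$ is used only to \emph{parametrise} $P_1,Q_1$ by scalars $\cc b_0,\dots,\cc b_n$ (this is what produces the rows ${\bf n}(x,\cdot)$ and ${\bf i}(\cdot)$). The $m$ linear \emph{equations} that determine those scalars come from a different function and a different set of points: one uses that $A(x,\l)=(P_1+Q_1 e^{-\l x}\cc U_{[-x,0]}(\l))/\prod_i(\l+\beta_i)$ is entire, hence holomorphic at each $-\beta_i$. This yields the system $\cc b_0(x){\bf w}_i(x)+\sum_j \cc b_j(x){\bf W}_{i,j}(x)=1_{m_i=0}$, i.e.\ right-hand side ${\bf c}$ and coefficient matrix ${\bf W}(x)$ (with the extra column ${\bf w}(x)$ when $m=n+1$; when $m=n$ the degree bound $\deg Q_1\le n-1$ forces $\cc b_0=0$). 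Cramer's rule then gives the displayed determinants. So the roles of the $-\gamma_j$ and the $-\beta_i$ are the reverse of what you wrote; with your version the system has the wrong number of equations and the matrix ${\bf W}(x)$ never appears.
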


{\bf Notations} If $m=n$,
$$c(x)=\left |\begin{array} {cc} 0& {\bf  i}(0)
 \\   {\bf c} &  {\bf W}(x)  \end{array} \right |\qquad \cc b(x)=\lim_{\l\to 0}\cc \psi(\l)\left |\begin{array} {cc} 1& {\bf  n}(x,\l)
 \\  {\bf c} &  {\bf W}(x)  \end{array} \right |$$
 
 If $m=n+1$, 
 
 $$ c(x)= \left |\begin{array} {ccc} 0 &  1 &{\bf  i}(0) \\    {\bf c} &  {\bf w} (x) & {\bf W}(x)  \end{array} \right |\qquad \cc b(x)=\lim_{\l\to 0}\cc \psi(\l)\left |\begin{array} {ccc} 1 & e^{-\l x} \cc U_{]-\i,-x[}(-\l ) &{\bf  n}(x,\l) \\   {\bf c} & {\bf w}(x) & {\bf W}(x)   
\end{array} \right |$$

One can notice again that $\cc b(x)=c(x)$ when $\cc\psi(0)=0$.

\begin{corollary} \label {CccBco}

$$\P(V^x<\z)= {c(x)\over \cc  b(x)}$$

and $I_{V^x}$ and $X_{V^x}-I_{V^x}$ are independent conditionally to $V^x<\z$

$$\P(X_{V^x}-I_{V^x}\in x+ dy\vert  V^x<\z)=[c(x)]^{-1}.\Bigl(c_0(x)\delta_0(dy) + \sum_{j=1}^n c_j(x)y^{n_j} e^{-\gamma_j y}1_{y>0} dy\Bigr) $$

$$\P(I_{V^x}\in dz; V_x<\z)={c (x) \over r(x)} \Bigl[\cc U(dz)\Bigr]* \Bigl[\sum_{n=0}^{+\i}\mu_x^{*n}(dz)\Bigr]$$

with $$\mu_x(dz)=\bigl[1_{z<0}.\cc U(dz-x)\bigr]*\bigl [c_0(x)\delta_0(dz) + \sum_{j=1}^n  c_j(x)  z^{n_j} e^{-\gamma_j z}1_{z>0} dz\bigr]1_{z<0},$$

and, if $m=n$, 
$$c_0(x)=0\quad \hbox{ and  }\quad c_j(x)={|W^j_{\bf c}(x)|\over |{\bf W}(x)|} \quad  \hbox{for}\quad j=1,\dots, n $$

If $m=n+1$, 
$$c_j(x)={|{\bf  \tilde W }^{j+1}_{\bf c}(x)|\over |{\bf \tilde W}(x)|} \quad \hbox{for}\quad  j=0,\dots, n $$

\end{corollary}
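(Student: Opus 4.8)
The plan is to read off Corollary~\ref{CccBco} from Theorem~\ref{CccB} by Laplace inversion, in the same spirit in which the explicit densities of Corollary~\ref{Flu+} were extracted from the Wiener--Hopf factors. The key structural remark is that in both displays of Theorem~\ref{CccB} the right-hand side is a product of a function of $\mu_1$ alone and a function of $\mu_2$ alone: writing $N(x,\mu_2)$ for the numerator determinant (which involves $\mu_2$ only through the line ${\bf i}(\mu_2)$) and $D(x,\mu_1)$ for the denominator determinant (which involves $\mu_1$ only through ${\bf n}(x,\mu_1)$, and through $e^{-\mu_1 x}\cc U_{]-\i,-x[}(\mu_1)$ when $m=n+1$), Theorem~\ref{CccB} reads
$$\E\bigl(e^{-\mu_1 I_{V^x}}e^{-\mu_2(X_{V^x}-I_{V^x})};\,V^x<\z\bigr)=\frac{1}{\cc\psi(\mu_1)\,D(x,\mu_1)}\cdot e^{-\mu_2 x}N(x,\mu_2).$$
Setting $\mu_1=\mu_2=0$ gives $\P(V^x<\z)=c(x)/\cc b(x)$ with the abbreviations of the Notations block, and dividing the joint transform by this constant displays it as a product of two normalised Laplace transforms; hence $I_{V^x}$ and $X_{V^x}-I_{V^x}$ are conditionally independent given $V^x<\z$, with the conditional law of $X_{V^x}-I_{V^x}$ obtained by inverting $\mu_2\mapsto e^{-\mu_2 x}N(x,\mu_2)/c(x)$ and the conditional law of $I_{V^x}$ by inverting $\mu_1\mapsto \cc b(x)\big/\bigl(\cc\psi(\mu_1)D(x,\mu_1)\bigr)$.

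For the overshoot $X_{V^x}-I_{V^x}$ I would expand $N(x,\mu_2)$ along its first row. The $(1,1)$ entry vanishes; the remaining first-row entries are the ${\bf i}_j(\mu_2)$, preceded by the constant $1$ in position $(1,2)$ when $m=n+1$; and the attached minors are, after moving the inserted column ${\bf c}$ into place, the determinants $|{\bf W}^j_{\bf c}(x)|$ (case $m=n$) or $|{\bf\tilde W}^{j+1}_{\bf c}(x)|$ (case $m=n+1$), that is $r(x)c_j(x)$ by the very definitions of $r(x)$ and of the $c_j(x)$ in the statement, and similarly the $(1,2)$-minor is $r(x)c_0(x)$. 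Thus $N(x,\mu_2)$ is a constant multiple of $c_0(x)+\sum_{j=1}^n c_j(x){\bf i}_j(\mu_2)$. By Proposition~\ref{inf}(3), ${\bf i}_j$ is the Laplace transform of $y^{n_j}e^{-\gamma_j y}1_{y>0}\,dy$; the factor $e^{-\mu_2 x}$ records the deterministic shift by $x$, and the $\delta_0$ term (present only when $m=n+1$) reflects the upward creeping of $X$ in that case, cf.\ the remark after Theorem~\ref{WH}. Laplace inversion then yields the stated conditional law of $X_{V^x}-I_{V^x}$ on $x+dy$ as a mixture of $\delta_0$ and the densities $y^{n_j}e^{-\gamma_j y}1_{y>0}$ with coefficients proportional to the $c_j(x)$.

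For $I_{V^x}$ I would put $\mu_2=0$, so that $\E(e^{-\mu_1 I_{V^x}};V^x<\z)=c(x)\big/\bigl(\cc\psi(\mu_1)D(x,\mu_1)\bigr)$, and expand $D(x,\mu_1)$ along its first row: the $(1,1)$ entry $1$ contributes the minor $r(x)$, while the entries ${\bf n}_j(x,\mu_1)$ and (for $m=n+1$) $e^{-\mu_1 x}\cc U_{]-\i,-x[}(\mu_1)$ contribute the minors $-r(x)c_j(x)$ and $-r(x)c_0(x)$ by the same computation as above, so that $D(x,\mu_1)=r(x)\bigl(1-c_0(x)e^{-\mu_1 x}\cc U_{]-\i,-x[}(\mu_1)-\sum_{j=1}^n c_j(x){\bf n}_j(x,\mu_1)\bigr)$. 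A change of variable in the definition of $\cc U_{]-\i,-x[}$ identifies $e^{-\mu_1 x}\cc U_{]-\i,-x[}(\mu_1)$ with the Laplace transform of $1_{z<0}\cc U(dz-x)$, and Proposition~\ref{inf}(2) identifies ${\bf n}_j(x,\mu_1)$ with the Laplace transform of $\bigl((z^{n_j}e^{-\gamma_j z}1_{z>0})*(\cc U(dz-x)1_{z<0})\bigr)1_{z<0}$; hence the bracket equals $1-\widehat{\mu_x}(\mu_1)$, where $\widehat{\mu_x}$ is the Laplace transform of the measure $\mu_x$ displayed in the statement (with $c_0(x)=0$ when $m=n$). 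Together with $1/\cc\psi(\mu_1)=\int_{]-\i,0]}e^{-\mu_1 y}\cc U(dy)$ and the renewal expansion $1\big/(1-\widehat{\mu_x}(\mu_1))=\sum_{k\ge0}\widehat{\mu_x}(\mu_1)^k$, this gives that $\E(e^{-\mu_1 I_{V^x}};V^x<\z)$ is $\tfrac{c(x)}{r(x)}$ times the Laplace transform of $\cc U*\sum_{k\ge0}\mu_x^{*k}$, and inversion produces the announced formula for $\P(I_{V^x}\in dz;\,V^x<\z)$.

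The main obstacle is of two kinds. On the routine side, one must carry out the uniform sign bookkeeping in the two first-row cofactor expansions, checking for both $m=n$ and $m=n+1$ that the minors are exactly $\pm r(x)c_j(x)$ under the column-substitution conventions of ${\bf W}^j_{\bf c}$ and ${\bf\tilde W}^{j+1}_{\bf c}$; the normalising constants in the statement depend on this and it must be done with care. The substantive point is to legitimate the geometric (renewal) expansion: one needs $\sum_{k\ge0}\mu_x^{*k}$ to be a locally finite measure, equivalently that $\mu_x$ has total variation norm strictly less than $1$. I would obtain this either from the probabilistic meaning of $\mu_x$ --- it governs the drop of the running infimum during one excursion of the reflected process $X-I$ away from $0$ that fails to reach the height $x$, so that with positive probability the first such excursion already overshoots $x$ --- or, a posteriori, from the finiteness of the left-hand side for $\Re(\mu_1)<0$ together with analytic continuation in $\mu_1$. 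Finally, the degenerate case $\cc\psi(0)=0$ (that is, $X$ not drifting to $+\i$) must be handled through the limit defining $\cc b(x)$, where one recovers $\cc b(x)=c(x)$ and $\P(V^x<\z)=1$, consistently with the remark preceding the corollary and with the discussion following Corollary~\ref{BccCco}.
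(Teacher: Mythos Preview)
Your approach is correct and coincides with the paper's own proof, which is a one-line remark that the corollary follows from Proposition~\ref{funct}, the explicit computation of $C(x,\l)$ and $\cc B(x,\l)$, and the Laplace inversions of ${\bf n}_j(x,\l)$ and ${\bf i}_j(\l)$ from Proposition~\ref{inf}. You have simply unpacked this: the factorised form of Theorem~\ref{CccB}, the cofactor expansions of numerator and denominator along their first rows to identify the $c_j(x)$, and the geometric series for $1/(1-\widehat{\mu_x})$ are exactly the steps the paper leaves implicit, and your treatment of the convergence of $\sum_{k\ge 0}\mu_x^{*k}$ and of the boundary case $\cc\psi(0)=0$ goes beyond what the paper addresses.
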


\section{Behavior of the process  at time $U_x=\inf\{t, S_t-I_t>x\}$.}
 
 \begin{theorem} \label{Ux}
 $$\P(U_x<\z, X_{U_x}=I_{U_x})={a(x)\cc c(x)\over r(x)^2}\qquad \P(U_x<\z, X_{U_x}=S_{U_x})={\cc a(x)
 c(x)\over r(x)^2}$$

Given  the event $\{U_x<\z;X_{U_x}=I_{U_x}\}$,    $S_{U_x}$ and  $X_{U_x}-S_{U_x}$ are independent.  The random variable   $S_{U_x}$ has  the same distribution as  $S_{\z}$ given $S_{\z}-I_{\z}\leq x$ if  $\phi(0)>0$, and has the distribution $\displaystyle {\int_0^{\i}\P(S_t\in dz,S_t-I_t\leq x) dt \over \int_0^{\i}\P(S_t-I_t\leq x) dt} $ if $\phi(0)=0$. The random variable $X_{U_x}-S_{U_x}$ 
has the same distribution as $X_{V_x}-S_{V_x}$ given $V_x<\z$.

Given the  event $\{U_x<\z;X_{U_x}=S_{U_x}\}$,   $I_{U_x}$ and  $X_{U_x}-I_{U_x}$ are independent.  The random variable $I_{U_x}$ has  the same distribution as  $I_{\z}$ given $S_{\z}-I_{\z}\leq x$ if $\phi(0)>0$, and 
has the distribution $\displaystyle {\int_0^{\i}\P(I_t\in dy,S_t-I_t\leq x)  dt \over \int_0^{\i}\P(S_t-I_t\leq x) dt } $ if $\phi(0)=0$. 
The random variable $X_{U_x}-I_{U_x}$ has the same distribution as $X_{V^x}-I_{V^x}$ given $V^x<\z$.

\end{theorem}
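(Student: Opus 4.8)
The plan is to decompose the trajectory at the time $U_x$ by recognising that $U_x$ must coincide with one of two competing stopping times: either the "drawdown from the running maximum" reaches $x$ first, in which case $X_{U_x}=I_{U_x}$ and $U_x=V_x$ in the sense that the excursion of $S-X$ that crosses height $x$ is the one that realises $U_x$; or the "drawup from the running minimum" reaches $x$ first, in which case $X_{U_x}=S_{U_x}$ and $U_x=V^x$ analogously. More precisely, I would first argue that on $\{U_x<\z\}$ one has the dichotomy $\{X_{U_x}=I_{U_x}\}\cup\{X_{U_x}=S_{U_x}\}$ up to a null set, because at the instant the oscillation $S_t-I_t$ first exceeds $x$ the process must be sitting at one of the two envelopes $I$ or $S$ (the oscillation can only increase by a new maximum or a new minimum). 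This reduces the theorem to analysing each of the two events separately, and by the obvious $X\mapsto -X$ symmetry it suffices to treat $\{U_x<\z;\,X_{U_x}=I_{U_x}\}$.

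On that event, I would use excursion theory of $X$ reflected at its supremum $S$: the set $\{t:X_t=S_t\}$ is the zero set of the ladder process, and $U_x$ (restricted to this event) is the start of the first excursion of $S-X$ whose height exceeds $x$, while $V_x=\inf\{t:X_t-S_t<-x\}$ is the first passage of that excursion below $-x$. The key structural fact is that the value of $S$ accumulated strictly before $U_x$ is governed exactly by the ascending ladder height process killed when it first tries to make a jump (or drift) that would carry the running oscillation past $x$ — and this is precisely the mechanism already computed in Theorem \ref{AccA}/Corollary \ref{AccAcor}, where $\P(S_\z\in dy\mid S_\z-I_\z\le x)$ (or its time-integral analogue when $\phi(0)=0$) was identified. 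Meanwhile $X_{U_x}-S_{U_x}=X_{V_x}-S_{V_x}$ has exactly the law computed in Corollary \ref{BccCco}, since on $\{X_{U_x}=I_{U_x}\}$ the overshoot below the previous maximum during the first "tall enough" excursion is distributed as the overshoot of that same excursion studied in Theorem \ref{BccC}. The independence of $S_{U_x}$ and $X_{U_x}-S_{U_x}$ conditionally on the event is then the standard independence, under the excursion measure, of the pre-excursion local time (hence the ladder height $S_{U_x}$) from the excursion that straddles $U_x$ (hence $X_{U_x}-S_{U_x}$).

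To get the probabilities $\P(U_x<\z,X_{U_x}=I_{U_x})=a(x)\cc c(x)/r(x)^2$, I would combine two computations already in hand: the "geometric" factor counting how many excursions of bounded height occur before the first tall one — this is where the normalising constants $a(x)$ and $r(x)$ from Corollary \ref{AccAcor} enter, since $a(x)/r(x)$ (up to the $\phi(0)$ normalisation) is the total mass $\P(S_\z-I_\z\le x)/\phi(0)$-type quantity controlling "no drawdown of size $x$ yet" — multiplied by the factor $\cc c(x)/r(x)$ coming from Corollary \ref{BccCco}, which is the probability that the first excursion tall enough to matter is of the reflected-at-sup type and exits the strip on the lower side, i.e. $\P(V_x<\z)$ weighted appropriately. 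Concretely I expect the identity to emerge by writing $\{U_x<\z,X_{U_x}=I_{U_x}\}$ as a disjoint union over $k\ge 0$ of the events "exactly $k$ completed short excursions, then one tall excursion of the sup-reflected type, then killing does not occur before $U_x$", summing the resulting geometric series, and recognising the sum as the announced ratio of determinants; the bookkeeping that the same matrices $\bf W$, $\bf w$, $\bf v$, $\bf i$ appear is forced because all these excursion functionals are built from the same Wiener--Hopf data $(\psi,\cc\psi)$ and the measure $\cc U$.

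The main obstacle I anticipate is making the excursion-theoretic decomposition fully rigorous in the killed case $\phi(0)=0$, where one must replace honest conditional probabilities by ratios of time-integrated quantities $\int_0^\i\P(\cdots)dt$; here the cleanest route is the remark following Corollary \ref{AccAcor} — apply everything to $\phi_q:=\phi+q$ with $q>0$, where $\zeta$ is a genuine exponential time and all conditionings are legitimate, establish the identities there, and then let $q\downarrow 0$, using that both sides are ratios of entire (or meromorphic) functions of $q$ so the limit passes through. A secondary technical point is justifying the dichotomy "$X_{U_x}\in\{I_{U_x},S_{U_x}\}$ a.s." when $X$ has jumps: one must check that the oscillation $S_t-I_t$ does not jump across the level $x$ landing strictly inside $(I_t,S_t)$, which is automatic since any jump of $S-I$ is either a jump of $S$ (leaving $X=S$) or a jump of $-I$ (leaving $X=I$), while between jumps $S-I$ moves continuously — so at $U_x$ the process is at an envelope. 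Once these two points are dispatched the rest is assembling already-proved formulas.
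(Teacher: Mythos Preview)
The paper's own proof is essentially a one-line citation: Propositions~6.3 and~6.4 of \cite{F10} already express the law of $(X,I,S)$ at time $U_x$ in terms of the six functions $A,\cc A,B,\cc B,C,\cc C$ for a \emph{general} L\'evy process, and Sections 10--12 of the present paper have computed those six functions as explicit ratios of determinants; substituting gives Theorem~\ref{Ux}.  In particular the underlying identity from \cite{F10} reads
\[
\E\bigl(e^{-\mu_2 S_{U_x}-\mu_1(X_{U_x}-S_{U_x})}\,1_{U_x<\z,\;X_{U_x}=I_{U_x}}\bigr)=A(x,\mu_2)\,\cc C(x,\mu_1),
\]
and the product structure immediately gives the conditional independence together with the identification of the two marginals via Corollary~\ref{AccAcor} and Corollary~\ref{BccCco}.

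Your proposal instead tries to \emph{rederive} that identity from excursion theory, which is a genuinely different and more ambitious route --- essentially reproving the relevant part of \cite{F10}.  The outline is plausible but there is a real gap.  You correctly note that $U_x=V_x$ on $\{X_{U_x}=I_{U_x}\}$, so $X_{U_x}-S_{U_x}=X_{V_x}-S_{V_x}$ pathwise, and then invoke Corollary~\ref{BccCco} for its law.  But that corollary computes the law of $X_{V_x}-S_{V_x}$ conditionally on $\{V_x<\z\}$, whereas you need it conditionally on the strict sub-event $\{V_x<\z,\;V_x<V^x\}$.  That these two conditional laws agree is precisely one of the nontrivial assertions being proved; it does not follow from the standard Poisson-point-process independence of $S_{V_x}$ and the first tall excursion of $S-X$, because the extra constraint $V_x<V^x$ involves the running minimum $I$ over the whole pre-$V_x$ path, not just $S_{V_x}$ or the straddling excursion.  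The same difficulty blocks your identification of the law of $S_{U_x}$ with that of $S_\z$ given $S_\z-I_\z\le x$: this is the content to be proved, and the sentence ``governed exactly by the ascending ladder height process killed when\ldots'' does not supply the argument.  Finally, the ``obvious $X\mapsto -X$ symmetry'' is not available here, since Assumption~\ref{Ass} constrains only the positive jumps; the two events $\{X_{U_x}=I_{U_x}\}$ and $\{X_{U_x}=S_{U_x}\}$ require separate (dual, but asymmetric in the present class) treatments.
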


\section{Behavior of the process at time $T_a^b=\inf\{t; X_t\not\in [-a,b]\}$}

\begin{theorem} \label {Tab} We have, if 
$\phi(0)>0$

 $$\P(X_{T_a^b}=S_{T_a^b}, T_a^b<\z)=\int_{[-a,0]} {c( b-y)\over a(b-y)}{\P(I_{\z}\in dy, S_{\z}\leq b) \over \phi(0)}.$$

And 

 $$\P(X_{T_a^b}=I_{T_a^b}, T_a^b<\z)=\int_{[0,b]} {\cc c( a+x)\over \cc a(a+x)}{\P(I_{\z}\geq -a, S_{\z}\in dx)  \over \phi(0)}$$
 
 If $\phi(0)=0$, we   replace in last formulas ${\P(I_{\z}\in dy, S_{\z}\leq b) \over \phi(0)} $ and ${\P(I_{\z}\geq -a, S_{\z}\in dx ) \over \phi(0)}$ by respectively 
 
$$\int_0^{+\i}\P(I_t\in dy,S_t\leq b) dt \quad \hbox{and} \quad \int_0^{+\i}\P(S_t\in dx,-I_t\geq -a) dt $$

Given $\{X_{T_a^b}=S_{T_a^b}, T_a^b<\z\}$,  the distribution of $I_{T_a^b}$ is the distribution of $I_{\z}$ given $\{S_{\z}\leq b, I_{\z}\geq - a\}$ if $\phi(0)>0$ and the distribution  $\displaystyle {(\int_0^{+\i}\P(I_t\in dy,S_t\leq b) dt \over \int_0^{+\i}\P(S_t\leq b,-I_t\geq -a) dt}$ if $\phi(0)=0$. The distribution of $X_{T_a^b}-I_{T_a^b}$ given $I_{T_a^b}=y$ is the one of $X_{V^{b+y}}$ given $V^{b+y}<\z$.

Given $\{X_{T_a^b}=I_{T_a^b}, T_a^b<\z\}$, the distribution of $S_{T_a^b}$ is the distribution of $S_{\z}$ given $\{S_{\z}\geq -a, I_{\z}\leq b\}$ if $\phi(0)>0$ and the distribution  $\displaystyle {\int_0^{+\i} \P(S_t\in dx, -I_t\geq -a) dt \over \int_0^{+\i}\P(S_t\leq b;  -I_t\geq -a) dt}$ if $\phi(0)=0$. 
The distribution of $X_{T_a^b}-S_{T_a^b}$ given $S_{T_a^b}=x$ is the one of $X_{V_{a+x}}$ given $V_{a+x}<\z$. 

\end{theorem}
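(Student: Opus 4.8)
The plan is to reduce the whole statement to Theorem~\ref{Ux} (the behaviour of the process at the time $U_x$) by means of a single pathwise identity. First note that $X$ leaves $[-a,b]$ through the top exactly on the event $\{X_{T_a^b}=S_{T_a^b},\,T_a^b<\z\}$: for $t<T_a^b$ one has $X_t\le b$, hence $S_{T_a^b-}\le b$, so an upward exit forces $X_{T_a^b}>b\ge S_{T_a^b-}$, i.e. $S_{T_a^b}=X_{T_a^b}$, while the converse is clear because $S_{T_a^b-}\ge X_0=0$ rules out $X_{T_a^b}<-a$ on $\{S_{T_a^b}=X_{T_a^b}\}$; moreover on this event $I_{T_a^b}=I_{T_a^b-}=\inf_{0\le s<T_a^b}X_s\in[-a,0]$. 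The exit through the bottom is obtained by applying the upward case to $-X$ on $[-b,a]$, a symmetry which exchanges the two halves of Theorem~\ref{Ux}, the determinants $a\leftrightarrow\cc a$ and $c\leftrightarrow\cc c$, and the times $V^x\leftrightarrow V_x$; so I concentrate on $\{X_{T_a^b}=S_{T_a^b},\,T_a^b<\z\}$.

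The heart of the argument is the identity, valid for every $y\in[-a,0]$,
$$\{X_{T_a^b}=S_{T_a^b},\ T_a^b<\z,\ I_{T_a^b}\in dy\}=\{U_{b-y}<\z,\ X_{U_{b-y}}=S_{U_{b-y}},\ I_{U_{b-y}}\in dy\}.$$
For $\subseteq$: on the left-hand event $S_t-I_t\le b-y$ for $t<T_a^b$ (since $X_t\le b$ and $I_t\ge I_{T_a^b-}=y$) while $S_{T_a^b}-I_{T_a^b}=X_{T_a^b}-y>b-y$, so $U_{b-y}=T_a^b$, and the exit being upward gives $X_{U_{b-y}}=S_{U_{b-y}}$ and $I_{U_{b-y}}=y$. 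For $\supseteq$: on the right-hand event the new supremum at $U_{b-y}$ is created without a downward jump, so $I$ is continuous there, $I_{U_{b-y}-}=y$, hence $S_{U_{b-y}-}\le y+(b-y)=b$ and $\inf_{s<U_{b-y}}X_s=y\ge-a$, so $X$ stays in $[-a,b]$ before $U_{b-y}$, whereas $X_{U_{b-y}}=S_{U_{b-y}}>I_{U_{b-y}}+(b-y)=b$; thus $U_{b-y}=T_a^b$ and the exit is upward with $I_{T_a^b}=y$. The only delicate point, which also explains why the conditioning on $I_{U_{b-y}}$ is needed, is that $I_{U_{b-y}}$ must be pinned at exactly $y$: a strictly larger value would permit an earlier upcrossing of $b$, a strictly smaller one would let the run at $U_{b-y}$ stop short of $b$. (When $X$ creeps over a level the overshoot is $0$ and all these relations hold with the evident modifications, which are exactly what the $\delta_0$-atoms in the statements absorb.)

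Now apply Theorem~\ref{Ux} with $x=b-y$: on $\{U_{b-y}<\z,\,X_{U_{b-y}}=S_{U_{b-y}}\}$ the variables $I_{U_{b-y}}$ and $X_{U_{b-y}}-I_{U_{b-y}}$ are independent, $I_{U_{b-y}}$ has the law of $I_\z$ given $S_\z-I_\z\le b-y$, $X_{U_{b-y}}-I_{U_{b-y}}$ has the law of $X_{V^{b-y}}-I_{V^{b-y}}$ given $V^{b-y}<\z$, and $\P(U_{b-y}<\z,\,X_{U_{b-y}}=S_{U_{b-y}})=\cc a(b-y)c(b-y)/r(b-y)^2$. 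Combining this with the elementary identity $\{I_\z\in dy,\,S_\z\le b\}=\{I_\z\in dy,\,S_\z-I_\z\le b-y\}$ and with the scalar relation $\P(S_\z-I_\z\le x)/\phi(0)=\cc a(x)a(x)/r(x)^2$ of Corollary~\ref{AccAcor}, the factor $\cc a(b-y)$ and the $r(b-y)^2$ cancel and one obtains
$$\P(X_{T_a^b}=S_{T_a^b},\ T_a^b<\z,\ I_{T_a^b}\in dy)=\frac{c(b-y)}{a(b-y)}\,\frac{\P(I_\z\in dy,\ S_\z\le b)}{\phi(0)}.$$
Integrating over $y\in[-a,0]$ is the first displayed formula of Theorem~\ref{Tab}; the conditional independence of $I_{T_a^b}$ and $X_{T_a^b}-I_{T_a^b}$ given an upward exit, the conditional law of $I_{T_a^b}$ (normalise the density just displayed), and the identification of the conditional law of $X_{T_a^b}-I_{T_a^b}$ given $I_{T_a^b}=y$ with that of $X_{V^{b-y}}-I_{V^{b-y}}$ given $V^{b-y}<\z$, are read off directly from Theorem~\ref{Ux}. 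The case $\phi(0)=0$ follows by repeating the argument with the $\phi(0)=0$ versions of Theorem~\ref{Ux} and Corollary~\ref{AccAcor} — all the identities then hold between the potential measures $\int_0^{+\infty}\P(\,\cdot\,)\,dt$ — or equivalently by applying the killed case to $\phi_0+q$ and letting $q\downarrow 0$.

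The step I expect to cost the most care is the pathwise identity of the second paragraph: one must keep track of the jump at the exit time and of the fact that the running infimum does not move when a new supremum is created, in order to see that $I_{U_{b-y}}$ is forced to equal $y$ on each of the two events — it is precisely this that simultaneously excludes a prior upcrossing of $b$ and a failed one. Once that identity is in hand the rest is bookkeeping: passing from the joint law to the two half-formulas of the theorem is immediate, and the only genuine computation, the cancellation of $\cc a(b-y)$ and $r(b-y)^2$, is routine from the scalar identities already recorded in Theorem~\ref{Ux} and Corollary~\ref{AccAcor}.
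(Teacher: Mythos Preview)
Your pathwise reduction is correct and is a genuinely different route from the paper's. The paper does not argue trajectorially: it simply invokes Propositions~6.3 and~6.4 of \cite{F10}, which express the laws of $(X,I,S)$ at both $U_x$ and $T_a^b$ directly in terms of the six functions $A,\cc A,B,\cc B,C,\cc C$, and then substitutes the determinantal formulas obtained in Sections~10--12. Your identity $\{T_a^b<\z,\ X_{T_a^b}=S_{T_a^b},\ I_{T_a^b}=y\}=\{U_{b-y}<\z,\ X_{U_{b-y}}=S_{U_{b-y}},\ I_{U_{b-y}}=y\}$ instead reduces Theorem~\ref{Tab} to Theorem~\ref{Ux} together with Corollary~\ref{AccAcor}, so that only Proposition~6.3 of \cite{F10} is ultimately needed; this is more transparent and in fact the pathwise step holds for an arbitrary L\'evy process, not just those of Assumption~\ref{Ass}. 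Two small corrections. The appeal to $-X$ for the downward exit is formally illicit since $-X$ need not satisfy Assumption~\ref{Ass}; however the mirror pathwise identity $\{T_a^b<\z,\ X_{T_a^b}=I_{T_a^b},\ S_{T_a^b}=x\}=\{U_{a+x}<\z,\ X_{U_{a+x}}=I_{U_{a+x}},\ S_{U_{a+x}}=x\}$ follows by the same monotonicity argument (now using that $S$ is non-decreasing) and feeds directly into the other half of Theorem~\ref{Ux}, so the repair is cosmetic. And you should drop the claim of conditional \emph{independence} of $I_{T_a^b}$ and $X_{T_a^b}-I_{T_a^b}$ given an upward exit: the conditional law of the latter given $I_{T_a^b}=y$ depends on $y$ through $V^{b-y}$, which is precisely what the theorem asserts.
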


{\bf Remarks} 1) We can deduce from this theorem that the distribution of $X_{T_a^b}-b$ given $X_{T_a^b}>b$ is a linear combination of distribution of the distributions $y^{n_j} e^{-\gamma_j y} 1_{y>0}dy $. That could easily be deduced from  the lack of memory of the exponential distribution. The previous  result allows  also to compute a closed form of the coefficients.

2) As in the remark following theorem \ref{AccA}, let $\phi_0$ be a proper (not killed) L\'evy exponent and apply the formula of theorem to $\phi=\phi_0+q$ when $q$ vary from $0$ to $+\i$, let $\P=\P_q$ be the corresponding distribution, that is the distribution of the L\'evy process $X$, with L\'evy exponent $\phi_0$, killed at an independent exponential time of rate $q$.
The probabilities $\P_q(X_{T_a^b}=S_{T_a^b}, T_a^b<\z)$  and $\P_q(X_{T_a^b}=I_{T_a^b}, T_a^b<\z)$ are $q$ times the Laplace transforms  of the functions of $t$, 

$$\P_0(X_{T_a^b}=S_{T_a^b}, T_a^b<t)\quad\hbox{and}\quad \P_0(X_{T_a^b}=S_{T_a^b}, T_a^b<t).$$ 

On the other hand, 
the price of a double barrier with rebate, say rebate $r^+$  (resp. $r^-$)  if the option exceed the value $b$  (resp. goes down the value $a$) before time $T$. (Denote $\tilde a=-\log {a\over Y_0}$, $\tilde b=\log {b\over Y_0}$) is given by the next formula

$$\E( (Y_0e^{X_T}-K) 1_{X_T>\log {K\over Y_0} }1_{I_T>-\tilde a; S_T<\tilde b})+ r^+\P( X_{T^{\tilde b}_{\tilde a}}= S_{T^{\tilde b}_{\tilde a}}, T^{\tilde b}_{\tilde a} <T)$$
 
$$+r^-\P( X_{T^{\tilde b}_{\tilde a}} = I_{T^{\tilde b}_{\tilde a}}, T^{\tilde b}_{\tilde a}<T)$$

We have seen that the first term can be computed with the help of  theorem \ref{AccA} and now we see that the two other terms can be computed  by  an  inverse  Laplace transform in time of 
$\P_q( X_{T^{\tilde b}_{\tilde a}}= S_{T^{\tilde b}_{\tilde a}}, T^{\tilde b}_{\tilde a} <\z)$  and $\P_q( X_{T^{\tilde b}_{\tilde a}} = I_{T^{\tilde b}_{\tilde a}}, T^{\tilde b}_{\tilde a}<\z)$  
which are characterized in the preceding theorem.

 \section{Recall of the main result  of \cite{F10}  and its application with the Assumption \ref{Ass} } 
 
  \begin {proposition} \label{funct}
 There exist unique functions $A(x,\l)$, $\cc A(x,\l)$, $B(x,\l)$ and $\cc B(x,\l)$, $C(x,\l)$  and $\cc C(x,\l)$ such that 
  $A(x,\l)$ and $\cc A(x,\l)$,
  are  defined for $\l\in \C$ and entire;   $B(x,\l)$ and $C(x,\l)$ are defined, continuous  on $\{\Re(\l)\geq 0\}$ and holomorphic on $\{\Re(\l)>0\}$; $\cc B(x,\l)$ and $\cc C(x,\l)$ are defined and  continuous on  $\{\Re(\l)\leq 0\}$, and holomorphic on $\{\Re(\l)<0\}$, satisfying the following identities,
 
$$\P( \int _0^{\z}e^{-\l_1 I_t} 1_{S_t-I_t\leq x} e^{-\l_2 (X_t-I_t)} dt)=\P( \int _0^{\z}e^{-\l_2 S_t}1_{S_t-I_t\leq x} e^{-\l_1 (X_t-S_t)} dt)$$
 $$=\cc A(x,\l_1)A(x,\l_2) $$

$$\E\biggl( \exp \bigl(-\mu_2 S_{V_x}-\mu_1(X_{V_x}-S_{V_x})1_{V_x<\z}\bigr)\biggr) ={\cc C(x,\mu_1)
\over B(x,\mu_2)} $$

$$\E\biggl( \exp \bigl(-\mu_1 I_{V^x}-\mu_2(X_{V^x}-I_{V^x})1_{V^x<\z}\bigr)\biggr) ={C(x,\mu_2)\over \cc B(x,\mu_1)}  $$

$$\psi(\l)A(x,\l)\to 1 \qquad B(x,\l)\sim \psi(\l) \quad \hbox{for}\quad \l\to +\i$$
$$\cc B(x,\l)\sim\cc\psi(\l)\quad \hbox{for}\quad  \l\to-\i$$
 
 \end {proposition}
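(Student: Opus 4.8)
The statement to prove is Proposition~\ref{funct}, which is quoted as the main result of \cite{F10}; so strictly speaking the "proof" is a translation of the abstract functional-analytic result of \cite{F10} into the present setting, together with the uniqueness and asymptotic normalization clauses. The plan is therefore twofold: first establish existence of a sextuple of functions with the stated holomorphy/continuity properties satisfying the three integral identities, then pin them down by the asymptotic conditions and deduce uniqueness.

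For \textbf{existence}, I would start from the Wiener--Hopf philosophy already used in Section~2. The quantity $\E(\int_0^\zeta e^{-\l_1 I_t}1_{S_t-I_t\le x}e^{-\l_2(X_t-I_t)}\,dt)$ is, by the Itô--excursion decomposition of $(X-I)$ and the independence of the ascending and descending ladder structures (the bivariate analogue of the classical factorization, cf.\ \cite{B96} Chapter~6), a product of a factor depending only on $\l_1$ (through the descending ladder / the measure $\cc U$ restricted to $[-x,0]$) and a factor depending only on $\l_2$ (through the ascending ladder stopped when the reflected process first exceeds $x$). Calling these $\cc A(x,\l_1)$ and $A(x,\l_2)$ gives the first identity; the roles of $S$ and $I$ are symmetric by time reversal of the L\'evy process on $[0,\zeta]$, which is why the two left-hand integrals coincide. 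Similarly, $\E(\exp(-\mu_2 S_{V_x}-\mu_1(X_{V_x}-S_{V_x}))1_{V_x<\zeta})$ factors: conditionally on $V_x<\zeta$ the post-$V_x$ increment $X_{V_x}-S_{V_x}$ depends on the descending ladder height process (whence a numerator $\cc C(x,\mu_1)$), while the "time spent near the supremum before the fatal excursion" contributes a denominator $B(x,\mu_2)$ — this is the standard trick of writing a ruin-type probability as a ratio of two one-sided transforms. The third identity is the mirror image under $x\leftrightarrow$ reflection at the infimum. The holomorphy classes are then forced: $A,\cc A$ are entire because they are transforms of \emph{compactly supported} (in $[-x,0]$, resp.\ $[0,x]$) measures; $B$, $C$ are transforms of measures on $[0,\infty[$ hence holomorphic on $\{\Re\l>0\}$ and continuous up to the axis, and dually for $\cc B,\cc C$ on the left half-plane.

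For the \textbf{asymptotics and uniqueness}: the normalizations $\psi(\l)A(x,\l)\to 1$, $B(x,\l)\sim\psi(\l)$ as $\l\to+\infty$ and $\cc B(x,\l)\sim\cc\psi(\l)$ as $\l\to-\infty$ come from letting $x\to\infty$ (or, equivalently, dropping the constraint $S_t-I_t\le x$), in which limit the triple-transform degenerates to the one-sided transforms governed by $\psi,\cc\psi$ of Section~2 — e.g.\ $\cc A(\infty,\l)=\cc U(\l)=1/\cc\psi(\l)$ and $A(\infty,\l)=1/\psi(\l)$, and the $V_x$-identity degenerates to the overshoot formula of Proposition~\ref{Flu-}. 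Given two candidate sextuples, their ratios $A_1/A_2$, $\cc A_1/\cc A_2$ are entire, and the first identity forces $\cc A_1(x,\l_1)A_1(x,\l_2)=\cc A_2(x,\l_1)A_2(x,\l_2)$ identically, so $A_1/A_2=c(x)$, $\cc A_1/\cc A_2=1/c(x)$ for some constant $c(x)$; the asymptotic $\psi A\to1$ kills $c(x)=1$. The same separation-of-variables-plus-Liouville argument, applied to the $B,\cc C$ (resp.\ $C,\cc B$) pairs using that a function holomorphic on a half-plane, continuous on its closure, and of controlled growth is determined by its ratio being a bounded entire function once the two half-plane pieces are glued along $i\R$, yields uniqueness there, with the $\psi$- and $\cc\psi$-asymptotics fixing the remaining multiplicative constants.

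The \textbf{main obstacle} is the rigorous excursion-theoretic factorization of the \emph{constrained} double-transform $\E(\int_0^\zeta e^{-\l_1 I_t}1_{S_t-I_t\le x}e^{-\l_2(X_t-I_t)}dt)$ into a pure $\l_1$-factor times a pure $\l_2$-factor: unlike the classical (single-barrier) Wiener--Hopf identity, here the event $\{S_t-I_t\le x\}$ couples the running supremum and infimum, so one must argue that, under the normalized excursion measure of $X-I$ away from $0$, the contribution splits along the last visit to the current infimum, and that the "width" constraint $S-I\le x$ decouples in the Laplace domain exactly as the Liouville/Wiener--Hopf argument in Theorem~\ref{WH} requires. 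Making this decoupling precise — i.e.\ proving that the only entire functions interchanging the two identities are constants, and that those constants are fixed by the $\psi,\cc\psi$ asymptotics — is the heart of \cite{F10} and the step I would expect to occupy the bulk of the argument; everything downstream (Theorems~\ref{AccA}, \ref{BccC}, \ref{CccB}) is then a matter of identifying $A,\cc A,B,\cc B,C,\cc C$ with the explicit determinantal ratios built from the matrices of Section~3.
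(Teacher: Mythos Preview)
The paper's own proof is minimal: it cites Propositions~6.1, 6.2 and Theorem~4.2 of \cite{F10} for existence and asymptotics, and declares uniqueness ``obvious.'' Your separation-of-variables argument for uniqueness (the product identity forces $A_1/A_2$ and $\cc A_1/\cc A_2$ to be reciprocal constants, killed by the normalization; likewise for the $B,\cc C$ and $C,\cc B$ pairs) is exactly what ``obvious'' unpacks to and is correct.

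Your existence sketch, however, differs in spirit from what \cite{F10} actually does. You propose a direct excursion-theoretic factorization. Looking ahead to the proof of Proposition~\ref{poly}, one sees that the machinery imported from \cite{F10} is a \emph{matrix Riemann--Hilbert} formulation: the six functions are assembled into a $2\times 2$ matrix $M(x,\l)$, defined piecewise on the two half-planes, with $\det M\equiv 1$ and jump relation $M^+(x,iu)=M^-(x,iu)\bigl(\begin{smallmatrix}0&-1\\1&\phi(iu)\end{smallmatrix}\bigr)$ on $i\R$. The analyticity classes and the asymptotics are read off from this structure, not from an excursion decomposition per se. Your heuristic is not wrong as motivation, but it is a different route.

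There is also a gap in your asymptotics paragraph. You justify $\psi(\l)A(x,\l)\to 1$ and $B(x,\l)\sim\psi(\l)$ by ``letting $x\to\infty$,'' so that the constraint $S_t-I_t\le x$ disappears and one recovers the one-sided Wiener--Hopf transforms. But the statement is about $\l\to+\infty$ with $x$ \emph{fixed}; the $x\to\infty$ limit is a different regime and does not by itself give the large-$\l$ behavior for finite $x$. The correct reason (implicit in Proposition~\ref{behav}) is that $A(x,\cdot)$ is the transform of a compactly supported measure whose behavior near the origin is unaffected by the width constraint, so its large-$\l$ asymptotics match those of $1/\psi$. Your heuristic identifies the right target but does not supply the argument.
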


 \begin{proof}
 
 The existence of the  6 functions $A$,$\cc A$, $B$, $\cc B$, $C$, $\cc C$ has been settled in the  general setting of a L\'evy process in proposition 6.1 and proposition 6.2 of \cite{F10} , the behavior of the functions  is given in theorem 4.2 of \cite{F10}.  The uniqueness is obvious.

\end{proof} 
More over, we know  from theorem 4.2 of  \cite{F10} (or we can deduce them from  proposition  \ref{funct}),   the behavior of the six functions as settled in next proposition

\begin{proposition}\label{behav}

The functions of $\l$, $A(x,\l)$,  $e^{-\l x} \cc A(x,\l)$ , ${\cc B(x,\l)\over |\l|+1}$, $e^{\l x}C(x,\l)$ are bounded on $\{Re(\l)\geq 0\}$.  

The functions $\cc A(x,\l)$,  $e^{\l x} A(x,\l)$, ${\cc B(x,\l)\over |\l|+1}$, $e^{-\l x}\cc C(x,\l)$ are bounded on $\{Re(\l)\leq 0\}$. 

For $\Re(\l)\to +\i$  $e^{\l x}C(x,\l)\to 0$ if $\psi(\l)$ is bounded, and  for $\Re(\l)\to -\i$,  $e^{-\l x}\cc C(x,\l)\to 0$   if $\cc\psi(\l)$ is bounded.

\end{proposition}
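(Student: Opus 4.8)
The plan is to derive Proposition~\ref{behav} entirely from the three Laplace-transform identities of Proposition~\ref{funct} and the asymptotics recorded there, using only elementary estimates on moduli. The starting observation is that every left-hand side is the Laplace transform (bivariate, in the first case) of a \emph{finite positive} measure --- an occupation measure, resp.\ a sub-probability measure --- and that on the events involved the relevant path functionals are confined to bounded or half-bounded ranges. Since $S_0=0$, on $\{S_t-I_t\le x\}$ one has $I_t\in[-x,0]$, $S_t\in[0,x]$, $X_t-I_t\in[0,x]$ and $X_t-S_t\in[-x,0]$; on $\{V_x<\z\}$ one has $S_{V_x}\ge0$ and $X_{V_x}-S_{V_x}\le-x$; and on $\{V^x<\z\}$ one has $I_{V^x}\le0$ and $X_{V^x}-I_{V^x}\ge x$. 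Finiteness of the occupation measures is already built into Proposition~\ref{funct}, and when $\phi(0)>0$ the bounds below will in fact be uniform in the killing rate, being dominated by $\E(\int_0^\z 1_{S_t-I_t\le x}\,dt)$, resp.\ by $\P(V_x<\z)\le1$ and $\P(V^x<\z)\le1$.

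For the four bounds on $A$ and $\cc A$, I would multiply the first identity by $e^{\pm\l_1 x}$ and/or $e^{\pm\l_2 x}$ and bound the modulus of the integrand: for $\Re(\l_1)\le0$, $|e^{-\l_1 I_t}|\le1$, while for $\Re(\l_1)\ge0$, $|e^{-\l_1 x}e^{-\l_1 I_t}|=e^{-\Re(\l_1)(x+I_t)}\le1$ because $x+I_t\ge0$; for $\Re(\l_2)\ge0$, $|e^{-\l_2(X_t-I_t)}|\le1$, while for $\Re(\l_2)\le0$, $|e^{\l_2 x}e^{-\l_2(X_t-I_t)}|=e^{\Re(\l_2)(x-(X_t-I_t))}\le1$ because $0\le X_t-I_t\le x$. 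Each such choice makes the modified integrand $\le 1_{S_t-I_t\le x}$, so the modified right-hand side, which is a product of a factor in $\l_1$ and a factor in $\l_2$, is bounded on the appropriate product of closed half-planes. I would then separate the two variables: $A(x,\cdot)$ is nonzero at large real arguments because $\psi(\l)A(x,\l)\to1$, and $\cc A(x,0)\ne0$ because $\cc A(x,0)A(x,0)=\E(\int_0^\z 1_{S_t-I_t\le x}\,dt)>0$; freezing one variable at such a point turns the bivariate bounds into the asserted estimates for $A(x,\l)$, $e^{-\l x}\cc A(x,\l)$ on $\{\Re(\l)\ge0\}$ and for $\cc A(x,\l)$, $e^{\l x}A(x,\l)$ on $\{\Re(\l)\le0\}$.

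The same recipe handles $C$ and $\cc C$. From the $V_x$-identity, for $\Re(\mu_1)\le0$ and $\Re(\mu_2)\ge0$ one has $|e^{-\mu_1 x}e^{-\mu_1(X_{V_x}-S_{V_x})}|=e^{-\Re(\mu_1)(x+X_{V_x}-S_{V_x})}\le1$ (since $x+X_{V_x}-S_{V_x}\le0$) and $|e^{-\mu_2 S_{V_x}}|\le1$, so $|e^{-\mu_1 x}\cc C(x,\mu_1)|\le|B(x,\mu_2)|\,\P(V_x<\z)$; freezing $\mu_2$ at a large real value where $B(x,\mu_2)\ne0$ (available since $B\sim\psi$ at $+\i$) gives $e^{-\l x}\cc C(x,\l)$ bounded on $\{\Re(\l)\le0\}$. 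Mirroring this with the $V^x$-identity and the inequality $X_{V^x}-I_{V^x}\ge x$ gives $e^{\l x}C(x,\l)$ bounded on $\{\Re(\l)\ge0\}$. For the two limit statements I would use the Remark after Theorem~\ref{WH}: if $\psi$ is bounded then $X$ is of bounded variation with non-positive drift, hence does not creep upwards, so on $\{V^x<\z\}$ the overshoot $Y:=X_{V^x}-I_{V^x}-x$ is a.s.\ strictly positive; writing $e^{\l x}C(x,\l)=\cc B(x,\mu_1)\,\E(e^{-\mu_1 I_{V^x}}e^{-\l Y}1_{V^x<\z})$ for a fixed $\mu_1$ with $\Re(\mu_1)\le0$ and $\cc B(x,\mu_1)\ne0$, and observing that the integrand is dominated by $1$ for $\Re(\l)\ge0$, dominated convergence forces $e^{\l x}C(x,\l)\to0$ as $\Re(\l)\to+\i$. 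The statement for $e^{-\l x}\cc C(x,\l)$ is the exact mirror image, using that ``$\cc\psi$ bounded'' makes the undershoot at $V_x$ strict.

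The remaining two assertions --- that $\cc B(x,\l)/(|\l|+1)$ and $B(x,\l)/(|\l|+1)$ are bounded --- are where I expect the real difficulty. In the identities above $B$ and $\cc B$ sit only in denominators, so these identities give a \emph{lower} bound $|\cc B|,|B|\ge c>0$ on the relevant half-planes but no upper bound. One must instead combine the asymptotics $\cc B(x,\l)\sim\cc\psi(\l)$ at $-\i$ and $B(x,\l)\sim\psi(\l)$ at $+\i$ with the elementary facts that $\psi$ and $\cc\psi$ are rational of degree at most one (so $\psi(\l)/(|\l|+1)$ and $\cc\psi(\l)/(|\l|+1)$ are bounded on the relevant closed half-plane), that $B$ and $\cc B$ are continuous on that closed half-plane and holomorphic in its interior, and a Phragm\'en--Lindel\"of argument to promote the real-axis asymptotic to a uniform half-plane bound. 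That last step needs an a priori subexponential control of $B$, $\cc B$ near the imaginary axis; supplying it cleanly is the crux, and it is exactly what Theorem~4.2 of \cite{F10} provides, so in practice one quotes that reference here.
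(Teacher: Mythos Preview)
Your proposal is considerably more detailed than what the paper does: the paper does not prove Proposition~\ref{behav} at all, it simply refers to Theorem~4.2 of \cite{F10} (with the parenthetical remark that the bounds ``can be deduced from proposition~\ref{funct}''). What you have written is, in effect, a careful execution of that parenthetical remark, and for $A$, $\cc A$, $C$, $\cc C$ your modulus estimates and variable-freezing argument are correct and self-contained; the dominated-convergence argument for the two limit statements, using that $\psi$ bounded (resp.\ $\cc\psi$ bounded) forces the overshoot at $V^x$ (resp.\ the undershoot at $V_x$) to be strictly positive, is also fine and matches the remark the paper makes just after the proposition.

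There is one slip and one honest admission worth flagging. The slip: you write that ``$\psi$ and $\cc\psi$ are rational of degree at most one''. Under Assumption~\ref{Ass} only $\psi$ is rational (Theorem~\ref{WH}); $\cc\psi=\phi/\psi$ involves $\phi^-$, the exponent of an arbitrary spectrally negative L\'evy process, and is in general not rational. What you need instead is the general fact that any (opposite-of-)subordinator exponent satisfies $|\cc\psi(\l)|\le C(|\l|+1)$ on its half-plane of analyticity --- this is the right replacement input for your Phragm\'en--Lindel\"of sketch. The admission: you correctly observe that the identities of Proposition~\ref{funct} only place $B$ and $\cc B$ in denominators, so they yield lower bounds but no upper bound, and that promoting the real-axis asymptotics $B\sim\psi$, $\cc B\sim\cc\psi$ to a half-plane estimate requires an a~priori growth control that you cannot extract from Proposition~\ref{funct} alone. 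This is exactly why the paper, and ultimately you, fall back on \cite{F10} for that part.
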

{\bf Remark} The property "$\psi(\l)$ is bounded" means that $\psi$ is the exponent of a compound  Poisson process and 
this is equivalent to the property  (see chapter 6 of \cite{B96} that the first time the L\'evy process  $X$  visits  $]0,+\i[$ is non zero a.s. (here it is also equivalent to the condition $m=n$).

In the next lemma, we state that the determination  of these six functions reduces to  the computation of four polynomials.

 \begin{proposition}  \label{poly}
 
 There exist unique polynomials of $\l$, $P_1(x,\l)$, $Q_1(x,\l)$, $P_2(x,\l)$, $Q_2(x,\l)$, with  
 $$\deg P_1=n\quad \deg P_2=m\quad \deg Q_1\leq m-1\quad \deg Q_2\leq m-1,
 $$ such that 
 
 $$A(x,\l) ={P_1(x,\l)+ Q_1(x,\l)e^{-\l x}\cc U_{[-x,0]}(\l)\over \Pi_1^m(\l+\beta_i)}$$
 $$ \cc A(x,\l)={Q_2(x,\l)e^{\l x}+ P_2(x,\l)\cc U_{[-x,0]}(\l)\over \Pi_1^m(\l+\beta_i)}$$
 $$B(x,\l)={P_2(x,\l)\over  \Pi_1^n(\l+\gamma_j)}$$
 $$\cc B(x,\l)=\cc\psi(\l) {(P_1(x,\l)- Q_1(x,\l)e^{-\l x} \cc U_{]-\i, -x[}(\l))\over \Pi_1^n (\l+\gamma_j)}$$
 $$C(x,\l)={e^{-\l x}Q_1(x,\l)\over  \Pi_1^n (\l+\gamma_j)}$$
 $$ \cc C(x,\l)=\cc\psi(\l).{(P_2(x,\l) \cc U_{]-\i, -x[}(\l)- Q_2(x,\l)e^{\l x}) \over \Pi_1^n (\l+\gamma_j)}$$
 
\end{proposition}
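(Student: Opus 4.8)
The idea is to confront the analytic description of the six functions, recalled from \cite{F10} in Propositions \ref{funct} and \ref{behav}, with the algebraic rigidity coming from Assumption \ref{Ass}: there $\psi$ is the explicit rational function of Theorem \ref{WH}, and $\phi=\psi\cc\psi$ is meromorphic on $\C$ with poles exactly at $-\gamma_1,\dots,-\gamma_n$, of the indicated multiplicities. Beyond the probabilistic identities of Proposition \ref{funct}, \cite{F10} links the six functions on $i\R$ (where all are defined and continuous) by the two scalar identities
$$\phi(\l)\,A(x,\l)=C(x,\l)+\cc B(x,\l),\qquad \phi(\l)\,\cc A(x,\l)=B(x,\l)-\cc C(x,\l)$$
(these can also be obtained from Proposition \ref{funct} by a strong Markov argument at the time $U_x$). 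Hence the triple $(A,C,\cc B)$ carries only two functions' worth of independent information, and likewise $(\cc A,B,\cc C)$, and the proposition pins this information down as four polynomials. Together with $\cc U=\cc U_{[-x,0]}+\cc U_{]-\i,-x[}$ and $\int e^{-\l y}\cc U(dy)=\cc\psi(\l)^{-1}$ on $\{\Re\l<0\}$, these are the only ingredients needed.

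\smallskip I would first treat $Q_1$. Since $A$ is entire, $\cc B$ holomorphic on $\{\Re\l<0\}$ and $\phi$ meromorphic on $\C$ with poles among the $-\gamma_j$ of order at most their multiplicity, the identity $C=\phi A-\cc B$ extends $C(x,\cdot)$ meromorphically across $i\R$ to all of $\C$, with poles among the $-\gamma_j$ of order at most their multiplicity. Thus $Q_1(x,\l):=e^{\l x}\,\Pi_1^{n}(\l+\gamma_j)\,C(x,\l)$ is entire. It is polynomially bounded: on $\{\Re\l\ge0\}$ it equals $\Pi_1^n(\l+\gamma_j)(e^{\l x}C)$ with $e^{\l x}C$ bounded there, so $Q_1=O(|\l|^{n})$; on $\{\Re\l\le0\}$ it equals $\Pi_1^{n}(\l+\gamma_j)\phi(\l)(e^{\l x}A)-\Pi_1^{n}(\l+\gamma_j)e^{\l x}\cc B$, where $e^{\l x}A$ and $\cc B/(|\l|+1)$ are bounded and $\phi=O(|\l|^{2})$. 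By Liouville $Q_1(x,\cdot)$ is a polynomial; being $O(|\l|^{n})$ on a half-plane forces $\deg Q_1\le n$, and when $m=n$ the sharper statement of Proposition \ref{behav} that $e^{\l x}C\to0$ at $+\i$ (since $\psi$ is then bounded) lowers it to $\le n-1$. In either case $\deg Q_1\le m-1$, and $C=e^{-\l x}Q_1/\Pi_1^n(\l+\gamma_j)$ is the claimed form.

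\smallskip Next, put $P_1(x,\l):=\Pi_1^{n}(\l+\gamma_j)\bigl[\psi(\l)A(x,\l)-\cc U_{[-x,0]}(\l)C(x,\l)\bigr]$. As $\psi A$ has poles among the $-\gamma_j$ and $\cc U_{[-x,0]}$ is entire, $P_1(x,\cdot)$ is entire; it is polynomially bounded (on $\{\Re\l<0\}$ rewrite it, using $\cc U_{[-x,0]}=\cc\psi^{-1}-\cc U_{]-\i,-x[}$ and $\psi\cc\psi=\phi$, as $\Pi_1^n(\l+\gamma_j)[\cc B/\cc\psi+\cc U_{]-\i,-x[}(\l)C]$, where the exponential growth of $C$ is absorbed by the exponential decay of $\cc U_{]-\i,-x[}$), hence polynomial. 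From $\Pi_1^m(\l+\beta_i)A=P_1+Q_1e^{-\l x}\cc U_{[-x,0]}(\l)$ (immediate from the definitions via $\psi\,\Pi_1^n(\l+\gamma_j)=\Pi_1^m(\l+\beta_i)$) one reads off the claimed formula for $A$, and then $\cc B=\phi A-C$ gives the claimed formula for $\cc B$; moreover the normalisation $\psi A\to1$ with the boundedness of $A$ on $\{\Re\l\ge0\}$ forces $\deg P_1=n$ with a determined leading coefficient (a higher degree makes $A$ grow, a lower one makes $\psi A\to0$). The pair $(P_2,Q_2)$ and the formulas for $\cc A$, $B$, $\cc C$ are produced identically from the triple $(\cc A,B,\cc C)$ and its relation $B=\phi\cc A+\cc C$, the half-planes exchanging roles; here $\deg P_2=m$ (leading coefficient $1$) is fixed by $B\sim\psi$, and $\deg Q_2\le m-1$ as before. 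Uniqueness is clear: the six functions are unique (Proposition \ref{funct}) and the displayed formulas invert to give $Q_1,P_1,Q_2,P_2$.

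\smallskip The main obstacle is precisely the growth analysis on the ``wrong'' half-plane. There $A(x,\cdot)$ and $C(x,\cdot)$ blow up like $e^{x|\Re\l|}$ (only $\cc A$, $\cc C$, $\cc B/(|\l|+1)$ and $e^{\l x}A$ stay bounded on $\{\Re\l\le0\}$, by Proposition \ref{behav}), so each polynomial bound depends on an exact cancellation of exponentials: multiplying by $e^{\l x}$ to recover the bounded quantity $e^{\l x}A$, or balancing the blow-up against the exponentially small $\cc U_{]-\i,-x[}(\l)=O(e^{x\Re\l})$ while exploiting that $\cc U_{[-x,0]}$ stays bounded on $\{\Re\l\le0\}$. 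Carrying this through so as to land on \emph{exactly} the degrees $\deg P_1=n$, $\deg P_2=m$, $\deg Q_1,\deg Q_2\le m-1$, and correctly separating the cases $m=n$ and $m=n+1$ via the dichotomy ``$\psi$ (resp.\ $\cc\psi$) bounded or not'' of Proposition \ref{behav}, is the technical core. A secondary subtlety is that the \emph{same} pair $(P_1,Q_1)$ must simultaneously yield $A$, $C$ and $\cc B$: this is invisible from their separate analytic profiles and is forced by the identity $\phi A=C+\cc B$, which therefore has to be used, not merely invoked to continue $C$.
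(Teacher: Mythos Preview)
Your scalar approach is essentially correct and is the ``unpacked'' version of what the paper does. The paper works with the $2\times2$ matrix $M(x,\l)$ (equal to $\bigl(\begin{smallmatrix}A&-C\\\cc A&B\end{smallmatrix}\bigr)$ on the right and $\bigl(\begin{smallmatrix}\cc B&A\\-\cc C&\cc A\end{smallmatrix}\bigr)$ on the left) and uses the jump relation $M^+=M^-\bigl(\begin{smallmatrix}0&-1\\1&\phi\end{smallmatrix}\bigr)$ from \cite{F10}. It then writes down a concrete matrix $N(x,\l)$, built out of $\cc U_{[-x,0]}$, $\cc U_{]-\i,-x[}$, $\cc\psi$ and the two diagonal polynomial factors, that satisfies the \emph{same} jump. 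Hence $NM^{-1}$ is entire, and the growth bounds of Proposition~\ref{behav} force $NM^{-1}=\bigl(\begin{smallmatrix}P_2&-e^{-\l x}Q_1\\-e^{\l x}Q_2&P_1\end{smallmatrix}\bigr)$ with polynomial entries; expanding $M=R^{-1}N$ gives all six displayed formulas at once. Your two scalar identities $\phi A=C+\cc B$ and $\phi\cc A=B-\cc C$ are exactly the off-diagonal entries of the jump relation, so you are doing the same analytic continuation entry by entry. The matrix packaging mainly buys bookkeeping economy: one Liouville argument instead of four, and the four polynomials and six formulas fall out simultaneously.

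One point where the matrix route gives something your sketch does not quite reach: $\det M=1$ translates into $\det R=\det N$, i.e.\ the polynomial identity $P_1P_2-Q_1Q_2=\Pi_1^m(\l+\beta_i)\,\Pi_1^n(\l+\gamma_j)$. This pins down $\deg P_1=n$ (and the leading coefficient) once $\deg P_2=m$ and $\deg Q_1,\deg Q_2\le m-1$ are known, with no further analysis. Your argument ``a lower degree makes $\psi A\to0$'' is not airtight when $m=n+1$: there $\deg Q_1$ may equal $n$, and $\psi A=P_1/\Pi(\l+\gamma_j)+[Q_1/\Pi(\l+\gamma_j)]\cdot e^{-\l x}\cc U_{[-x,0]}(\l)$, so ruling out $\deg P_1<n$ needs $e^{-\l x}\cc U_{[-x,0]}(\l)\to0$ at $+\i$, i.e.\ $\cc U(\{-x\})=0$. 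That is true here, but the paper only obtains it (via Proposition~\ref{ccu}) \emph{after} the present proposition. The determinant identity sidesteps this. Similarly, your ``identically'' for $(P_2,Q_2)$ hides an asymmetry: $\cc\psi$ is not rational, so the mirror of your $Q_1$ construction does not apply verbatim to $\cc C$; the cleanest route is to define $P_2:=\Pi(\l+\gamma_j)B$ (extended to the left via $B=\phi\cc A+\cc C$) and then $Q_2:=e^{-\l x}[\Pi(\l+\beta_i)\cc A-P_2\cc U_{[-x,0]}(\l)]$, with the degree bound on $Q_2$ coming from the behaviour of $e^{-\l x}\cc C$ as $\Re\l\to-\i$ together with $\l e^{-\l x}\cc U_{]-\i,-x[}(\l)$ staying bounded---which is again Proposition~\ref{ccu}, proved by the paper inside the same argument.
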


\begin{proof} {\it  of proposition \ref {poly} , proposition \ref{ccu},  }
We have seen in \cite{F10} theorem 4.2 that the matrix $M(x,\l)$ defined as follows
$$M(x,\l):=\left( \begin{array} {clcr} A(x,\l) & -C(x,\l) \\ \cc A(x,\l)  & B(x,\l)\end{array} \right)\hbox{ if } \quad \Re (\l)>0,$$

$$M(x,\l):=\left( \begin{array} {clcr} \cc B(x,\l) & A(x,\l) \\ -\cc C(x,\l)  & \cc A(x,\l)\end{array} \right)\hbox{ if } \quad \Re (\l)<0.$$
satisfies the identity 

\begin{equation} \label {WHM} M^+(x,iu)=M^-(x,iu) \left( \begin{array} {clcr} 0 & -1\\ 1& \phi(iu) \end{array} \right)\end{equation}

Moreover, $\det M(x,\l)=1$, and $M$ is invertible.
On the other hand, one can easily check that the matrix 
$$N(x,\l)=\left( \begin{array} {clcr}  1 &0 \\  \cc U_{[-x,0]}(\l)  & 1\end{array} \right)\left( \begin{array} {clcr}  \Pi_1^m(\l+\gamma_j)  & 0 \\ 0 & \Pi_1^m(\l+\beta_i)\end{array} \right)
\hbox{for} \quad \Re (\l)>0$$

$$N(x,\l)=\left( \begin{array} {clcr}  \cc \psi(\l) & 1 \\ - \psi(\l).\cc U_{]-\i,-x[}(\l)& \cc U_{[-x,0]}(\l)   \end{array} \right)\left( \begin{array} {clcr}  \Pi_1^m(\l+\beta_i)  &0 \\ 0 & \Pi_1^n(\l+\gamma_j)\end{array} \right)
\hbox{for} \quad \Re (\l)<0$$

satisfies the same  identity \ref{WHM}.

Thus, the product $N(x,\l)M^{-1}(x,\l)$ is entire. One can see easily that, because of the boundary conditions fulfilled by the components of $M(x,\l)$ given in
proposition \ref {behav},  that this matrix $N(x,\l)M^{-1}(x,\l)$ is of the form :

$$N(x,\l)M^{-1}(x,\l)=\left( \begin{array} {clcr} P_2(x,\l) & -e^{-\l x}Q_1(x,\l)\\ 
-e^{\l x} Q_2(x,\l)&P_1(x,\l)\end{array} \right)=:R(x,\l)$$

Where $P_1$, $Q_1$, $P_2$ and $Q_2$ are polynomials of $\l$.
Then $$\det R=\det N=\Pi_1^m(\l+\beta_i)\Pi_1^n(\l+\gamma_j), $$ and 
we  obtain 
$$M(x,\l)=R^{-1}(x,\l).N(x,\l)$$
$$=\left( \begin{array} {clcr} P_1(x,\l) & e^{-\l x}Q_1(x,\l)\\ 
e^{\l x}Q_2(x,\l) &P_2(x,\l)\end{array} \right)\times 
\left(\begin{array} {clcr}  1 &0 \\  \cc U_{[-x,0]}(\l)  & 1\end{array} \right)$$
$$\times \left( \begin{array} {clcr}  {1\over \Pi_1^m(\l+\beta_i)}  &0 \\ 0 & {1\over \Pi_1^n(\l+\gamma_j)}\end{array} \right)\hbox{for} \quad \Re (\l)>0$$

$$=\left( \begin{array} {clcr} P_1(x,\l) & e^{-\l x}Q_1(x,\l)\\ 
e^{\l x} Q_2(x,\l) &P_2(x,\l)\end{array} \right)\times
\left( \begin{array} {clcr}  \cc \psi(\l) & 1 \\  -\cc\psi(\l).\cc U_{]-\i,-x[}(\l)&\cc U_{[-x,0]}(\l)   \end{array} \right)$$
$$\times \left( \begin{array} {clcr}  {1\over \Pi_1^n(\l+\gamma_j)}  &0 \\ 0 & {1\over \Pi_1^m(\l+\beta_i)}\end{array} \right)\hbox{for} \quad \Re (\l)<0$$

Developping each term of this matrix product, we obtain the identities of the theorem. Now, according to proposition \ref {behav}, $e^{-\l x} C(x,\l)$ is bounded on $\{\Re(\l)>0\}$ and goes to $0$ if $\psi$ is bounded (that is if $m=n$), that gives that $\deg Q_1\leq m-1$. Futhermore, the equivalence of the proposition \ref{funct} $B(x,\l)\sim \psi(\l)$ gives  that $P_2(x,\l)=\l^m$ for $\l\to +\i$. 
Since  $\psi(\l)A(x,\l)\to 1$, we deduce that $P_1(\l)\sim \l^n$ for $\l\to +\i$.

Also $e^{-\l x} \cc C(x,\l)$ is bounded for $\Re(\l)\to +\i$ and goes to $0$ if $\cc\psi(\l)$ is bounded. This is equivalent to say that 

$${P_2(x,\l) e^{-\l x}\cc U_{]-\i,-x[}(\l)-Q_2(x,\l)\over \Pi_{j=1}^n(\l+\gamma_j)}\to 0\quad\hbox{if}\quad  \Re(\l)\to -\i$$

In other words, \begin{equation} P_2(x,\l) e^{-\l x}\cc U_{]-\i,-x[}(\l)-Q_2(x,\l)=o(\l^n)\label{ln} \end{equation}

If $m=n$ then $P_2(x,\l)\sim \l^n$ and we have clearly that  $e^{-\l x}\cc U_{]-\i,-x[}(\l)\to 0$ These two facts put together with  property (\ref{ln}) give us that $Q_2(x,\l)=o(\l^n)$ and so, 

$$\deg Q_2(x,\l)\leq n-1=m-1$$

If $m=n+1$, then $P_2(x,\l)\sim \l^{n+1}$ and   $P_2(x,\l) e^{-\l x}\cc U_{]-\i,-x[}(\l)=o(\l^{n+1})$ then, with property (\ref{ln}),  we deduce that $Q_2(x,\l)=o(\l^{n+1})$, in other words,   

$$\deg Q_2(x,\l)\leq n=m-1$$

The uniqueness of the 4 polynomials come from the uniqueness of the 6 functions. This finish the proof of proposition \ref{poly}. 

Now, when $m=n+1$ and when looking more precisely the preceding identities, since $\deg Q_2(x,\l)\leq n$, we have that ${Q_2(x,\l)\over \Pi_{j=1}^n(\l+\gamma_j)}$ has a finite limit. Since 
$${P_2(x,\l) e^{-\l x}\cc U_{]-\i,-x[}(\l)-Q_2(x,\l)\over \Pi_{j=1}^n(\l+\gamma_j)}\to 0 \qquad \l \to -\i $$ 
we deduce that 

$${P_2(x,\l) e^{-\l x}\cc U_{]-\i,-x[}(\l)\over \Pi_{j=1}^n(\l+\gamma_j)}\hbox{ has a finite limit when } \l \to -\i$$

Since $P_2(x,\l) \sim \l^{n+1}$ and $\Pi_{j=1}^n(\l+\gamma_j)\sim \l^n$,  we obtain that 

$\l e^{-\l x}\cc U_{]-\i,-x[}(\l)$ has a finite limit ($-\cc u(-x)$)  when $\l \to -\i$.  That proves proposition \ref{ccu}.  Now since ${P_2(x,\l) e^{-\l x}\cc U_{]-\i,-x[}(\l)-Q_2(x,\l)\over \Pi_{j=1}^n(\l+\gamma_j)}$  goes to $0$( for  we deduce that ${Q_2(x,\l)\over \l^n}\to -\cc u(-x)$. We state this property in the next lemma. \end{proof}

\begin{lemma} \label{q2} $$\lim_{|\l|\to+\i}{Q_2(x,\l)\over \l^n}= -\cc u(-x)$$

\end{lemma}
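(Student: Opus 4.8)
The goal is to identify the limiting behavior of $Q_2(x,\l)/\l^n$ as $|\l|\to+\i$, in the case $m=n+1$. I would follow the thread already established in the proof of proposition \ref{poly}. The essential ingredients are the boundary condition on $\cc C(x,\l)$ coming from proposition \ref{behav}, namely that $e^{-\l x}\cc C(x,\l)$ is bounded on $\{\Re(\l)\le 0\}$ (and $\to 0$ when $\cc\psi$ is bounded), and the explicit formula $\cc C(x,\l)=\cc\psi(\l)\bigl(P_2(x,\l)\cc U_{]-\i,-x[}(\l)-Q_2(x,\l)e^{\l x}\bigr)/\Pi_1^n(\l+\gamma_j)$ from proposition \ref{poly}.

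The plan is as follows. First I would recall, as already derived just above, that $e^{-\l x}\cc C(x,\l)$ bounded for $\Re(\l)\to-\i$ translates (after dividing by $\cc\psi(\l)$, which grows at least linearly so $1/\cc\psi(\l)$ is bounded there) into the statement $P_2(x,\l)e^{-\l x}\cc U_{]-\i,-x[}(\l)-Q_2(x,\l)=o(\l^n)$, i.e.\ identity (\ref{ln}). Second, in the case $m=n+1$ we already know $\deg Q_2\le n$, so $Q_2(x,\l)/\l^n$ has a finite limit; combining (\ref{ln}) with $\Pi_1^n(\l+\gamma_j)\sim\l^n$ shows that $P_2(x,\l)e^{-\l x}\cc U_{]-\i,-x[}(\l)/\l^n$ has a finite limit equal to $\lim Q_2(x,\l)/\l^n$. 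Third, since $P_2(x,\l)\sim\l^{n+1}$ (from $B(x,\l)\sim\psi(\l)$ in proposition \ref{funct}), the quantity $\l\, e^{-\l x}\cc U_{]-\i,-x[}(\l)$ has a finite limit, which by proposition \ref{ccu} is precisely $-\cc u(-x)$. Putting these together,
$$\lim_{|\l|\to+\i}\frac{Q_2(x,\l)}{\l^n}=\lim_{|\l|\to+\i}\frac{P_2(x,\l)}{\l^{n+1}}\cdot\l\, e^{-\l x}\cc U_{]-\i,-x[}(\l)=1\cdot(-\cc u(-x))=-\cc u(-x).$$

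The main subtlety is that the limits above are a priori taken along $\Re(\l)\to-\i$ (that is where the relevant boundedness/vanishing of $\cc C$ lives and where $\cc U_{]-\i,-x[}(\l)$ is even defined), whereas the lemma asserts a limit as $|\l|\to+\i$ without restriction. This is resolved by observing that $Q_2(x,\l)$ is a \emph{polynomial} of degree at most $n$: for a polynomial, the coefficient of $\l^n$ — hence the limit of $Q_2(x,\l)/\l^n$ — is the same no matter how $|\l|\to+\i$, and it has already been pinned down to $-\cc u(-x)$ by letting $\l\to-\i$ along the reals. So the step that does the real work is the identification of the leading coefficient of $Q_2$ via (\ref{ln}) and proposition \ref{ccu}; the passage from a one-directional limit to an unrestricted one is then immediate from polynomiality. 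One should only be careful to note that $\cc u(-x)$ is well defined and finite for every $x\in]0,+\i[$, which is exactly the content of proposition \ref{ccu} established in the same argument.
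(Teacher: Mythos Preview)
Your argument is correct and follows essentially the same route as the paper: both combine identity (\ref{ln}) with $\deg Q_2\le n$ and $P_2(x,\l)\sim\l^{n+1}$ to identify the leading coefficient of $Q_2$ with $\lim_{\l\to-\i}\l\,e^{-\l x}\cc U_{]-\i,-x[}(\l)=-\cc u(-x)$. Your explicit remark that polynomiality of $Q_2$ upgrades the one-directional limit to $|\l|\to+\i$ is a nice clarification that the paper leaves implicit.
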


\section {Computation of $A$ and $\cc A$ and proof of theorem  \ref{AccA} and corollary \ref{AccAcor} }

\begin{proposition}\label{calA}
$$A(x,\l)={\left |\begin{array} {cc} 1 & {\bf l } \\  e^{-\l x}{\bf  m}(x,\l)&  {\bf W}(x) 
\end{array} \right |\over |W(x)|} \quad \hbox { when $m=n$,}$$

$$A(x,\l) ={\left |\begin{array} {cc} e^{-\l x}{\bf m}(x,\l) & {\bf W}(x)   \end{array} \right |\over |{\bf \tilde W}(x)| }\quad \hbox{ when } m=n+1 
$$
\end{proposition}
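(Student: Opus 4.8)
The starting point is Proposition~\ref{poly}, which has already reduced the problem to identifying the two polynomials $P_1(x,\cdot)$ and $Q_1(x,\cdot)$ entering
$$A(x,\l)=\frac{P_1(x,\l)+Q_1(x,\l)\,e^{-\l x}\cc U_{[-x,0]}(\l)}{\Pi_1^m(\l+\beta_i)},$$
with $P_1(x,\cdot)$ monic of degree $n$ and $\deg Q_1(x,\cdot)\le m-1$. The plan is to pin these coefficients down from two packets of analyticity conditions and then solve the resulting linear system by Cramer's rule. First, $A(x,\cdot)$ is entire (Proposition~\ref{funct}), so the numerator $\mathcal N(x,\l):=P_1(x,\l)+Q_1(x,\l)e^{-\l x}\cc U_{[-x,0]}(\l)$ must be divisible by $\Pi_1^m(\l+\beta_i)$; reading this root by root, with repeated roots accounted for through the exponents $m_i$, one gets $m$ linear equations stating that the $m_i$-th $\l$-derivative of $\mathcal N(x,\cdot)$ vanishes at $-\beta_i$, $i=1,\dots,m$. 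Second, by Proposition~\ref{poly} the companion function $\cc B(x,\l)=\cc\psi(\l)\,\dfrac{P_1(x,\l)-Q_1(x,\l)e^{-\l x}\cc U_{]-\i,-x[}(\l)}{\Pi_1^n(\l+\gamma_j)}$ is holomorphic on $\{\Re(\l)<0\}$; since $\cc\psi$ is holomorphic and non-vanishing on that half-plane, which contains every $-\gamma_j$, the numerator $P_1(x,\l)-Q_1(x,\l)e^{-\l x}\cc U_{]-\i,-x[}(\l)$ must vanish to order $n_j+1$ at $-\gamma_j$, giving $n$ further equations involving the $n_j$-th $\l$-derivative at $-\gamma_j$. (These conditions make sense because $\cc U_{]-\i,-x[}(\l)$ is defined, and $e^{-\l x}\cc U_{]-\i,-x[}(\l)$ holomorphic, on $\{\Re(\l)<0\}$.) Altogether this is a linear system of $m+n$ equations for the $m+n$ unknown coefficients ($n$ in $P_1$, $m$ in $Q_1$), non-degenerate by the uniqueness part of Proposition~\ref{funct} --- in particular $|{\bf W}(x)|\ne0$ (resp.\ $|{\bf \tilde W}(x)|\ne0$) will come out of this.

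The second step is the algebra. I would first use the $m$ $\beta$-equations to eliminate $P_1$, expressing $\mathcal N(x,\l)$, and hence $A(x,\l)=\mathcal N(x,\l)/\Pi_1^m(\l+\beta_i)$, as an affine functional of the coefficients of $Q_1$: this is a finite Hermite interpolation, and, using the removable singularity of $\dfrac{e^{\beta x}\cc U_{[-x,0]}(-\beta)-e^{-\l x}\cc U_{[-x,0]}(\l)}{\l+\beta}$ at $\l=-\beta$ together with the $(-\partial\beta)^{m_i}$ conventions of Proposition~\ref{inf}, it is exactly what brings in the column $e^{-\l x}{\bf m}(x,\l)$ (and, when $m=n$, the auxiliary $(1,{\bf l})$ row that carries the monic normalisation of $P_1$ and makes the numerator matrix square). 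Substituting this into the $n$ $\gamma$-equations then leaves a square linear system for the coefficients of $Q_1$; after rewriting the $\gamma$-equations with $\cc U_{]-\i,-x[}=\tfrac1{\cc\psi}-\cc U_{[-x,0]}$ on $\{\Re(\l)<0\}$ and dividing each by the appropriate $\Pi_k(\gamma_j-\gamma_k)$-type factor, its coefficient matrix is precisely ${\bf W}(x)$ when $m=n$ and ${\bf \tilde W}(x)=[{\bf w}(x)\ {\bf W}(x)]$ when $m=n+1$ --- the mixed derivatives $\partial_\beta^{m_i}\partial_\gamma^{n_j}$ of the Cauchy kernel $\frac{e^{\beta x}\cc U_{[-x,0]}(-\beta)+e^{\gamma x}\cc U_{]-\i,-x[}(-\gamma)}{\beta-\gamma}$ arising precisely from differentiating the vanishing conditions at the multiple roots. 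Cramer's rule on this square system, combined with the first step, then collapses to the claimed ratio of an $(n{+}1)\times(n{+}1)$ determinant over $|{\bf W}(x)|$ (if $m=n$) or over $|{\bf \tilde W}(x)|$ (if $m=n+1$). I would run the two cases in parallel; the only structural difference is that $\deg Q_1=n-1$ versus $\deg Q_1=n$, i.e.\ there is one more $\beta$-root, which lengthens the column ${\bf m}(x,\l)$ by one entry (so the numerator matrix is square without the $(1,{\bf l})$ row) and replaces $|{\bf W}(x)|$ by $|{\bf \tilde W}(x)|$ in the denominator.

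The step I expect to be the real work is purely the bookkeeping of the second paragraph: checking that, after the row-normalisations, the Cramer determinants are literally the matrices ${\bf W}(x)$, ${\bf \tilde W}(x)$ and the vectors ${\bf m}(x,\l)$, ${\bf l}$ as written, keeping the $\beta$- and $\gamma$-Leibniz expansions of the vanishing conditions and of the Cauchy kernel mutually consistent, and verifying that the scalar factors stripped off the rows recombine to give exactly $|{\bf W}(x)|$ or $|{\bf \tilde W}(x)|$ with no spurious constant; there is no remaining analytic difficulty once Propositions~\ref{funct} and~\ref{poly} are in hand. Once the formula for $A$ (and the companion one for $\cc A$) is established, Theorem~\ref{AccA} follows by substitution into the product formula $\cc A(x,\l_1)A(x,\l_2)$ of Proposition~\ref{funct}, and Corollary~\ref{AccAcor} by reading off the partial-fraction and convolution expansions of the resulting expressions.
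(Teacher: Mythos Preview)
Your proposal is correct and follows essentially the same route as the paper: entireness of $A$ plus holomorphy of $\cc B$ on $\{\Re(\l)<0\}$ together with the degree constraints on $P_1,Q_1$ yield a Cramer system whose matrix is ${\bf W}(x)$ (resp.\ ${\bf \tilde W}(x)$), and uniqueness in Proposition~\ref{poly} forces the determinant to be nonzero. The only cosmetic difference is that the paper parametrizes directly by $A(x,\l)=a_0(x)-\sum_i a_i(x)e^{-\l x}{\bf m}_i(x,\l)$, which builds the $\beta$-conditions into the ansatz and leaves only the $m$ unknowns $a_1,\dots,a_m$ subject to the $\gamma$-equations (plus, when $m=n+1$, the single equation $\sum_i a_i(x){\bf w}_i(x)=-1$ coming from $P_1(x,\l)\sim\l^n$); this bypasses your intermediate $(m{+}n)\times(m{+}n)$ system and the rewriting via $\cc U_{]-\i,-x[}=\tfrac1{\cc\psi}-\cc U_{[-x,0]}$, but the substance is identical.
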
   

\begin{proof} 

Recall   the equation of proposition \ref{funct} 
$$A(x,\l) ={1\over \Pi_1^m(\l+\beta_i)}\biggl(P_1(x,\l)+ Q_1(x,\l)e^{-\l x}\cc U_{[-x,0]}(\l)\biggr)$$
 Since $A(x,\l)$ is an entire function and because $\deg P_1=n$  and $\deg Q_1\leq m-1$  (see proposition \ref {poly}), then $A(x,\l)$ is necessarily of the next  form 
  
  $$A(x,\l)=a_0(x)-\sum_1^m a_i(x){\partial^{m_i} \over (-\partial \beta)^{m_i}}\Bigl[{e^{\beta x}\cc U_{[-x,0]}(-\beta)-e^{-\l x}\cc U_{[-x,0]}(\l)\over \beta+\l }\Bigr]_{\beta=\beta_i}$$
  $$=a_0(x)-\sum_1^m a_i(x)e^{-\l x}{\bf m}_i (x,\l)$$
 
for some coefficients $a_i(x)$, $i=0,1,\dots, m$ 
 
Thus, 
$$P_1(x,\l)=\Pi_1^m(\l+\beta_i)\biggl( a_0(x)-\sum_1^m a_i(x){\partial^{m_i} \over (-\partial \beta)^{m_i}} \Bigl[{e^{\beta x}\cc U_{[-x,0]}(-\beta)\over \l+\beta}\Bigr]_{\beta=\beta_i}\biggr)$$
 
$$Q_1(x,\l)=\Pi_1^m(\l+\beta_i)\biggl(\sum_1^m a_i(x){\partial^{m_i} \over (-\partial \beta)^{m_i}}\Bigl[{1\over \l+\beta}\Bigr]_{\beta=\beta_i}\biggr)$$

Now using  property  $P_1(x,\l)\sim \l^n$ of lemma  \ref {poly}, one obtain 

\begin{equation} \label{a0n} a_0(x)=1 \quad \hbox{ if }\quad m=n \end{equation}

\begin{equation}\label{a0m} a_0(x)=0  \quad \hbox{ and }\quad \sum_{i=1}^m a_i(x){\bf w}_i(x)=-1  \quad \hbox{ if }\quad m=n +1 \end{equation}

On the other  hand, take the identity 
$$\cc B(x,\l)={\cc \psi(\l)\over \Pi(\l+\gamma_j)}.\biggl(P_1(x,\l) - Q_1(x,\l)e^{-\l x}\cc U_{]-\i,-x[}(\l)\biggr),$$
and replace $P_1$ and $Q_1$ by the above expressions, we get 
$$\cc B(x,\l)=\cc \psi(\l){\Pi_1^m(\l+\beta_i)\over \Pi_1^n(\l+\gamma_j)}\biggl(a_0(x)-\sum_{i=1}^m a_i(x){\partial^{m_i} \over (-\partial \beta)^{m_i}}\Bigl[ {e^{\beta x}\cc U_{[-x,0]}(-\beta)+e^{-\l x}\cc U_{]-\i,-x[}(\l)\over \beta+\l})\Bigr]_{\beta=\beta_i}\biggr)$$

The function $\cc B(x,\l)$ is   holomorphic on  the left half plan,  in particular at points $\l=-\gamma_j$, this implies that 
for every $j=1,\dots, n$, 

$${\partial^{n_j} \over (\partial \lambda)^{n_j}}\sum_{i=1}^m a_i (x) {\partial^{m_i} \over (-\partial \beta)^{m_i} }\Bigl[{e^{\beta x}\cc U_{[-x,0]}(-\beta)+e^{-\l x}\cc U_{]-\i,-x[}(\l)\over \beta+\l})\Bigr]_{\beta=\beta_i,\lambda=-\gamma_j}={\partial^{n_j} \over (\partial \l)^{n_j}}[a_0(x)]_{\l=-\gamma_j}$$

In other words, $$\sum_{i=1}^m a_i (x) W_{i,j}(x)=1_{n_j=0} \qquad \hbox{ if }\quad m=n$$
And $$\sum_{i=1}^m a_i (x) W_{i,j}(x)=0 \qquad \hbox{ if } m=n+1 $$
These $n$ equations added to equation (\ref{a0m}) if $m=n+1$,   form a linear system of $m$ equations with  variables $a_i(x)$, $i=1,\dots, m$.  It is easy to be convinced that 
each solution of this systems brings a new couple of polynomials $P_1(x,\l)$ and $Q_1(x,\l)$. The uniquiness of such a couple leads to
the uniquiness of this solution. Thus the system is a Cramer System and the determinant of  the matrix ${\bf W}(x)$    for $m=n$ (resp. of the matrix  ${\tilde W} (x)$  for $m=n+1$) does not vanish. Finally we obtain, 

For $m=n$,

$$a_0(x)=1\qquad a_i(x)={|{\bf W}_i^{\bf l}(x)|\over |{\bf W}(x)|} \quad  \hbox{for}\quad i= 1,\dots, n $$
and  
$$A(x,\l) =a_0(x)-\sum_1^m a_i(x)e^{-\l x} {\bf m}_i(x,\l)={\left |\begin{array} {cc} 1 & {\bf 1} \\ e^{-\l x} {\bf  m} (x,\l)& {\bf  W}(x)
\end{array} \right |\over |{\bf W}(x)|}.$$

For $m=n+1$, $$a_0(x)=0\qquad a_i(x)=-{|{\bf W}_i(x)|\over |{\bf \tilde W} (x)|}   \quad  \hbox{for}\quad i= 1,\dots, m, $$ 
and 
$$A(x,\l) =-\sum_1^m a_i(x)e^{-\l x} {\bf m}_i(x,\l)={\left |\begin{array} {cc} e^{-\l x} {\bf  m} (x,\l)& {\bf W}(x)
\end{array} \right |\over  |{\bf \tilde W} (x)|}$$

\end{proof}

\begin{proposition}\label{calcA}
$$\cc A(x,\l)={\left |\begin{array} {cc} \cc U_{[-x,0]}(\l_1) & {\bf v} (x)  \\ {\bf  m}(x,\l_1)
& {\bf W}(x) \end{array} \right |\over |{\bf W}(x)|} \quad \hbox{if} \quad m=n $$

$$\cc A(x,\l) ={ \left |\begin{array} {ccc} \cc U_{[-x,0]}(\l_1)  & \cc u(-x) & {\bf v}(x) \\ 
{\bf  m}(x,\l_1) &   {\bf w} (x) & {\bf W}(x) \end{array}\right |  \over |{\bf \tilde W}(x)|}\quad \hbox{if} \quad m=n+1$$
\end{proposition}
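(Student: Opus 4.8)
The proof should run in close parallel to the proof of Proposition \ref{calA}, exploiting the expression for $\cc A(x,\l)$ from Proposition \ref{poly},
$$\cc A(x,\l)={Q_2(x,\l)e^{\l x}+ P_2(x,\l)\cc U_{[-x,0]}(\l)\over \Pi_1^m(\l+\beta_i)},$$
together with the companion identity
$$\cc C(x,\l)=\cc\psi(\l){P_2(x,\l)\cc U_{]-\i,-x[}(\l)- Q_2(x,\l)e^{\l x}\over \Pi_1^n(\l+\gamma_j)}.$$
First I would record the degree bounds: $\deg P_2=m$ and $\deg Q_2\le m-1$ from Proposition \ref{poly}. Since $\cc A(x,\l)$ is entire and its two numerator terms are (up to the polynomial factors) the Laplace transforms $e^{\l x}$ and $\cc U_{[-x,0]}(\l)$, the constraint that the $\beta_i$'s be removable forces $\cc A$ to have the shape of a linear combination of the building blocks $\cc U_{[-x,0]}(\l)$ and the ${\bf m}_i(x,\l)$ (after multiplying by $e^{-\l x}$ appropriately), exactly as in the $A$ case but with $\cc U_{[-x,0]}(\l)$ playing the role of the constant $1$ and the vector ${\bf v}(x)$ entering through the $\cc U_{]-\i,-x[}$ part. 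Concretely, I expect to write
$$\cc A(x,\l)=\cc a_0(x)\,\cc U_{[-x,0]}(\l)-\sum_{i=1}^m \cc a_i(x)\,{\bf m}_i(x,\l)$$
for suitable coefficients $\cc a_i(x)$, and then read off $P_2$ and $Q_2$ in terms of these coefficients and the entries ${\bf w}_i(x)$.

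Next I would impose the two sources of linear constraints on the $\cc a_i(x)$. The first comes from the asymptotics at $\l\to-\i$: using $P_2(x,\l)\sim\l^m$ and Lemma \ref{q2}, namely $Q_2(x,\l)/\l^n\to -\cc u(-x)$, one gets $\cc a_0(x)=1$ when $m=n$, and when $m=n+1$ one gets in addition a normalization of the form $\sum_i \cc a_i(x){\bf w}_i(x)=\cc u(-x)$ (the precise sign to be pinned down against Lemma \ref{q2} and the definition of ${\bf w}(x)$). The second source is the requirement that $\cc C(x,\l)$ be holomorphic at each $\l=-\gamma_j$: differentiating $n_j$ times and evaluating at $-\gamma_j$ kills the $\cc\psi(\l)/\Pi(\l+\gamma_j)$ pole only if the numerator vanishes there, which yields exactly the equations $\sum_{i=1}^m \cc a_i(x){\bf W}_{i,j}(x)={\bf v}_j(x)$ for $j=1,\dots,n$, where the right side is the line ${\bf v}(x)$ defined in Section 3 (this is where the $e^{\gamma x}\cc U_{]-\i,-x[}(-\gamma)$ terms show up). Together these are $n$ equations (plus the one extra equation when $m=n+1$), i.e. $m$ linear equations in the $m$ unknowns $\cc a_i(x)$.

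Finally I would invoke the same nonsingularity argument as in Proposition \ref{calA}: uniqueness of the polynomials $P_2,Q_2$ (from uniqueness of the six functions, Proposition \ref{funct}) forces the system to be Cramer, so $|{\bf W}(x)|\ne0$ (resp. $|{\bf\tilde W}(x)|\ne0$), and solving by Cramer's rule gives $\cc a_i(x)=|{\bf W}_i^{{\bf v}(x)}(x)|/|{\bf W}(x)|$ when $m=n$, and the analogous bordered-determinant expression when $m=n+1$. Substituting back into $\cc A(x,\l)=\cc a_0(x)\cc U_{[-x,0]}(\l)-\sum_i \cc a_i(x){\bf m}_i(x,\l)$ and recognizing the result as a single determinant expanded along its first row produces the two claimed formulas. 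The main obstacle I anticipate is purely bookkeeping: getting the signs and the placement of the $\cc u(-x)$ and ${\bf w}(x)$ columns right in the $m=n+1$ case, where Lemma \ref{q2} must be used carefully to see that the extra row/column bordering $\tilde{\bf W}(x)$ is precisely $[\,\cc u(-x),\ {\bf v}(x)\,]$ over $[\,{\bf w}(x),\ {\bf W}(x)\,]$; once the $m=n$ case is done, the $m=n+1$ case is a mechanical variant, just as in the proof of Proposition \ref{calA}.
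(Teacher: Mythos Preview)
Your proposal is correct and follows essentially the same route as the paper: write $\cc A$ as $\cc U_{[-x,0]}(\l)-\sum_i\cc a_i(x){\bf m}_i(x,\l)$ from the entireness constraint and the degree bounds on $P_2,Q_2$, extract the linear system $\sum_i\cc a_i(x){\bf W}_{i,j}(x)={\bf v}_j(x)$ from the holomorphy of $\cc C$ at the $-\gamma_j$, add the extra equation $\sum_i\cc a_i(x){\bf w}_i(x)=\cc u(-x)$ from Lemma~\ref{q2} when $m=n+1$, and solve by Cramer. The only cosmetic difference is that the paper does not re-derive the nonsingularity of ${\bf W}(x)$ (resp.\ ${\bf\tilde W}(x)$) here but simply quotes it from the proof of Proposition~\ref{calA}; your suggestion to argue it again via uniqueness of $P_2,Q_2$ would also work.
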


\begin{proof} Take the equation of proposition \ref{funct},
$$\cc A(x,\l)={1\over \Pi_1^m(\l+\beta_i)}\Bigl(Q_2(x,\l)e^{\l  x} + P_2(x,\l)\cc U_{[-x,0]}(\l)\Bigr). $$
 Since 
$\cc A(x,\l)$ is an entire function of $\l$ and $P_2(\l)\sim\l^m$, $\deg Q_2\leq m-1$ according to proposition \ref{poly}, then $\cc A(x,\l)$ is necessarily of the next form 
$$\cc A(x,\l)=\cc U_{[-x,0]}(\l)+\sum_1^m \cc a_i(x){\partial ^{m_i}\over (-\partial \beta)^{m_i}} \Bigl[{\cc  U_{[-x,0]}(\l)- e^{(\l +\beta)x}\cc U_{[-x,0]}(-\beta)\over \l+\beta}\Bigr]_{\beta=\beta_i}$$
$$=\cc U_{[-x,0]}(\l) - \sum_1^m \cc a_i(x){\bf m}_i(x,\l) $$

For some coefficients $\cc a_i(x)$, $i=1,\dots, m$. Thus, 
 
 $$Q_2(x,\l)=- \Pi_1^m(\l+\beta_i)\Bigl( \sum_1^m \cc a_i(x){\partial ^{m_i}\over (-\partial \beta)^{m_i}} \Bigl[{e^{\beta x}\cc U_{[-x,0]}(-\beta_i)\over \l+\beta}\Bigr]_{\beta=\beta_i}\Bigr)$$
 
$$P_2(x,\l)= \Pi_1^m(\l+\beta_i)\Bigl(1+\sum_1^m\cc a_i(x){\partial ^{m_i}\over (-\partial \beta)^{m_i}} \Bigl[{1\over \l+\beta}\Bigr]_{\beta=\beta_i}\Bigr)$$

Take now the identity  of proposition \ref{funct},
$$\cc C(x,\l)={e^{\l x}\cc \psi(\l) \over \Pi(\l+\gamma_j)}\Bigl(P_2(x,\l)e^{-\l  x}\cc U_{]-\i,-x[}(\l)-Q_2(x,\l)  \Bigr )$$
$\cc C(x,\l)$ is holomorphic on $ \{Re(\l<0\}$. Then for all $j=1,\dots, n$, 
$${\partial ^{n_j}\over (\partial \l)^{n_j}}\Bigl[{P_2(x,\l)e^{-\l x}\cc U_{]-\i,-x[}(\l)-Q_2(x,\l)\over  \Pi_1^m(\l+\beta_i)}\Bigr]_{\l=-\gamma_j}=0$$

In this last equation, when replacing  $P_2(x,\l)$ and $Q_2(x,\l)$  by their above expressions, we obtain  the next system, for $j=1,\dots, n$,

$${\partial ^{n_j}\over (\partial \l)^{n_j}} \Bigl[e^{-\l x}\cc  U_{]-\i,-x[}(\l)\Bigr]_{\l=-\gamma_j}$$
$$+ \sum_{i=1}^m \cc a_i(x){\partial ^{n_j}\over (\partial \l)^{n_j}}{\partial ^{m_i}\over (-\partial \beta)^{m_i}} \Bigl[{e^{-\l x}\cc  U_{]-\i,-x[}(\l)+ e^{\beta x}\cc U_{[-x,0]}(-\beta)\over \l+\beta}\Bigr]_{\beta=\beta_i,\l=-\gamma_j}=0$$

In other words , 

\begin{equation}\label{stm} \sum_{i=1}^m \cc a_i(x) {\bf W}_{i,j}(x)={\bf v}_j(x)\end{equation}

This  a system of $n$ equations where the variables  are the $m$ terms $\cc a_i(x)$.

When $m=n+1$, we have  the property  of lemma \ref{q2} ${Q_2(x,\l)\over \l^n} \to -\cc u(-x)$ when $\l\to-\i$ then 
$$\sum_{i=1}^m \cc a_i(x){\partial ^{m_i}\over(-\partial\beta)^{m_i}} \Bigl [e^{\beta x}\cc U_{[-x,0]}(-\beta)\Bigr]_{\beta=\beta_i}=\cc u (-x)$$

In other words 
\begin{equation}\label{stmm}  \sum_{i=1}^m \cc a_i(x){\bf w}_i(x)=\cc u (-x)\end{equation}

We have already seen in the previous proof, that $\det {\bf W}(x)\not=0$ if $m=n$ and $\det \tilde{\bf  W} (x)\not=0$  if $m=n+1$. Then the system (\ref {stm}) augmented by the equation (\ref{stmm}) if $m=n+1$, is a 
 Cramer system;  and the solution is  given as follows. If $m=n$,

$$\cc a_i(x)={|{\bf W}_i^{[{\bf v}(x)]} (x)|\over |{\bf W}(x)|}$$
and 
$$\cc A(x,\l)=\cc U_{[-x,0]}(\l)-\sum_1^m \cc a_i(x){\bf m}_i(x,\l)={ \left |\begin{array} {cc} \cc U_{[-x,0]}(\l)  & {\bf v}(x)\\ 
{\bf  m}(x,\l) &  {\bf  W}(x) \end{array}\right |  \over |{\bf W}(x)|}$$

If $m=n+1$, 
$$\cc a_i(x)={|{\bf \tilde W} (x) ]_i ^{[{\cc u}(-x), {\bf v}(x) ]}|\over |{\bf \tilde W}(x)|}$$ 

$$\cc A(x,\l)=\cc U_{[-x,0]}(\l)-\sum_1^m \cc a_i(x)e^{\l x}{\bf m}_i(x,\l)= {\left |\begin{array} {ccc} \cc U_{[-x,0]}(\l)  & \cc u(-x) & {\bf v}(x) \\ 
e^{\l x}{\bf  m}(x,\l) &   {\bf w} (x) & {\bf W}(x) \end{array}\right |  \over |{\bf \tilde W} (x)|}$$

\end{proof}

{\bf Proof of theorem \ref{AccA} and corollary \ref{AccAcor}}

Theorem \ref{AccA} follows from  ropositions  \ref{funct}, \ref{calA} and \ref{calcA}.

After that, corollary  \ref{AccAcor} becomes  obvious by the Laplace inversion of the functions ${\bf m}_i(x,\l)$ given in proposition \ref 
{inf}.

\section{Computation of $\cc C(x,\l)$ and $B(x,\l)$ and proof of theorems \ref{BccC} and corollary \ref {BccCco}}

\begin{proposition} 
$$\cc C(x,\l) =\cc\psi(\l) {\left |\begin{array} {cc} 
\cc U_{]-\i,-x[}(\l)   & -e^{\l x}{\bf n} (x,\l)\\ {\bf w}(x)  & {\bf W}(x)
\end{array} \right |\over |{\bf W}(x)|}$$
$$ B(x,\l)={\left |\begin{array} {cc}  1 &  -{\bf  i}(\l)\\  {\bf w}(x) & {\bf W}(x)
\end{array} \right | \over |{\bf W}(x)|}\quad \hbox{if}\quad  m=n$$

$$\cc C(x,\l)=\cc\psi(\l){\left |\begin{array}{ccc}\l  \cc U_{]-\i, -x[}(\l) &- \cc  U_{]-\i, -x[}(\l) & e^{\l x} {\bf n}(x,\l)\\
 {\bf  w}'(x)  & {\bf w}(x) & {\bf W}(x)\end{array}\right |\over |{\bf \tilde W} (x) | }$$
 $$
B(x,\l)={\left |\begin{array}{ccc}\l & -1 &{\bf i}(\l)\\ {\bf  w'}(x)  & {\bf w}(x) & {\bf W}(x)\end{array}\right |\over |{\bf \tilde  W}(x)  |}\quad \hbox{if}\quad  m=n+1$$

\end{proposition}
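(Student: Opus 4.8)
The plan is to mimic the proofs of Propositions \ref{calA} and \ref{calcA}: the present statement is exactly their counterpart for the pair $(B,\cc C)$, which by Proposition \ref{poly} is governed by the single pair of polynomials $(P_2,Q_2)$ already produced in the proof of Proposition \ref{calcA}.

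First I would read off the admissible form of $B$. By Proposition \ref{poly}, $B(x,\l)=P_2(x,\l)/\Pi_1^n(\l+\gamma_j)$ is a rational function; by Propositions \ref{funct} and \ref{behav} it is holomorphic on $\{\Re(\l)\geq 0\}$ with $B(x,\l)\sim\psi(\l)$ as $\Re(\l)\to+\i$, and by Theorem \ref{WH} (normalization $\psi_\i=1$) the polynomial part of $\psi$, hence of $B$, is $1$ when $m=n$ and of the form $\l+\mathrm{const}$ when $m=n+1$. Since the only poles of $B$ are the $-\gamma_j$, of order at most the multiplicity of $\gamma_j$, the functions ${\bf i}_j$ of Proposition \ref{inf} span the corresponding principal parts and
$$B(x,\l)=1-\sum_{j=1}^n c_j(x){\bf i}_j(\l)\ \ (m=n),\qquad B(x,\l)=\l+b_0(x)-\sum_{j=1}^n c_j(x){\bf i}_j(\l)\ \ (m=n+1),$$
for unknown scalars $c_j(x)$ and $b_0(x)$, in parallel with the form imposed on $A$. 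For $\cc C$ I would use the $(2,2)$ entry of the matrix identity (\ref{WHM}), which reads $B(x,iu)=\cc C(x,iu)+\phi(iu)\cc A(x,iu)$ on $i\R$, hence $\cc C=B-\phi\cc A$ by meromorphic continuation; since $\cc A$ is entire (and explicit, by Proposition \ref{calcA}) while $\phi=\psi\cc\psi$ with $\cc\psi$ holomorphic and non-vanishing at the $-\gamma_j$, substituting the forms above and simplifying via $\cc\psi(\l)\bigl(\cc U_{]-\i,-x[}(\l)+\cc U_{[-x,0]}(\l)\bigr)=1$ puts $\cc C$ into the shape $\cc C(x,\l)=\cc\psi(\l)\bigl(\cc U_{]-\i,-x[}(\l)-\sum_{j=1}^n c_j(x)e^{\l x}{\bf n}_j(x,\l)\bigr)$ with the same coefficients $c_j(x)$.

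Next I would determine the $c_j(x)$ (and $b_0(x)$) exactly as the $\cc a_i(x)$ were determined. Holomorphy of $\cc C$ on $\{\Re(\l)<0\}$ (Proposition \ref{funct}), read through $\cc C=B-\phi\cc A$, forces the principal part of $B$ at each $-\gamma_j$ to match that of $\phi\cc A$; writing this out with the explicit $\cc A$ and the $\partial^{n_j}$-bookkeeping for repeated poles gives, for $j=1,\dots,n$, one scalar linear equation, and the computation (the same elimination $\cc U_{]-\i,-x[}+\cc U_{[-x,0]}=\cc\psi^{-1}$ that produced system (\ref{stm})) shows these $n$ equations in the $c_j(x)$ have coefficient matrix ${\bf W}(x)$ and right-hand side ${\bf w}(x)$. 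When $m=n+1$ one equation is missing; as in the proof of Proposition \ref{calcA} it comes from the behaviour at $-\i$: Lemma \ref{q2}, $Q_2(x,\l)/\l^n\to-\cc u(-x)$ (equivalently Proposition \ref{ccu}), yields the extra scalar relation with coefficients $[\,\l,\, -1,\, {\bf i}(\l)\,]$ tested against $[\,{\bf w}'(x),\, {\bf w}(x),\, {\bf W}(x)\,]$, which is where ${\bf w}'(x)$ enters. Since $|{\bf W}(x)|\neq 0$ (resp. $|{\bf\tilde W}(x)|\neq 0$) was established in the proof of Proposition \ref{calcA}, these are Cramer systems; substituting their solutions back into $B$ and $\cc C$ reproduces exactly the cofactor expansions along the first rows of the bordered determinants in the statement.

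The hard part is the bookkeeping in this last step: carrying the derivatives $\partial^{n_j}$ attached to the repeated poles through the principal-part matching; in the case $m=n+1$, extracting the correct normalization equation from the $\l\to-\i$ asymptotics so that the column ${\bf w}'(x)$ --- and with it the density $\cc u$ of Proposition \ref{ccu} --- appears with the right sign and the governing matrix becomes ${\bf\tilde W}(x)$; and the determinant identities that repackage the Cramer solutions into the single bordered determinants displayed in the statement.
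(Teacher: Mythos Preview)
Your overall strategy---parametrize by partial fractions at the $-\gamma_j$, then impose analyticity to get a linear Cramer system---is the right one and is what the paper does. But two points of your execution diverge from the paper and the first is a genuine gap.

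\textbf{Where the linear system comes from.} The matrix ${\bf W}(x)$ has its rows indexed by $i$ (built from the $\beta_i$) and its columns by $j$ (built from the $\gamma_j$); likewise ${\bf w}(x)$ is indexed by $i$. The system $\sum_j \cc c_j(x){\bf W}_{i,j}(x)=-{\bf w}_i(x)$ (case $m=n$) therefore consists of $m$ equations, one per $\beta_i$. The paper obtains them not by matching principal parts of $B$ and $\phi\cc A$ at the $-\gamma_j$, but by writing $\cc C$ directly from Proposition~\ref{poly} as $\cc C=\cc\psi(P_2\cc U_{]-\i,-x[}-Q_2e^{\l x})/\Pi(\l+\gamma_j)$, expanding this in partial fractions at the $-\gamma_j$ (this simultaneously parametrizes \emph{both} $P_2$ and $Q_2$ by the same unknowns $\cc c_j$), and then imposing that $\cc A=(Q_2e^{\l x}+P_2\cc U_{[-x,0]})/\Pi(\l+\beta_i)$ be holomorphic at each $-\beta_i$. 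Your route---impose holomorphy of $\cc C$ at the $-\gamma_j$ using the already-explicit $\cc A$---produces $n$ equations indexed by $j$, not by $i$; each one reads off $c_j$ in terms of the Taylor data of $\cc A$ at $-\gamma_j$, hence in terms of the $\cc a_i$'s of Proposition~\ref{calcA}. That is workable, but it does not yield ``coefficient matrix ${\bf W}(x)$ and right-hand side ${\bf w}(x)$''; you would instead have to substitute the known $\cc a_i$ (which involve ${\bf v}(x)$, not ${\bf w}(x)$) and prove a nontrivial determinant identity to reach the displayed bordered determinants.

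\textbf{The parametrization of $\cc C$.} Parametrizing $B$ alone fixes $P_2$ but says nothing about $Q_2$, so the claim that $\cc C=B-\phi\cc A$ automatically takes the shape $\cc\psi(\cc U_{]-\i,-x[}-\sum_j c_j e^{\l x}{\bf n}_j)$ ``with the same coefficients $c_j$'' is not a consequence of your substitutions; it is an additional ansatz on $Q_2$. The paper avoids this by parametrizing $e^{-\l x}\cc C/\cc\psi$ itself---its poles are only at the $-\gamma_j$---which forces $P_2$ and $Q_2$ to share one set of unknowns $\cc c_j$ from the outset; the asymptotics $P_2\sim\l^m$ and Lemma~\ref{q2} then pin down the extra constants ($\cc c_{-1},\cc c_0,\cc d_0$) and, in the case $m=n+1$, produce the column ${\bf w}'(x)$ via the equation $\cc c_0{\bf w}_i+\sum_j\cc c_j{\bf W}_{i,j}={\bf w}'_i$.
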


\begin{proof} We take the expression of  $\cc C(x,\l)$ of proposition  \ref{poly}, 

 $$\cc C(x,\l)={\cc \psi(\l) \over \Pi_1^n(\l+\gamma_j)}\Bigl(-Q_2(x,\l)e^{\l  x} + P_2(x,\l)\cc U_{]-\i,-x[}(\l)\Bigr)$$
 
with  $\deg P_2=m$ and $\deg Q_2\leq m-1$.

 Because $\cc C(x,\l)$ is holomorphic on $\{\Re(\l)<0\}$,  the function    $e^{-\l x}\cc C(x,\l)$ is in   the next form,
  $$e^{-\l x}\cc C(x,\l)=\cc \psi(\l).\Bigl(\cc c_{-1}(x) \l e^{-\l x}  \cc U_{]-\i,-x[}(\l) + \cc c_0(x)e^{-\l x}\cc U_{]-\i,-x[}(\l) + \cc d_0(x)$$
 $$-\sum_{j=1}^n \cc c_j(x)
 {\partial^{n_j}\over (-\partial \gamma)^{n_j}}\Bigl[ {e^{-\l x} \cc U_{]-\i,-x[}(\l)-  e^{\gamma x}\cc U_{]-\i,-x[}(-\gamma)\over \l+\gamma}\Bigr]_{\gamma=\gamma_j}$$ 
 
 In other words, 
 
 $$e^{-\l x}\cc C(x,\l)$$
 $$= \cc \psi(\l)\Bigl(\cc c_{-1}(x) \l. e^{-\l x}\cc U_{]-\i,-x[}(\l) + \cc c_0(x)e^{-\l x}\cc U_{]-\i,-x[}(\l) +
  \cc d_0(x)  -  \sum_{j=1}^n \cc c_j(x){\bf n}_j(x,\l)\Bigr)$$
 for some coefficients $\cc c_j(x)$, $j=-1,0,1,\dots, n$ and a coefficient $\cc d_0(x)$.
 
Thus  $$P_2(x,\l)=\Pi(\l+\gamma_j)\Bigl(\cc c_{-1}(x) \l+\cc c_0(x)-\sum_{j=1}^n \cc c_j(x)
 {\partial^{n_j}\over (-\partial \gamma)^{n_j}}\Bigl[{ 1 \over \l+\gamma}\Bigr]_{\gamma=\gamma_j}\Bigr)  $$
And 
$$Q_2(x,\l)=\Pi(\l+\gamma_j)\Bigl(-\cc d_0(x)-\sum_{j=1}^n \cc c_j(x) {\partial^{n_j}\over (-\partial \gamma)^{n_j}}\Bigl[{e^{\gamma x}\cc U_{]-\i,-x[}(-\gamma ) \over \l+\gamma})\Bigr]_{\gamma=\gamma_j}\Bigr)$$

Take the equation  of proposition \ref {funct} $$e^{-\l x}\cc A(x,\l)={1\over \Pi(\l+\beta_i)}\Bigl(Q_2(x,\l) + P_2(x,\l)e^{-\l  x}\cc U_{[-x,0]}(\l)\Bigr).$$

The function $\cc A(x,\l)$ is  an entire function, then we obtain the equation for every $i=1,\dots, m$,

$${\partial ^{m_i}\over (\partial \l)^{m_i}}\Bigl[{Q_2(x,\l) +P_2(x,\l)e^{-\l x}\cc U_{[-x,0]}(\l)\over \Pi_1^n(\l+\gamma_j)}\Bigr]_{\l=-\beta_i}=0$$

Taking the expressions of $P_2$ and $Q_2$ upwards, we obtain the next equations,  

$${\partial ^{m_i}\over (\partial \l)^{m_i}}  \Bigl[\l \cc c_{-1} (x) e^{-\l x}\cc U_{[-x,0]}(\l)+\cc c_0(x) e^{-\l x}\cc U_{[-x,0]}(\l)- \cc d_0(x)\Bigr]_{\l=-\beta_i}  $$

$$- \sum_{j=1}^n \cc c_j(x)\Bigl[{\partial ^{m_i}\over (\partial \l)^{m_i}} {\partial^{n_j}\over (-\partial \gamma)^{n_j}}  {e^{-\l x}\cc U_{[-x,0]}(\l) +e^{\gamma x} \cc U_{]-\i,-x[}(\gamma)\over \l+\gamma}\Bigr]_{\l=-\beta_i,\gamma=\gamma_j}=0 $$

In other words, 
\begin{equation} \label{sys1} \cc c_0(x) {\bf w}_i(x)+ \sum_{j=1}^n \cc c_j(x) W_{i,j}(x)= {\partial ^{m_i}\over (\partial \l)^{m_i}}  \Bigl[ -\cc c_{-1}(x)\l e^{-\l x}\cc U_{[-x,0]}(\l) +\cc d_0(x)\Bigr]_{\l=-\beta_i} \end{equation}

We can compute the coefficients $\cc c_{-1}(x)$, $\cc c_0(x)$ and $\cc d_0(x)$ by using  the assertions of lemma \ref {poly}.
More precisely, if $m=n$, we have    $P_2(x,\l)\sim \l^n$ and  $\deg Q_2\leq n-1$, thus we obtain 

\begin{equation} \label{hqfa} \cc c_{-1}(x)=\cc d_0(x) =0\qquad \cc c_0(x)=1 \end{equation}

If $m=n+1$ then    $P_2(x,\l)\sim \l^m$ and  according to proposition \ref{q2}, $Q_2(x,\l) \sim [-\cc u(-x)] \l^n$, thus we obtain

\begin{equation} \label{hqfb}  d_0(x)=\cc u(-x) \qquad  \cc c_{-1}(x)=1\end{equation}

Then, if $m=n$,   using  the system (\ref {sys1}) and equation (\ref {hqfa}) we get the new system 

$$ \sum_{j=1}^n \cc c_j(x) {\bf W}_{i,j}(x)=-{\bf w}_i(x)\qquad \hbox{ for } i=1,\dots,n$$
Thus 

 $$\cc c_j(x)=-{ | {\bf W} ^j_{[{\bf w}(x)]}(x)|\over |{\bf W}(x)|}$$

And $$\cc C(x,\l) =\cc \psi(\l).\Bigl(\cc U_{]-\i,-x[}(\l)- e^{\l x}.\sum_{j=1}^n \cc c_j(x){\bf n}_j(x,\l)\Bigr)$$
$$=\cc\psi(\l) {\left |\begin{array} {cc} 
\cc U_{]-\i,-x[}(\l)   & -e^{\l x}{\bf n} (x,\l)\\ {\bf w}(x)  & {\bf W}(x)
\end{array} \right |\over |{\bf W}(x)|}$$

And $$B(x,\l)={P_2(x,\l)\over \Pi(\l+\gamma_j)} =1 - \sum_1^n \cc c_j(x){\partial ^{n_j}\over (-\partial \gamma)^{n_j}}\bigl[ {1\over \l+\gamma}\bigr]_{\gamma=\gamma_j}=1 - \sum_1^n \cc c_j(x){\bf i}_j(\l)$$

$$={\left |\begin{array} {cc}  1 &  -{\bf  i}(\l)\\  {\bf w}(x) & {\bf W}(x)
\end{array} \right | \over |{\bf W}(x)|}$$

Similarely if $m=n+1$, we obtain from system (\ref {sys1})  and  equation (\ref {hqfb}), the new Cramer system

$$\cc c_0(x) {\bf w}_i(x) +\sum_{j=1}^n  c_j(x) {\bf W}_{i,j}(x)={\partial ^ {m_i}\over (-\partial \beta)^{m_i}}\Bigl[\beta e^{\beta x}\cc U_{[-x,0]}(-\beta) +\cc u(-x)\Bigr]_{\beta=\beta_i}={\bf w} '_i(x)$$
Then (for $j=0,1,\dots,n$), 

$$\cc c_j(x)={\left | {\bf \tilde W} (x)^{j+1}_{[{\bf  w'}(x)]}\right | \over \left |\tilde {\bf W}(x)\right |}$$

And $$\cc C(x,\l)= \cc \psi(\l).\Bigl( \l.\cc U_{]-\i,-x[}(\l) +  \cc u(-x) e^{\l x} + \cc c_0(x)\cc U_{]-\i,-x[}(\l) 
 -  \sum_{j=1}^n \cc c_j(x)e^{\l x}{\bf n}_j(x,\l)\Bigr)$$
 
 $$= {\left |\begin{array}{ccc}\l  \cc U_{]-\i, -x[}(\l) +\cc u(-x)e^{\l x} & -\cc  U_{]-\i, -x[}(\l) &e^{\l x}{\bf n}(x,\l)\\
 {\bf  w}'(x)  & {\bf w}(x) & {\bf W}(x)\end{array}\right |\over\left  | {\bf \tilde W}(x)\right | } $$
 
 $$B(x,\l)={P_2(x,\l)\over \Pi(\l+\gamma_j)} =\l+ \cc c_0(x)-\sum_1^n \cc c_j(x){\partial ^{n_j}\over (-\partial \gamma)^{n_j}}\Bigl[{1\over \l+\gamma}\Bigr]_{\gamma=\gamma_j}$$
 $$=\l+\cc c_0(x) - \sum_{j=1}^n \cc c_j(x){\bf i}_j(\l)$$

$$={\left |\begin{array}{ccc}\l & -1 & {\bf i}(\l)\\ {\bf  w'}(x)  & {\bf w}(x) & {\bf W}(x)\end{array}\right |\over \left |{\bf \tilde  W}(x) \right |}$$
\end{proof}

{\bf Proof of theorem \ref {BccC} and corollary \ref {BccCco}.}

The proof of theorem \ref{BccC} follows from the  above proposition and proposition \ref{funct}. Corollary \ref {BccCco} follow then   with the help  the inverse Laplace transform of the functions  $n_j(x,\l)$ given  in proposition \ref{inf}.

\section{Computation of the functions $C(x,\l)$ and $\cc B(x,\l)$ and proof of theorem \ref {CccB} and corollary \ref {CccBco}}

\begin{proposition}
$$C(x,\l)=e^{-\l x}{\left |\begin{array} {cc} 0 & {\bf  i}(\l)  \\   {\bf c} & {\bf W}(x) 
\end{array} \right | \over |{\bf W}(x)|}
\qquad \cc B(x,\l)=
\cc \psi(\l){\left |\begin{array} {cc} 1& {\bf  n}(x,\l)
 \\   {\bf c} &  {\bf W}(x)  \end{array} \right | \over |{\bf W}(x)|}\quad\hbox{if}\quad m=n $$

$$C(x,\l)= e^{-\l x}
{\left |\begin{array} {ccc} 0 &   1 & {\bf  i}(\l) \\   {\bf c}& {\bf w}(x) &{\bf  W}(x)   
\end{array} \right | 
\over | {\bf \tilde  W} (x)|} $$

$$\cc B(x,\l)=\cc \psi(\l){\left |\begin{array} {ccc} 0 & e^{-\l x} \cc U_{]-\i,-x[}(\l) &{\bf  n}(x,\l) \\   {\bf c} & {\bf w}(x) & {\bf W} (x)   
\end{array} \right | \over |{\bf \tilde  W} (x)|}\quad\hbox{if}\quad m=n+1 $$

\end{proposition}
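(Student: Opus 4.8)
The plan is to repeat, for the pair $(C,\cc B)$, the argument just used for $(\cc C,B)$: by Proposition \ref{poly} both functions are assembled from the two polynomials $P_1(x,\cdot)$ and $Q_1(x,\cdot)$, so everything reduces to pinning down these two polynomials.

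First I would start from the representations of Proposition \ref{poly},
$$C(x,\l)=-\,{e^{-\l x}Q_1(x,\l)\over\Pi_1^n(\l+\gamma_j)},\qquad \cc B(x,\l)=\cc\psi(\l)\,{P_1(x,\l)-Q_1(x,\l)e^{-\l x}\cc U_{]-\i,-x[}(\l)\over\Pi_1^n(\l+\gamma_j)}$$
(the sign being the one produced by the computation $M=R^{-1}N$), with $\deg P_1=n$ and $\deg Q_1\le m-1$. Since $\cc B$ is holomorphic on $\{\Re\l<0\}$, where $\cc\psi$ does not vanish, and $\cc B(x,\l)\sim\cc\psi(\l)$ at $-\i$, the quotient $\cc B/\cc\psi$ must have the form $1-\sum_{j=1}^n c_j(x){\bf n}_j(x,\l)$ when $m=n$, and $1+c_0(x)e^{-\l x}\cc U_{]-\i,-x[}(\l)-\sum_{j=1}^n c_j(x){\bf n}_j(x,\l)$ when $m=n+1$, the functions ${\bf n}_j$ of Proposition \ref{inf} being exactly the left--holomorphic building blocks available. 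Splitting each ${\bf n}_j$ into its $e^{-\l x}\cc U_{]-\i,-x[}(\l)$--part and its $e^{\gamma x}\cc U_{]-\i,-x[}(-\gamma)$--part and matching with the two terms of the numerator above, one reads off $Q_1$ and $P_1$ as explicit combinations of the $c_j(x)$, the constant term of $P_1/\Pi_1^n(\l+\gamma_j)$ being equal to $1$ (forced by $\cc B\sim\cc\psi$); in particular $e^{\l x}C(x,\l)=-Q_1(x,\l)/\Pi_1^n(\l+\gamma_j)=-\sum_j c_j(x){\bf i}_j(\l)$ (plus a $c_0$--term when $m=n+1$) already has the announced shape.

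Next I would substitute these $P_1$ and $Q_1$ into the identity $A(x,\l)=\bigl(P_1(x,\l)+Q_1(x,\l)e^{-\l x}\cc U_{[-x,0]}(\l)\bigr)/\Pi_1^m(\l+\beta_i)$ of Proposition \ref{funct}. Entireness of $A$ forces the numerator to vanish to order $m_i+1$ at each $-\beta_i$; since the $-\beta_i$ are roots and the $-\gamma_j$ poles of $\phi$, $\Pi_1^n(-\beta_i+\gamma_j)\ne0$, so these conditions --- after applying $\partial_\beta^{m_i}$ --- make the mixed derivatives $\partial_\beta^{m_i}\partial_\gamma^{n_j}$ of the kernel ${e^{\beta x}\cc U_{[-x,0]}(-\beta)+e^{\gamma x}\cc U_{]-\i,-x[}(-\gamma)\over\beta-\gamma}$, i.e.\ the entries ${\bf W}_{i,j}(x)$, appear, and the system reduces to $\sum_{j=1}^n{\bf W}_{i,j}(x)\,c_j(x)=1_{m_i=0}$ ($i=1,\dots,n$) when $m=n$, resp.\ $c_0(x){\bf w}_i(x)+\sum_{j=1}^n{\bf W}_{i,j}(x)\,c_j(x)=1_{m_i=0}$ ($i=1,\dots,n+1$), i.e.\ ${\bf \tilde W}(x)\bigl(c_0(x),\dots,c_n(x)\bigr)^t={\bf c}$, when $m=n+1$ (the first term being absent when $m=n$). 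By the uniqueness part of Proposition \ref{funct} this system has a unique solution, hence $|{\bf W}(x)|\ne0$, resp.\ $|{\bf \tilde W}(x)|\ne0$ (already established in the computation of $A$ and $\cc A$), so it is a Cramer system; substituting its solution back into the expressions for $C$ and for $\cc B=\cc\psi(1-\sum_j c_j{\bf n}_j)$ (resp.\ its $m=n+1$ analogue) obtained in the previous step, and recombining the resulting sums of cofactors into bordered determinants exactly as in the computations of $A,\cc A$ and of $\cc C,B$, produces the four displayed formulas.

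I expect the one genuinely delicate point to be the middle step: checking that, once $P_1$ and $Q_1$ have been written in terms of the $c_j(x)$, the entireness conditions at the $-\beta_i$ collapse precisely onto the matrix ${\bf W}(x)$ --- in the right orientation and with the right signs --- with right--hand side ${\bf c}$, and onto the extra row ${\bf w}(x)$ when $m=n+1$; in particular that the two $\cc U_{]-\i,-x[}$--contributions (argument $\l$ and argument $-\gamma$) fuse with the $\cc U_{[-x,0]}(-\beta)$--term into the single kernel defining ${\bf W}$, and that $\cc B\sim\cc\psi$ correctly pins the constant term to $1$. Everything else is the same determinant bookkeeping as in the preceding two sections.
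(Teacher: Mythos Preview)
Your proposal is correct and follows essentially the same route as the paper: write $\cc B/\cc\psi$ as $1+\cc b_0(x)e^{-\l x}\cc U_{]-\i,-x[}(\l)-\sum_j\cc b_j(x){\bf n}_j(x,\l)$ (with $\cc b_0=0$ forced by $\deg Q_1\le n-1$ when $m=n$), read off $P_1,Q_1$ from this ansatz, impose entireness of $A$ at each $-\beta_i$ to obtain the linear system $\cc b_0(x){\bf w}_i(x)+\sum_j\cc b_j(x){\bf W}_{i,j}(x)=1_{m_i=0}$, solve by Cramer (using the nonvanishing of $|{\bf W}(x)|$ resp.\ $|{\bf\tilde W}(x)|$ already established), and repackage the cofactor sums as bordered determinants. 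Your parenthetical remark about the sign of $C$ is apt: the paper's own proof uses $C(x,\l)=-e^{-\l x}Q_1/\Pi_1^n(\l+\gamma_j)$, and the formula for $\cc B$ in the $m=n+1$ case that this argument actually produces has a $1$ (not a $0$) in the upper-left entry, consistent with Theorem~\ref{CccB}.
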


\begin{proof}

According to proposition \ref{poly}, we have the identity 
$$\cc B(x,\l)={\cc \psi(\l)\over \Pi_1^n(\l+\gamma_j)}.\Bigl(P_1(x,\l) - Q_1(x,\l)e^{-\l x}\cc U_{]-\i,-x[}(\l)\Bigr)$$

with $$\deg P_1(\l)\sim\l^n \quad \hbox{and} \quad \deg Q_1\leq m-1$$
 
Since $\cc B(x,\l)$ is holomorphic on $\{\Re(\l)<0\}$, we can write 
 $\cc B(x,\l)$ in  the following form,
  $$\cc B(x,\l)=\cc \psi(\l)\Biggl (1+\cc b_0(x)e^{-\l x}\cc U_{]-\i,-x[}(\l)+\sum_{j=1}^n \cc b_j(x){\partial^{n_j}\over (-\partial \gamma)^{n_j}}\Bigl[ {e^{\gamma x}\cc U_{]-\i,-x[}(-\gamma)- e^{-\l x}\cc U_{]-\i,-x[}(\l)\over \l+\gamma}\Bigr]_{\gamma=\gamma_j}\Biggr)$$
 
 In other words, 
 
 $$\cc B(x,\l)= \cc \psi(\l).\Bigl(1+\cc b_0(x)e^{-\l x}\cc U_{]-\i,-x[}(\l) -\sum_{j=1}^n \cc b_j(x){\bf n}_j(x,\l)\Bigr)$$
 for some coefficients $\cc b_j(x)$, $j=-1,0,1,\dots, n$ and a coefficient $\cc d_0(x)$.

Thus the polynomial $P_1$ and $Q_1$ are given by the next expressions

$$P_1(x,\l)=\Pi_1^n(\l+\gamma_j)\Biggl(1+\sum_{j=1}^n \cc b_j(x){\partial ^{n_j}\over (-\partial \gamma)^{n_j}}\Bigl[ {e^{\gamma x}\cc U_{]-\i,-x[}(-\gamma)\over \l+\gamma})\Bigr]_{\gamma=\gamma_j}\Biggr)$$

$$Q_1(x,\l)=\Pi_1^n(\l+\gamma_j)\Biggl(-\cc b_0(x)+\sum_{j=1}^n \cc b_j(x){\partial ^{n_j}\over (-\partial \gamma)^{n_j}}\Bigl[ {1 \over \l+\gamma}\Bigr]_{\gamma=\gamma_j}\Biggr)$$

On the other hand,  according to proposition \ref{funct}, we have 

$$A(x,\l) ={1\over \Pi_1^m(\l+\beta_i)}
\Bigl(P_1(x,\l)+ Q_1(x,\l)e^{-\l x}\cc U_{[-x,0]}(\l)\Bigr)$$

Since $A(x,\l)$ is an entire function of $\l$, then it is holomorphic at  $\l=-\beta_i$; Thus we have the equations for $i=1,\dots, m$.
 $${\partial ^{m_i}\over (\partial\l)^{m_i}}\Bigl[{P_1(x,\l)+ Q_1(x,\l)e^{-\l x}\cc U_{[-x,0]}(\l)\over  \Pi_1^n(\l+\gamma_j)}\Bigl]_{\l=-\beta_i}=0,$$ 
that is,  
$$\cc b_0(x){\partial ^{m_i}\over (\partial \l)^{m_i} }\Bigl[e^{-\l x} \cc U_{[-x,0]}(\l)\Bigr]_{\l=-\beta_i}$$
$$-\sum_{j=1}^n \cc b_j(x){\partial ^{m_i}\partial ^{n_j}\over (\partial\l)^{m_i} (-\partial\gamma)^{n_j}}\Bigl[{e^{\gamma x}\cc U_{]-\i,-x[}(-\gamma)+e^{-\l x}\cc U_{[-x,0]}(\l) \over \lambda+\gamma }\Bigr]_{\gamma=\gamma_j,\l=-\beta_i}=1_{m_i=0}$$

In other words, 

\begin{equation}\label {stmbcc}
\cc b_0(x){\bf w}_i(x) + \sum_{j=1}^n \cc b_j(x){\bf W}_{i,j}(x)=1_{m_i=0}
\end{equation}

Moreover, if $m=n$, since $\deg Q_1(x,\l)\leq n-1$, we have

\begin{equation}\label {stmbcca} \cc b_0(x)=0\end{equation}

We have already seen that the determinants $|{\bf W}(x)|$ ( if $m=n$) and $|{\bf \tilde W}(x)|$  (if $m=n+1$), then the system (\ref{stmbcc}) augmented with 
(\ref{stmbcca}) if $m=n$, form a Cramer system whose  solutions are  given by the next expressions,

If $m=n$, 

$$\cc  b_0(x)=0\qquad \cc b_j(x)={|{\bf W}_j^{\bf c}(x)|\over |{\bf W}(x)|}\quad \hbox{ for }j=1,\dots,n$$
Thus 

$$\cc B(x,\l)=\cc \psi(\l)(1-\sum_1^n  \cc b_j(x){\bf n}_j(x,\l))=
\cc \psi(\l){\left |\begin{array} {cc} 1 & {\bf  n}(x,\l)
 \\  {\bf  c} &  {\bf W}(x)  \end{array} \right | \over |{\bf W}(x)|}$$

and the expression of $C(x,\l)$ given in proposition \ref{poly} gives

$$C(x,\l)=-{Q_1(x,\l) e^{-\l x}\over \Pi_1^n(\l+\gamma_j)}=e^{-\l x}(\sum_1^n  {-\cc b_j(x)\over \l+\gamma_j})=e^{-\l x}{\left |\begin{array} {cc} 0 & {\bf  i}(\l)  \\   {\bf c} & {\bf W}(x) 
\end{array} \right | \over |{\bf W} (x)|}$$

If $m=n+1$, we have for $j=0,\dots, n$, 
$$\cc b_j(x)={\left | {\bf \tilde W}_{j+1}^{\bf c}(x)\right |\over \left |{\bf \tilde W}(x) \right |}$$

And $$\cc B(x,\l)=\cc \psi(\l)\Bigl (1-\cc b_0(x)e^{-\l x}\cc U_{]-\i,-x[}(\l) -\sum_1^n \cc b_j(x){\bf n}_j(x,\l)\Bigr )$$
$$=
\cc \psi(\l){\left |\begin{array} {ccc} 1  &  e^{-\l x}\cc U_{]-\i,-x[}(\l)  & {\bf  n}(x,\l) \\   {\bf c} & {\bf w}(x) &{\bf W}(x)   
\end{array} \right | \over \left |{\bf \tilde W}(x) \right |}$$

and   then,  

$$C(x,\l)=-{Q_1(x,\l) e^{-\l x}\over \Pi_1^n(\l+\gamma_j)}=e^{-\l x}(\cc b_0(x) - \sum_1^n {\cc b_j(x)\over \l+\gamma_j})$$
$$= e^{-\l x}{\left |\begin{array} {ccc}   0 & 1 & {\bf i}(\l)  \\   {\bf c} &  {\bf w}(x) &{\bf W}(x)   
\end{array} \right | \over \left |{\bf \tilde W}(x) \right |}$$
\end{proof}

{\bf Proof of corollary \ref{CccBco}}. This corollary becomes obvious when using propositions \ref{funct} and the previous one and when using the Laplace thransform inversion of ${\bf n}_j(x,\l)$ and ${\bf i}_j(\l)$ given in proposition \ref{inf}

\section{Proof of theorems \ref{Ux} and \ref{Tab}}

It has been proven in \cite{F10} (propositions 6.3. and 6.4) that the distributions of the triple $(X,I,S)$ at time $U_x$ and at time $T_a^b$ are also characterized by the $6$ functions $A,\cc A, B,\cc B, C,\cc C$. Then, we  translate these two propositions with  reference to the explicit forms of the functions obtained previously.

\bigskip

{\it Acknowledgment}. this work has been supported by the ANR, on the project  ANR-09-BLAN-0084-01.
\bibliographystyle{amsplain}

\end{document}